\DeclareMathOperator{\ad}{ad}
\DeclareFontFamily{U}{mathx}{}
\DeclareFontShape{U}{mathx}{m}{n}{<-> mathx10}{}
\DeclareSymbolFont{mathx}{U}{mathx}{m}{n}
\DeclareMathAccent{\widehat}{0}{mathx}{"70}
\DeclareMathAccent{\widecheck}{0}{mathx}{"71}
\theoremstyle{plain}
\newtheorem{theorem}{Theorem}[section]
\newtheorem{corollary}[theorem]{Corollary}
\newtheorem{lemma}[theorem]{Lemma}
\newtheorem{proposition}[theorem]{Proposition}
\newtheorem{problem}[theorem]{Problem}
\theoremstyle{remark}
\newtheorem{remark}[theorem]{Remark}
\numberwithin{equation}{section}
\definecolor{deepgreen}{cmyk}{1,0,1,0.5}
\newcommand{\eps}{\varepsilon}
\newcommand{\norm}[2]{\left\Vert #1\right\Vert_{#2}}
\newcommand{\ba}{\breve{a}}
\newcommand{\bb}{\breve{b}}
\newcommand{\bbC}{\mathbb{C}}
\newcommand{\bbR}{\mathbb{R}}
\newcommand{\dbar}{\overline{\partial}}
\newcommand{\zbar}{\overline{z}}
\newcommand{\mlc}{m^{\text{LC}}}
\newcommand{\mlcT}{\widetilde{m}^{\text{LC}}}
\newcommand{\SigmaLC}{\Sigma^{\textrm{LC}}}
\newcommand{\SigmaLCT}{\widetilde{\Sigma}^{\textrm{LC}}}
\newcommand{\twomat}[4]
{
\left(
	\begin{array}{cc}
		{#1}	&	{#2}	\\[10pt]
		{#3}	&	{#4}
		\end{array}
\right)
}
\newcommand{\diagmat}[2]
{
\left(
	\begin{array}{cc}
		{#1}	&	0	\\
		0		&	{#2}
		\end{array}
\right)
}
\newcommand{\offdiagmat}[2]
{
\left(
	\begin{array}{cc}
		0			&		{#1} 	\\
		{#2}		&		0
		\end{array}
\right)
}
\begin{document}
	\title{Long time asymptotics of a perturbed modified KdV
equation }

\author{Gong Chen}
	\author{Jiaqi Liu}
	\author{Yuanhong Tian}
\address[Chen]{School of mathematics, Georgia institute of technology. 686 Cherry Street
Atlanta, GA }
\email{gc@math.gatech.edu}
\address[Liu]{School of mathematics, University of Chinese Academy of Sciences. No.19 Yuquan Road, Beijing China }
\email{jqliu@ucas.ac.cn}
\address[Tian]{School of mathematics, University of Chinese Academy of Sciences. No.19 Yuquan Road, Beijing China}
\email{1838393768@qq.com}	

\begin{abstract}
We derive full asymptotics of the modified KdV equation (mKdV) with a higher-order perturbative term. We make use of the perturbative theory of infinite-dimensional integrable systems developed by P.
Deift and X. Zhou \cite{DZ-2}, and some new and simpler proofs of certain $L^\infty$ bounds and $L^p$ a priori
estimates developed recently in \cite{CLT}. We show that the perturbed
equation exhibits the same long-time behavior as the completely integrable mKdV.
  
\end{abstract}

\maketitle

\tableofcontents

\section{Introduction}
It is well-known that wave motions in complex media are often influenced by additional energy introduced through related processes. Accordingly, the mathematical models describing such motion are frequently formulated as evolution equations incorporating source-like terms or external forcing. We refer to  \cite{Engel,EnKh} for details on physical backgrounds and derivations for the KdV models as concrete examples. 
In this paper we are interested in the long time asymptotics
of perturbed (forced) modified KdV equations
\begin{equation}
u_t + u_{xxx} - 6u^2u_x= \eps F(u),\qquad (x, t)\in (\bbR, \bbR^+).
\end{equation}
We recall the standard defocussing modified KdV equation (mKdV) 
\begin{equation}
\label{eq:mKdV}
    u_t+u_{xxx}-6u^2u_x=0.
\end{equation}
 Important conserved quantities of mKdV are the mass $M$, energy $H$, and momentum $P$:
 \begin{align}
     \label{conserve}
     M=\int_\bbR u^2 dx, \quad H=\int_{\mathbb{R}} \left|\partial_x u\right|^2+|u|^4 d x, \quad P=\int_\bbR u dx.
 \end{align}
For concreteness and simplicity, in this article, we consider the model with an external force is given by a power nonlinearity of  the dependent variable:
\begin{equation}
\label{pmKdV}
u_t + u_{xxx} - 6u^2u_x= \eps u^{\ell},\qquad (x, t)\in (\bbR, \bbR^+).
\end{equation}
Here $0<\varepsilon\ll 1$, $\ell$ is some positive constant. 
 It is easy to verify that the $u^\ell$ term on the RHS of \eqref{pmKdV} is a dissipative term if $\ell$ is an odd number. In this paper we will not emphasize whether the perturbative term is dissipative or not. It will become clear in our proof that the long time asymptotics of \eqref{pmKdV} do not rely on dissipation.

\subsection{Background}
In the seminal paper by P.Deift and X.Zhou \cite{DZ-2}, the authors investigated the perturbation of the cubic nonlinear Schr\"odinger equation with higher order nonlinearity:
\begin{equation}\label{eq:pnls}
    iq_t+q_{xx}-2|q|^2q-\eps|q|^\ell q=0
\end{equation}
with $0<\epsilon\ll 1$, $\ell>7/2$. By establishing the existence of wave operator (\cite[(1.17)]{DZ-2}) 
\begin{equation}
   W^{+}(q) \equiv \lim _{t \rightarrow \infty} U_{-t}^{\mathrm{NLS}} \circ U_t^{\varepsilon}(q) 
\end{equation}where $U_t^{\varepsilon}(q)$ is the solution operator of \eqref{eq:pnls} with initial data $q$ and $U_{t}^{\mathrm{NLS}}$ is the solution flow for the free cubic NLS (i.e., $\epsilon=0$ in \eqref{eq:pnls}, 
the authors show that for $0<\varepsilon\ll 1$ whose size depends on the $H^{1,1}$ Sobolev norm of the initial data, solutions to perturbed NLS equation behave as $t\to \infty$ like solutions of the $1$-d cubic NLS equation which is completely integrable. Their work is the first major step towards building a rigorous perturbation theory for infinite-dimensional integrable systems. \cite{DZ-2} relies partly on the $L^p$ \textit{a priori} estimate developed in \cite{DZ-1}. Recently, in \cite{CLT}, we revisit the perturbative method of \cite{DZ-2} to calculate the long-time asymptotics of \cite[Equation (1.4)]{CLT} and obtain several new \textit{a priori} estimates that are more adaptable to other completely integrable equations.

On the other hand, in \cite{CL} we obtained the long time asymptotics of Equation \eqref{eq:mKdV} through the $\overline{\partial}$ version of nonlinear steepest descent method first developed in \cite{DZ93}. Before describing our result, we give a review of the direct and inverse scatting formalism for the mKdV.
\subsection{The inverse scattering for the mKdV}
To describe our approach, we recall that \eqref{eq:mKdV}
is the compatibility condition for the following commutator:
\begin{align}
\label{eq: compat}
    \left[\partial_x-\mathcal{U}, \partial_t-\mathcal{W}\right]=0,
\end{align}
where
\begin{align*}
\mathcal{U}& =-i z \sigma+\left(\begin{array}{cc}
0 & u \\
{u} & 0
\end{array}\right), \\
\mathcal{W} & =(-4i z^3-2izu^2) \sigma +\left(\begin{array}{cc}
0 & 4z^2u+2izu_x-u_{xx}+2u^3 \\
4z^2u-2izu_x-u_{xx}+2u^3 & 0
\end{array}\right)
\end{align*}
and $\sigma$ is the third Pauli matrix
$$ \sigma= \diagmat{1}{-1}.$$
Thus \eqref{eq:mKdV} generates an isospectral flow for the problem
\begin{equation}
\label{L}
\frac{d}{dx} \Psi =- iz \sigma \Psi + U(x) \Psi
\end{equation}
where
$$ 
U(x) = \offdiagmat{u(x)}{{u(x)}}.$$
This is a standard AKNS system. If $u \in L^1(\bbR) $, Equation \eqref{L} admits bounded 
solutions for $z \in \mathbb{R}$.   There exist unique solutions $\Psi^\pm$ of \eqref{L} obeying the  following space asymptotic conditions
$$\lim_{x \to \pm \infty} \Psi^\pm(x,z) e^{-ix z \sigma} = \diagmat{1}{1},$$
and there is a matrix $S(z)$, the scattering matrix, with 
 $\Psi^+(x,z)=\Psi^-(x,z) S(z)$.
The matrix $T(z)$ takes the form
\begin{equation} \label{matrixT}
 S(z) = \twomat{a(z)}{\bb(z)}{b(z)}{\ba(z)} 
 \end{equation}
and  the determinant relation gives
$$ a(z)\ba(z) - b(z)\bb(z) = 1.$$
Combining this with the symmetry relations 
\begin{align} \label{symmetry}
\ba(z)=\overline{a( \zbar )}, \quad \bb(z) = \overline{ b(z)}
\end{align}
we arrive at
$$|a(z)|^2-|b(z)|^2=1$$
and conclude that $a(z)$ is zero-free. By the standard inverse scattering theory, we formulate the reflection coefficient:
\begin{equation}
\label{reflection}
r(z)=\bb(z)/a(z), \quad z\in\bbR.
\end{equation}
In \cite{Zhou98}, it is shown that for $k, j$ integers with $k\geq 0$, $j\geq 1$, the direct scattering map $\mathcal{R}$ maps $H^{k,j}(\bbR)$ onto $H^{j,k}_1=H^{j,k}(\bbR)\cap \lbrace r: \norm{r}{L^\infty} <1\rbrace$ where $H^{j,k}$ norm is given by:
\begin{equation}
\label{sp: weighted}
    \norm{f}{H^{i,j}(\bbR)}
= \left( \norm{(1+|x|^j)f}{2}^2 + \norm{f^{(i)}}{2}^2 \right)^{1/2}
\end{equation}
and the map 
$\mathcal{R}: u\mapsto r$ is Lipschitz continuous. Since we are dealing with the defocussing mKdV, only the reflection coefficient $r$ is needed for the reconstruction of the solution. We then have the following result as a corollary:
\begin{theorem}
\label{thm:biject}
    Given $u_0$ the initial data of Equation \eqref{eq:m3}, then we have the direct scattering map
    \begin{equation}
        \mathcal{R}: H^{2,1}(\bbR)\ni u_0\mapsto r(z)\in H^{1,2}(\bbR).
    \end{equation}
    Moreover, $ \mathcal{R}(u(t))$ evolves linearly
    \begin{equation}
    \label{time-1}
        \mathcal{R}(u(t))=\mathcal{R}(u(0)) e^{8i t z^3}=r(z) e^{8i t z^3}.
    \end{equation}
    And inverse scattering map
     \begin{equation}
        \mathcal{R}^{-1}:  H^{1,2}(\bbR)\ni e^{8i t z^3}r(z)\mapsto u(x,t)\in H^{2,1}(\bbR).
    \end{equation}
   In particular, both $\mathcal{R}$ and $\mathcal{R}^{-1}$ are Lipschitz continuous. 
\end{theorem}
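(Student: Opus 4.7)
The statement is labeled a corollary because the bijectivity and Lipschitz continuity of the scattering map are already packaged in the cited theorem of Zhou from \cite{Zhou98}. My plan is therefore to (i) simply invoke that result with the right indices to get the mapping properties at a fixed time, and then (ii) carry out the standard Lax pair computation to identify the time evolution of the reflection coefficient as multiplication by $e^{8itz^{3}}$.

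For step (i), I specialize Zhou's statement with $k=2$ and $j=1$. This immediately yields that $\mathcal{R}$ is a bijection from $H^{2,1}(\bbR)$ onto $H^{1,2}_{1}(\bbR)\subset H^{1,2}(\bbR)$ and that both $\mathcal{R}$ and $\mathcal{R}^{-1}$ are Lipschitz continuous with respect to the indicated norms. Thus the only genuine content to verify beyond a direct quotation is the time-evolution formula \eqref{time-1}, together with the fact that the mKdV flow preserves $H^{2,1}(\bbR)$ (a classical well-posedness statement that can be taken from the literature, or derived from the scattering bijection itself).

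For step (ii), I use the Lax pair \eqref{eq: compat}. Let $\Psi^{\pm}(x,t,z)$ be the Jost solutions of \eqref{L} with $u=u(\cdot,t)$, normalized by the space asymptotics above \eqref{matrixT}. Because $u(\cdot,t)\to 0$ as $|x|\to\infty$, the matrix $\mathcal{W}$ tends to the diagonal matrix $-4iz^{3}\sigma$ at spatial infinity. Defining $\widetilde\Psi^{\pm}(x,t,z):=\Psi^{\pm}(x,t,z)e^{4itz^{3}\sigma}$, one checks that $\widetilde\Psi^{\pm}$ satisfies both Lax equations and retains the correct normalization as $x\to\pm\infty$. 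Writing $\widetilde\Psi^{+}=\widetilde\Psi^{-}\widetilde S(z,t)$ and substituting into the $t$-equation gives $\partial_{t}\widetilde S=0$, which is equivalent to
\begin{equation*}
\partial_{t}S(z,t) \;=\; \bigl[\,-4iz^{3}\sigma,\;S(z,t)\,\bigr].
\end{equation*}
The diagonal entries $a(z,t),\ba(z,t)$ are therefore independent of $t$, while the off-diagonal entries satisfy $\partial_{t}b=8iz^{3}b$ and $\partial_{t}\bb=-8iz^{3}\bb$. Taking the ratio $r=\bb/a$ as in \eqref{reflection} and using that $a$ is constant in $t$, one obtains $r(z,t)=r(z,0)e^{8itz^{3}}$, which is exactly \eqref{time-1}.

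I do not expect a serious obstacle in this argument: step (i) is a direct citation, and step (ii) is the textbook IST evolution calculation. The only mildly delicate point is ensuring that the time-differentiation of the Jost solutions is justified in the $H^{2,1}$ class, which is handled by combining the decay and smoothness afforded by $u_{0}\in H^{2,1}$ with the standard Volterra estimates used to construct $\Psi^{\pm}$; alternatively, one can derive the evolution on a dense subclass (e.g.\ Schwartz data) and then pass to the limit using the Lipschitz continuity of $\mathcal{R}$ already supplied by step (i).
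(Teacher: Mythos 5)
Your proposal matches the paper's treatment exactly: the paper offers no independent proof of this theorem, stating it as a corollary of Zhou's $L^2$-Sobolev bijectivity result \cite{Zhou98} specialized to $k=2$, $j=1$, with the time evolution \eqref{time-1} being the standard isospectral IST computation that you carry out explicitly. One bookkeeping slip worth fixing: from the ODEs you derive ($a$ constant, $\partial_t \bb=-8iz^3\bb$), the ratio $r=\bb/a$ of \eqref{reflection} would evolve as $r(z,t)=r(z,0)e^{-8itz^3}$ rather than $e^{+8itz^3}$, so either the sign in the conjugation $\widetilde\Psi^{\pm}=\Psi^{\pm}e^{4itz^3\sigma}$ or the direction of the commutator needs to be rechecked against the paper's conventions for $\sigma$, $\mathcal{W}$, and the orientation $\Psi^+=\Psi^-S$ before the signs all line up with \eqref{time-1}.
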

The long-time behavior of the solution to the mKdV equation is obtained through a sequence of transformations of the following RHP:
\begin{problem}
\label{prob:mKdV.RH0}
Given $r(z) \in H^{1,2}(\bbR)$, 
find a $2\times 2$ matrix-valued function $m(z;x,t)$ on $\bbC \setminus \bbR$ with the following properties:
\begin{enumerate}
\item		$m(z;x,t) \to I$ as $|z| \to \infty$.
			\medskip
			
\item		$m(z;,x,t)$ is analytic for $z \in \mathbb{C} \setminus \bbR$ with continuous boundary values
			$$m_\pm(z;x,t) = \lim_{\varepsilon \to 0} m(z\pm i\varepsilon;x,t).$$
			\medskip
			
\item		
The jump relation $m_+(z;x,t) = m_	-(z;x,t) e^{-i\theta \ad \sigma} v(z)$ holds, where
			\begin{equation}
			\label{mKdV.V}
			e^{-i\theta \ad \sigma} v(z)=\twomat{1-|r(z)|^2}
			{-\overline{r(z)}e^{-2i\theta}}{r(z)e^{2i\theta}}
   {1}
			\end{equation}
and the real phase function $\theta$ is given by
\begin{equation}
\label{mKdV.phase}
\theta(z;x,t) = xz+4tz^3
\end{equation}
with stationary points
\begin{equation}
    \label{stationary pt}
   \pm  z_0=\sqrt{ \dfrac{-x}{12t}} .
    \end{equation}
    \end{enumerate}
\end{problem}
From the solution of Problem \ref{prob:mKdV.RH0}, we recover
\begin{align}
\label{mKdV.q}
u(x,t) &= \lim_{z \to \infty} 2i  z m_{21}(x,t,z)
\end{align}
where the limit is taken in $\bbC\setminus \bbR$ along any direction not tangent to $\bbR$.  
\subsubsection{The Beals-Coifman solution} For $z\in \mathbb{R}$, introduce
\begin{equation}
    w_\theta=\left(w^-_\theta, w^+_\theta\right) = \left(\twomat{0}{-\overline{r(z)}e^{-2i\theta}}{0}{0}, \twomat{0}{0}{r(z)e^{2i\theta}}{0}\right)
    \end{equation}
    corresponding to the factorization of the jump matrix:
    \begin{equation}
        v_\theta=\left(v^{-}_\theta\right)^{-1} v^{+}_\theta= \twomat{1}{\overline{r(z)}e^{-2i\theta}}{0}{1}^{-1} \twomat{1}{0}{{r(z)}e^{2i\theta}}{1}.
    \end{equation}
Denote the associated singular integral operator,
\begin{equation}
    C_{v_\theta} h=C^{+}\left(h w^{-}_\theta\right)+C^{-}\left(h w^{+}_\theta\right)
\end{equation}
where  $C^\pm$ is the Cauchy projection:
\begin{equation}
(C^\pm f)(z)= \lim_{z\to \Sigma_\pm}\dfrac{1}{2\pi i} \int_{\Sigma} \dfrac{f(s)}{s-z}ds
\end{equation}
and  $+(-)$ denotes taking limit from the positive (negative) side of the oriented contour.
 Let $\mu\in I+L^2(\bbR)$ solves the equation
\begin{equation}
\label{eq:mu}
     \left(1-C_{v_\theta} \right) \mu=I
\end{equation}
with the resolvent operator satisfying
\begin{equation}
    \norm{(1-C_{v_\theta})^{-1}}{L^2(dz)}\leq (1-\rho)^{-1}
\end{equation}
where $\rho:=\norm{r}{L^\infty(dz)}<1$.
Then a direct calculation shows that
\begin{equation}
    m_{ \pm}=I+C^{ \pm}\left(\mu\left(w^{+}_\theta+w^{-}_\theta\right)\right)=\mu v^\pm_\theta
\end{equation}
is the unique solution to Problem \ref{prob:mKdV.RH0}. Set
\begin{equation}\label{eq:bfU}
    \mathbf{U}(x,t):=\int_{\mathbb{R}} \mu(x, s)\left(w_\theta^{+}(s)+w_\theta^{-}(s)\right) d s.
\end{equation}
Then simple calculation recover the potential
\begin{equation}
  U(x,t)=  \twomat{0}{u(x,t)}{{u(x,t)}}{0}=\frac{1}{2 \pi i} \mathrm{ad} \sigma(\mathbf{U}).
\end{equation}

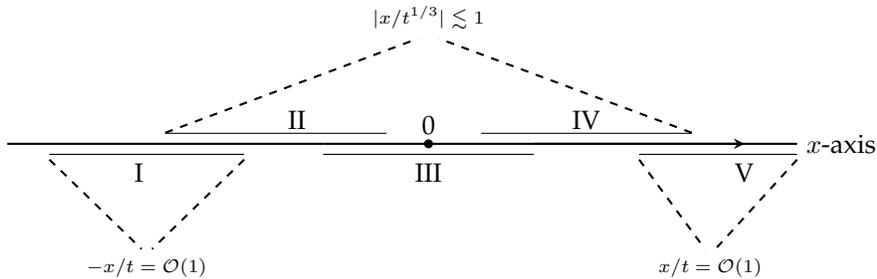
\begin{figure}[h]
\caption{Five Regions}
\vskip 15pt

\begin{tikzpicture}[scale=0.7]
\draw[fill] (0,0) circle[radius=0.075];
\draw[thick] 	(-8,0) -- (-2,0);
\draw[thick]		(-2,0) -- (0,0);
\draw[->,thick,>=stealth]	(0,0) -- (6,0);
\draw[thick]		(2,0) -- (7,0);
\draw		(-7.2, -0.2) -- (-3.5, -0.2); 
\node[below] at (-5.5, -0.2) {I};
\draw		(-5, 0.2) -- (-0.8, 0.2); 
\node[above] at (-2.5, 0.1) {II};

\draw	(-2, -0.2) -- (2, -0.2); 
\node[below] at (0, -.2){III};
\node[above] at (0,.0){0};

\draw	(1, 0.2) -- (5, 0.2); 
\node[above] at (3,0.1) {IV};

\node[right] at (7, 0) {$x$-axis};
\draw (4,-0.2)--(7, -0.2);
\node [below] at (6, -0.2) {V};

\draw[thick, dashed] (-7.2, -0.3)--(-5.45,-2);
\draw[thick, dashed] (-3.5, -0.3)--(-5.25,-2);
\node [below] at(-5.35, -2) {\scriptsize $-x/t=\mathcal{O}(1)$};

\draw[thick, dashed] (-5, 0.21)--(-0.2,2);
\draw[thick, dashed] (5, 0.21)--(0.2, 2);
\node [above] at(0, 2) {\scriptsize $|x/t^{1/3}|\lesssim 1$};

\draw[thick, dashed] (7, -0.3)--(5.45,-2);
\draw[thick, dashed] (4, -0.3)--(5.25,-2);
\node [below] at(5.35, -2) {\scriptsize $x/t=\mathcal{O}(1)$};
\end{tikzpicture}
\label{fig:regions}
\end{figure}
Recall \cite{CL} where we divide the $x$-$t$ space-time region into five parts as illustrated in Figure \ref{fig:regions}. For fixed $M>1$,  $z_0$ given by \eqref{stationary pt} and an auxiliary parameter $\tau=z_0^3 t$, we define the regions as follows:
\begin{itemize}
    \item Region I: $x<0$, $M^{-1}<z_0=\sqrt{ \frac{|x|}{12t}}<M,\,\tau=\left( \frac{|x|}{12t^{1/3}}\right)^{3/2}\gg 1$;
    \item Region II: $x<0$, $M^{-1}\leq \tau=\left( \frac{|x|}{12t^{1/3}}\right)^{3/2} $;
    \item Region III: $\tau=\left( \frac{|x|}{12t^{1/3}}\right)^{3/2}\leq M$;
    \item Region IV: $x>0$, $z_0=\sqrt{ \frac{|x|}{12t}}\leq M, \, \tau=\left( \frac{|x|}{12t^{1/3}}\right)^{3/2}\geq M^{-1}$;
    \item Region V: $x>0$, $ z_0=\sqrt{ \frac{|x|}{12t}}> M^{-1}, \, \tau=\left( \frac{|x|}{12t^{1/3}}\right)^{3/2}\gg 1$.
\end{itemize}
By the $\overline{\partial}$ version of the  nonlinear steepest descent method first obtained in \cite{DMM18}, the following theorem regarding the long time asymptotics of the mKdV equation is obtained in \cite{CL}:
\begin{theorem}
\label{thm:main1}
Given initial data $u_0 \in H^{2,1}(\bbR)$, let $u$ be the solution to the mKdV equation
\begin{equation}
	u_t + u_{xxx} - 6u^2u_x=0 \qquad (x, t)\in (\mathbb{R}, \mathbb{R}^+).
\end{equation}
Let $z_0$ \eqref{stationary pt}, be the stationary point of the phase function \eqref{mKdV.phase}, and define

\begin{equation}
\label{kappa}
\kappa=-\dfrac{1}{2\pi}\log(1-|r(z_0)|^2).
\end{equation}
where $r$ is defined in \eqref{reflection}.
Then we have the following asymptotics
\begin{enumerate}
\item[(i)]  In Region  $\text{I}$, 
\begin{align*}
 u(x,t) &= \left( \dfrac{\kappa}{3tz_0}\right)^{1/2}\cos \left(16tz_0^3-\kappa\log(192tz_0^3)+\phi(z_0) \right) +  \mathcal{O}\left( (z_0 t)^{-3/4}\right) \\
 &=\left(\frac{\kappa}{3 t z_0}\right)^{1 / 2} \cos \left(16 t z_0^3-\kappa \log \left(192 t z_0^3\right)+\phi\left(z_0\right)\right)+\mathcal{O}\left(t^{-\frac{1}{2}}\left(\frac{|x|}{t^{\frac{1}{3}}}\right)^{-\frac{3}{8}}\right)
\end{align*}
where 
\begin{align*}
\phi(z_0) &=\arg \Gamma(i\kappa)-\dfrac{\pi}{4}-\arg r(z_0)\\
        &\quad +\dfrac{1}{\pi}\int_{-z_0}^{z_0}\log\left( \dfrac{1-|r(\zeta)|^2}{1-|r(z_0)|^2} \right)\dfrac{d\zeta}{\zeta-z_0}. 
\end{align*}
\item[(ii)] In Region  $\text{II}$, 
$$u(x,t)=\dfrac{1}{(3t)^{1/3}}P\left( \dfrac{x}{ (3t)^{1/3} } \right)+\mathcal{O} \left(t^{-\frac{1}{2}}\left(\frac{|x|}{t^{\frac{1}{3}}}\right)^{-\frac{3}{8}}\right).$$
\item[(iii)]  In Region  $\text{III}$, 
$$u(x,t)=\dfrac{1}{(3t)^{1/3}}P\left( \dfrac{x}{ (3t)^{1/3} } \right)+\mathcal{O} \left(  t^{ -1/2 } \right).$$

\item[(iv)] In Region  $\text{IV}$, 
\begin{align*}
u(x,t)&=\dfrac{1}{(3t)^{1/3}}P\left( \dfrac{x}{ (3t)^{1/3} } \right)+\mathcal{O} \left( (t\tau)^{-1/2}\right)\\
&=\frac{1}{(3 t)^{1 / 3}} P\left(\frac{x}{(3 t)^{1 / 3}}\right)+\mathcal{O}\left(\left(x^3 t\right)^{-\frac{1}{4}}\right).
\end{align*}
 
\item[(v)] In Region  $\text{V}$, 
$$u(x,t)=\mathcal{O} \left( t^{-1}\right). $$
\end{enumerate}
In the above asymptotics for Regions $\text{II}$, $\text{III}$, $\text{IV}$,  $P$ is a solution of the Painlev\'e $\text{II}$ equation$$P''(s)-sP(s)-2P^3(s)=0$$
determined by $r(0)$. Note that given $r(z)\in H^1(\bbR)$, $r$ is defined pointwise and $r(0)$ makes sense. Also note that in all asymptotics above, the implicit constants in the remainder terms depend only on
$\norm{r}{H^{1}(\bbR)}$.
\end{theorem}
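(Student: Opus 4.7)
My plan is to use the $\overline{\partial}$ version of the nonlinear steepest descent method (first developed in \cite{DMM18}, as noted) applied to Problem \ref{prob:mKdV.RH0}. Starting from the Beals--Coifman formula \eqref{mKdV.q} and the time-evolved data $r(z)e^{2i\theta}$ with phase \eqref{mKdV.phase} and stationary points $\pm z_0$ given in \eqref{stationary pt}, I partition the $(x,t)$ half-plane into the five regions according to $z_0$ and the auxiliary parameter $\tau = z_0^3 t$. In each region the RHP is simplified, up to a controlled error, to an explicitly solvable model RHP from which the asymptotics of Theorem \ref{thm:main1} are read off.

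The first steps, common to Regions I--IV, are the standard $\delta$-function diagonalization and the non-analytic extension. Introducing
\[
\delta(z) = \exp\!\left(\frac{1}{2\pi i}\int_{-z_0}^{z_0}\frac{\log(1-|r(\zeta)|^2)}{\zeta - z}\,d\zeta\right),
\]
the diagonal conjugation by $\delta^{\sigma}$ converts the jump \eqref{mKdV.V} on $(-z_0,z_0)$ into the identity and rewrites the jump on $\mathbb{R}\setminus[-z_0,z_0]$ in the upper/lower triangular factorization that favors deformation into the half-planes where $\textrm{Im}\,\theta$ has the right sign. Because $r \in H^{1,2}(\mathbb{R})$ is only Sobolev-regular, I replace analytic continuation by a $\dbar$ extension of the triangular factors off $\mathbb{R}$, matching $r$ on $\mathbb{R}$ and with $\dbar$-derivative supported in the desired sectors and $L^p$-estimated in terms of $\norm{r}{H^{1,2}(\mathbb{R})}$. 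The result is a mixed RHP--$\dbar$ problem on a local cross contour at each stationary point.

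In Region I, where $z_0$ is bounded away from $0$ and $\infty$ and $tz_0^3\gg 1$, the two stationary points decouple, each local model is a standard parabolic-cylinder parametrix, and matching them by symmetry produces precisely the cosine term with amplitude $(\kappa/(3tz_0))^{1/2}$, phase $16tz_0^3 - \kappa\log(192tz_0^3) + \phi(z_0)$, and $\dbar$-error of size $\mathcal{O}((z_0 t)^{-3/4})$, which one rewrites in terms of $|x|/t^{1/3}$ as stated. In Regions II, III, IV the stationary points coalesce (or stay within the transition window of size $t^{-1/3}$); the rescaling $\zeta = t^{1/3}z$ turns the local model into the Riemann--Hilbert problem for the Painlev\'e II equation with Stokes data determined by $r(0)$, and the reconstruction formula \eqref{mKdV.q} then produces the Painlev\'e profile $(3t)^{-1/3}P(x/(3t)^{1/3})$ with the stated $\tau$- or $t$-dependent remainders. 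In Region V, with $x>0$, the stationary points are purely imaginary ($z_0^2 = -x/(12t) < 0$), so $\textrm{Re}(i\theta)$ is strictly negative on a suitably deformed contour off $\mathbb{R}$, and a direct small-norm RHP argument gives $u(x,t) = \mathcal{O}(t^{-1})$.

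The hardest part will be the uniformity of the error analysis across the region transitions -- particularly between I and II and between IV and V, where $\tau$ interpolates between bounded and arbitrarily large values -- because the size of the $\dbar$-sector support, the separation $2z_0$ between stationary points, and the local parametrix error scale in competing ways; balancing these into a single-formula remainder requires the recent $L^p$ a priori estimates of \cite{CLT}. Once these are in place, the Lipschitz bijection of Theorem \ref{thm:biject} ensures that all implicit constants depend only on $\norm{r}{H^{1}(\mathbb{R})}$, as claimed.
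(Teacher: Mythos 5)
Your plan follows essentially the same route as the paper, which does not reprove Theorem \ref{thm:main1} but quotes it from \cite{CL}: the chain of transformations $m=m^{(3)}\,m^{\mathrm{LC}}\,\mathcal{R}^{(2)}(z)^{-1}\delta(z)^{\sigma_3}$ recalled in Section \ref{sec:dbar} is exactly your $\delta$-conjugation, $\dbar$-extension, parabolic-cylinder local models in Region I, Painlev\'e II model under the scaling $z\mapsto \zeta t^{-1/3}$ in Regions II--IV, and small-norm argument in Region V, with the same error exponents. The only imprecision is that the $\delta^{\sigma}$ conjugation does not turn the jump on $(-z_0,z_0)$ into the identity; it removes the diagonal middle factor of the lower--diagonal--upper factorization there, leaving triangular factors that are then deformed off the axis, which is in fact what your subsequent steps use.
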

The following corollary is a direct consequence of Theorem \ref{thm:main1}. 
\begin{corollary}\label{cor:pointwise}
    Let $u(x,t)$ be the solution to Equation \eqref{eq:mKdV} with the initial condition $u_0\in H^{2,1}(\bbR)$. Then for  $t>0$,
    $$\norm{u(x,t)}{L^\infty(dx)}< c (1+t)^{-1/3}.$$
\end{corollary}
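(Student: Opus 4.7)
The plan is to derive the bound directly from Theorem \ref{thm:main1} by checking, region by region, that the leading-order asymptotic plus error terms are dominated by $t^{-1/3}$, and to separately handle bounded $t$ using the conservation laws. I would split the analysis into the large-time regime $t\ge 1$ and the bounded regime $0\le t\le 1$.

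For $t\ge 1$, every point $(x,t)$ lies in exactly one of Regions I through V. In Region I, I would use $z_0\asymp 1$, so that both the principal term $(\kappa/(3tz_0))^{1/2}\cos(\cdots)$ and the error $\mathcal{O}((tz_0)^{-3/4})$ are $\mathcal{O}(t^{-1/2})$, which is better than $t^{-1/3}$. In Regions II, III, and IV the leading term is $(3t)^{-1/3}P(x/(3t)^{1/3})$; since the relevant Painlev\'e II transcendent (uniquely determined by $r(0)$ with $|r(0)|<1$) is known to be uniformly bounded on $\bbR$, this term is $\mathcal{O}(t^{-1/3})$, and the error contributions are at most $\mathcal{O}(t^{-1/2})$. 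In Region V the whole solution is $\mathcal{O}(t^{-1})$. Taking the worst of these five bounds across all $(x,t)$ with $t\ge 1$ yields $\|u(\cdot,t)\|_{L^\infty}\lesssim t^{-1/3}$.

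For $0\le t\le 1$, I would invoke conservation of mass and energy \eqref{conserve} together with the one-dimensional Sobolev inequality $\|u\|_{L^\infty}^2\le 2\|u\|_{L^2}\|u_x\|_{L^2}$ (which follows from $u^2(x)=2\int_{-\infty}^{x} u\,u_x\,dy$), giving
\[
\|u(\cdot,t)\|_{L^\infty}^2\le 2\,M(u_0)^{1/2}\,H(u_0)^{1/2},
\]
a bound uniform in $t$ that depends only on $\|u_0\|_{H^{2,1}(\bbR)}$.

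Combining the two regimes produces $\|u(\cdot,t)\|_{L^\infty}<c(1+t)^{-1/3}$, with $c$ depending on $\|u_0\|_{H^{2,1}(\bbR)}$; the dependence of the implicit constants from Theorem \ref{thm:main1} on $\|r\|_{H^{1}(\bbR)}$ is absorbed via the Lipschitz continuity of the direct scattering map stated in Theorem \ref{thm:biject}. The argument is essentially bookkeeping; the only mildly delicate point is confirming that the Painlev\'e II transcendent arising in Regions II--IV is bounded on $\bbR$, which is a classical fact for the Ablowitz--Segur / Hastings--McLeod family of solutions arising in this problem.
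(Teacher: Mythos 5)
Your argument is correct and follows essentially the same route as the paper, which simply asserts the corollary as a direct consequence of Theorem \ref{thm:main1}; your region-by-region bookkeeping (using $z_0\asymp 1$ in Region I, $\tau\gtrsim M^{-1}$ in Regions II--IV, and the $t^{-1}$ bound in Region V) together with the conservation-law/Sobolev bound for $0\le t\le 1$ supplies exactly the details the paper leaves implicit. One small sharpening: the boundedness of $P$ on $\bbR$ holds here because $\|r\|_{L^\infty}<1$ forces $|r(0)|<1$, placing $P$ in the Ablowitz--Segur class; the Hastings--McLeod solution (the $|r(0)|=1$ endpoint you mention) grows like $\sqrt{-s/2}$ as $s\to-\infty$ and is \emph{not} bounded, so it should be excluded from, rather than cited for, that step.
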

And this corollary will provide us with the uniform decay estimate of $u(x,t)$ needed in this paper.

\subsection{The inverse scattering for the perturbed mKdV} If we replace the RHS of \eqref{eq: compat} with $\varepsilon G(x)$ where
\begin{equation}
\label{G}
  G(x) =\left(\begin{array}{cc}
0 & u^\ell \\
{u}^\ell & 0
\end{array}\right) 
\end{equation}
then it is easy to check that 
\begin{equation}
    \label{eq: pcompat}
    \left[\partial_x-U, \partial_t-W\right]=\varepsilon G(x)
\end{equation}
is the compatibility condition for \eqref{pmKdV}.  Let $u(x,t)$ be the solution to Equation \eqref{pmKdV}. We then let 
\begin{equation}
    U(x,t)=\twomat{0}{u(x,t)}{{u(x,t)}}{0}
\end{equation}
be the potential of \eqref{L}. Following the standard direct and inverse scattering formalism, we can also formulate the following Riemann-Hilbert problem:
\begin{problem}
\label{prob:mKdV.RH1}
Find a $2\times 2$ matrix-valued function $m(z;x,t)$ on $\bbC \setminus \bbR$ with the following properties:
\begin{enumerate}
\item		$m(z;x,t) \to I$ as $|z| \to \infty$,
			\medskip
			
\item		$m(z;,x,t)$ is analytic for $z \in \mathbb{C} \setminus \bbR$ with continuous boundary values
			$$m_\pm(z;x,t) = \lim_{\varepsilon \to 0} m(z\pm i\varepsilon;x,t),$$
			\medskip
			
\item		
The jump relation $m_+(z;x,t) = m_	-(z;x,t) e^{-ix \ad \sigma} v(z;q(t))$ holds, where
			\begin{equation}
			\label{pmKdV.V}
			e^{-ix\ad \sigma} v(z;u(t))=\twomat{1-|r(z;u(t))|^2}
			 {-\overline{r(z;u(t))}e^{-2ix}}{r(z;u(t))e^{2ix}}
   {1}.
			\end{equation}
\end{enumerate}
\end{problem}
Define
\begin{equation}
    r(t)(z)=e^{-8i t z^3} r(z ; q(t))
\end{equation}
and the following integral equation is originally obtained in \cite{Kaup} and \cite{KN}:
\begin{equation}
\label{eq:r}
    r(t)(z)=r_0(z)+\varepsilon \int_0^t d s e^{-8i z^3 s} \int_{-\infty}^{+\infty} d y e^{-2i y z}\left(m_{-}^{-1}(z ; y,s) G(u(y, s)) m_{-}(z ; y,s)\right)_{21}
\end{equation}
where
\begin{equation}
    r_0(z)=\mathcal{R}\left(u_0\right)(z), \quad u_0=u(t=0)
\end{equation}
and $m_\pm$ is given by (2) of Problem \ref{prob:mKdV.RH1}.
%

\subsection{Main results}
Before we state our main result, we point out that in the proof of Lemma \ref{le:dDF}, in the presence of $\log$ singularities, we need the estimate \eqref{est:dm} which requires that $\norm{r}{H^1}<1/2$. By the mapping property of the direct scattering transform given in Theorem \ref{thm:biject}, we can easily deduce that there exists some $\tilde{c}>0$ such that if the initial data $\norm{u_0}{L^{2,1}}<\tilde{c}$, we have that $\norm{r}{H^1}<1/2$. So we define
\begin{equation}
    H^{2,1}_{\tilde{c}}(\mathbb{R}):=H^{2,1}(\mathbb{R})\cap \lbrace f: \norm{f}{L^{2,1}}<\tilde{c}\rbrace
\end{equation}
and
\begin{equation}
    H^{1,2}_{\scriptscriptstyle 1/2}(\mathbb{R}):=H^{1,2}(\mathbb{R})\cap \lbrace f: \norm{f}{H^1}<\rho<1/2\rbrace.
\end{equation}
With the choice of $\tilde{c}$, we deduce the following proposition:
\begin{proposition}
 \label{thm:biject'}
    Given $u_0$ the initial data of Equation \eqref{pmKdV}, then we have the direct scattering map
    \begin{equation}
        \mathcal{R}: H^{2,1}_{\tilde{c}}(\bbR)\ni u_0\mapsto r(z)\in H^{1,2}_{\scriptscriptstyle 1/2}(\bbR).
    \end{equation}
    \end{proposition}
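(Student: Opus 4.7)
The plan is to view the Proposition as a direct consequence of the Lipschitz continuity of the direct scattering map (as developed in \cite{Zhou98}) together with the observation that the zero potential gets mapped to the zero reflection coefficient. The statement is therefore a quantitative ``continuity at zero'' assertion.

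First, I would record that $\mathcal{R}(0)=0$. When $u\equiv 0$, the AKNS system \eqref{L} reduces to $\Psi_x=-iz\sigma\Psi$, so both Jost solutions coincide with $e^{-ixz\sigma}$ and the scattering matrix $S(z)$ equals the identity. Consequently $b(z)=\bb(z)=0$ and $r(z)=\bb(z)/a(z)=0$.

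Second, I would invoke Zhou's mapping theorem in the specific case $k=0$, $j=1$: the map $\mathcal{R}:H^{0,1}(\bbR)=L^{2,1}(\bbR)\to H^{1,0}_{1}(\bbR)=H^{1}(\bbR)\cap\{r:\|r\|_{L^\infty}<1\}$ is Lipschitz continuous. Since $H^{2,1}(\bbR)\hookrightarrow L^{2,1}(\bbR)$, we can apply this to any $u_{0}\in H^{2,1}(\bbR)$. Using $\mathcal{R}(0)=0$ from the previous step, one obtains a Lipschitz constant $L>0$ (depending only on a neighborhood of $0$) so that
\begin{equation*}
\|\mathcal{R}(u_{0})\|_{H^{1}}=\|\mathcal{R}(u_{0})-\mathcal{R}(0)\|_{H^{1}}\leq L\,\|u_{0}\|_{L^{2,1}}.
\end{equation*}
Choosing $\tilde{c}:=1/(2L)$ then forces $\|r\|_{H^{1}}<1/2$ whenever $\|u_{0}\|_{L^{2,1}}<\tilde{c}$.

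Third, I would combine this quantitative estimate with Theorem \ref{thm:biject}. That theorem guarantees that for $u_{0}\in H^{2,1}(\bbR)$ the reflection coefficient $r$ already lies in $H^{1,2}(\bbR)$. Intersecting with the $H^{1}$ smallness produced in the previous step gives $r\in H^{1,2}_{\scriptscriptstyle 1/2}(\bbR)$, which is the desired conclusion.

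The argument is essentially soft and presents no real obstacle; the only point requiring a moment of care is that Zhou's result must be applied in a small enough $L^{2,1}$-ball around zero so that the local Lipschitz constant $L$ is finite and uniform. This is automatic since $\mathcal{R}$ is smooth near the zero potential, so the threshold $\tilde{c}$ is simply $\min(1,1/(2L))$ with $L$ evaluated on a unit ball around $0$.
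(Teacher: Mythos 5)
Your argument is correct and follows essentially the same route as the paper, which deduces the proposition directly from the Lipschitz continuity of $\mathcal{R}$ in Theorem \ref{thm:biject} (Zhou's mapping theorem) to get $\norm{r}{H^1}<1/2$ for $\norm{u_0}{L^{2,1}}<\tilde{c}$. Your write-up merely makes explicit the details the paper leaves implicit, namely $\mathcal{R}(0)=0$ and the choice $\tilde{c}=1/(2L)$ from the local Lipschitz constant.
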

\begin{theorem}
\label{thm:r}
Assume $u_0=u(x,0)\in H^{2,1}_{\tilde{c}} $ and $\ell>9$. There exists $\varepsilon(\rho, \eta)\ll 1$ such that $u(x,t) \in C\left([0, \infty), H^{2,1}\right)$ solves \eqref{pmKdV} with  $
\varepsilon=\varepsilon(\rho, \eta)$ and $r(t)(z)=e^{-8i t z^3} \mathcal{R}(q(t))(z)$ uniquely solves \eqref{eq:r}. Moreover, we have that for $t\in (0,+\infty)$, $\norm{r(t)}{H^{1,2}}<2\eta$ and $\norm{r(t)}{H^1}<1/2$ .
\end{theorem}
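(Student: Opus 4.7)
The plan is to solve the nonlinear integral equation \eqref{eq:r} by Banach contraction on a ball of time-dependent reflection coefficients, closing the loop between scattering data and potential via the bijection of Theorem \ref{thm:biject}. Set $\eta \geq \norm{r_0}{H^{1,2}}$ and
\[
\mathcal{B} := \bigl\{ r \in C([0,\infty); H^{1,2}(\bbR)) : \sup_{t \geq 0} \norm{r(t)}{H^{1,2}} \leq 2\eta,\ \sup_{t \geq 0}\norm{r(t)}{H^1} \leq 1/2 \bigr\}.
\]
Let $T$ denote the map sending $r$ to the right-hand side of \eqref{eq:r}. For each $r \in \mathcal{B}$ and each $s \geq 0$, inverse scattering applied to $e^{8isz^3} r(s)(z)$ reconstructs $u(\cdot,s)\in H^{2,1}$; reading this as the time-$s$ snapshot of a free-mKdV flow with scattering data $r(s)$, Corollary \ref{cor:pointwise} yields $\norm{u(\cdot,s)}{L^\infty}\leq c(1+s)^{-1/3}$ uniformly on $\mathcal{B}$, while the Beals--Coifman apparatus together with $\norm{r(s)}{L^\infty}\leq 1/2$ gives the uniform bounds $\norm{m_\pm}{L^\infty}\lesssim 1$ and $\norm{m_\pm-I}{L^2(dz)}\lesssim \norm{r}{L^2}$.

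The core task is to estimate the $ds\,dy$ double integral in \eqref{eq:r} in $H^{1,2}_z$. Since $G(u)\sim u^\ell$, the plain $L^2(dz)$ part is controlled by Plancherel in $z$ and $\norm{G(u(\cdot,s))}{L^2_y}\lesssim \norm{u}{L^\infty}^{\ell-1}\norm{u}{L^2}\lesssim (1+s)^{-(\ell-1)/3}$. The weight $z^2$ is converted by two integrations by parts in $y$ into $y$-derivatives landing on $G(u)$ and on $m_-$; the latter are controlled via the AKNS Lax equation \eqref{L}, giving an $s$-integrand of order $(1+s)^{-(\ell-2)/3}$. The $\partial_z$ derivative in the $H^{1,2}$-norm brings down a factor of $y$ from $e^{-2iyz}$ (reabsorbed by the $H^{2,1}$-weight on $u$) and, more seriously, a factor of $s$ from $e^{-8isz^3}$, producing an $s$-integrand of size $s(1+s)^{-\ell/3}$ whose integral over $[0,\infty)$ converges precisely when $\ell>9$. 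This accounts for the stated hypothesis. Combining these bounds yields $\norm{T(r)(t)}{H^{1,2}}\leq \eta + C(\rho,\eta)\,\varepsilon$, so $T(\mathcal{B})\subset\mathcal{B}$ for $\varepsilon\leq \varepsilon(\rho,\eta)$ sufficiently small.

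The contraction estimate $\norm{T(r_1)-T(r_2)}{}\leq \tfrac{1}{2}\norm{r_1-r_2}{}$ on $\mathcal{B}$ follows by running the same bounds on the difference: Lipschitz dependence of $u$ on $r$ is supplied by Theorem \ref{thm:biject}, and Lipschitz dependence of $m_-$ on $r$ by the resolvent identity applied to $(1-C_{v_\theta})^{-1}$, which depends linearly on $r$ modulo uniformly bounded factors. Banach's fixed-point theorem then delivers a unique $r(\cdot)\in \mathcal{B}$, from which inverse scattering recovers $u\in C([0,\infty);H^{2,1})$. The main obstacle is propagating these Lipschitz bounds uniformly in $t$ at the level of the weighted derivative part of the $H^{1,2}$-norm, where the worst-case $s$-factor from $\partial_z e^{-8isz^3}$ conspires with the weakest $L^p$ control of $\partial_z m_-$ to force the threshold $\ell>9$; this is precisely where the refined $L^p$ \emph{a priori} estimates from \cite{CLT} are invoked.
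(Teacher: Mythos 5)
Your overall architecture --- recasting \eqref{eq:r} as a fixed-point problem for $t\mapsto r(t)$ on a ball in $C([0,\infty);H^{1,2})$, feeding each $r(s)$ through the inverse scattering map so that Corollary \ref{cor:pointwise} gives $\|u(\cdot,s)\|_{L^\infty}\lesssim (1+s)^{-1/3}$, and closing by contraction --- is exactly the paper's strategy. The gap is in the step you claim ``accounts for the stated hypothesis'' $\ell>9$. You assert that $\partial_z$ hitting $e^{-8isz^3}$ produces an $s$-integrand of size $s(1+s)^{-\ell/3}$ whose integral over $[0,\infty)$ ``converges precisely when $\ell>9$''; but $\int_0^\infty s(1+s)^{-\ell/3}\,ds$ converges as soon as $\ell>6$, so your own computation does not produce the threshold. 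Moreover, the paper's self-estimate for $\partial_z F$ is in fact sharper than your naive factor of $s$: Lemma \ref{le:dF} uses the identity $\partial_z-\tilde L=L-12tzU$, with $\tilde L=-ix\,\mathrm{ad}\,\sigma-12tz\partial_x$, to trade the $z$-derivative for $x$-derivatives of $G\sim u^\ell$, yielding $\|\partial_z F\|_{L^2}\lesssim (1+t)^{-(\ell-4)/3}$, which is time-integrable already for $\ell>7$.

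The condition $\ell>9$ actually originates in the \emph{contraction} (difference) estimate, which you dispatch in one sentence as ``running the same bounds on the difference.'' That is precisely where the decisive loss occurs: the only available $L^\infty(dz)$ control of $\Delta m_\pm$ is \eqref{est:dm}, whose right-hand side carries the factor $z_0^2F(x,t,\eta)$ growing linearly in $x$ and $t$ (and whose derivation is the reason the paper must impose $\|r\|_{H^1}<1/2$, so that $(1-\rho)^{-1}\|r'\|_{L^2}<1$). Propagating this growth through the $\partial_z$ part of $F$ gives $\|\Delta\partial_z F\|_{L^2}\lesssim (1+t)^{-(\ell-6)/3}\|\Delta r\|_{H^{1,2}}$ (Lemma \ref{le:dDF}, hence \eqref{est:DF} in Proposition \ref{prop:G-estimate}), and it is the integrability of $(1+t)^{-(\ell-6)/3}$ over $[0,\infty)$ that forces $\ell>9$, as reflected in the smallness conditions \eqref{epsi2} and \eqref{epsi4} where $\ell/3-3$ must be positive. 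Your proposal neither obtains a quantitative bound for $\Delta m_-$ (or $\partial_z m_-$) nor explains how its growth in $x$ and $t$ is absorbed by the decay of $u^\ell$, so the contraction step --- and with it uniqueness, the ball invariance constants, and the threshold $\ell>9$ --- is not actually established.
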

\begin{proof}
    Equation \eqref{eq:r} can be written in the following form:
\begin{equation}
    \label{eq:rF}
    r(t)(z)=r_0+\varepsilon \int_0^t F(s, r(s)) d s
\end{equation}
with
\begin{equation}
\label{F}
    F(z,t;r)=P_{21} \int_{-\infty}^{+\infty} e^{-i\left(2y z+8t z^3\right)} m_{-}^{-1}\left(y, z ; e^{8i z^3 t} r\right) G\left(\mathcal{R}^{-1}\left(e^{8i z^3 t} r\right)\right)(y) m_{-}\left(y, z ; e^{8i z^3 t} r\right) d y
\end{equation}
where $P_{21}$ denotes the projection of $(2\times2)$-matrices onto their $(2,1)$-entries. For each fixed $t$ sufficiently large ,  by Proposition \ref{prop:G-estimate}
\begin{equation}
\label{epsi1}
    \varepsilon <\frac{\ell/3-2}{ C_{F}^{1,2}(\rho, \eta)}
\end{equation}
and 
\begin{equation}
\label{epsi2}
    \varepsilon <\frac{\ell/3-3}{C_{\Delta F}^{1,2}(\rho, \eta) }
\end{equation}
imply that the RHS of \eqref{eq:rF} defines a contraction, whence,  existence and uniqueness of the fixed point follow. Moreover, it is easy to check that
\begin{equation}
\label{epsi3}
    \varepsilon <\frac{\eta}{2}\frac{\ell/3-2}{C_{F}^{1,2}((\rho+1/2)/2, 2\eta)} ,
\end{equation}
and 
\begin{equation}
\label{epsi4}
     \varepsilon <\frac{1-2\rho}{8}\frac{\ell/3-3}{ C_{F}^{1,2}((\rho+1)/2, 2\eta)}
\end{equation}
imply that $\norm{r(t)}{H^{1,2}}<2\eta$ and $\norm{r(t)}{H^1}<(\rho+1/2)/2$. Therefore we choose $\varepsilon=\varepsilon(\rho, \eta) $ satisfying \eqref{epsi1}-\eqref{epsi4}.
\end{proof}
\begin{theorem}
\label{main}
Given $u_0=u(x,0)\in H^{2,1}_{\scriptscriptstyle 1/2}$, if $u(x,t)$ solves Equation \eqref{pmKdV} with $\varepsilon=\varepsilon(\rho, \eta)$ satisfying \eqref{epsi1}-\eqref{epsi4}. Let  $z_0$ be the stationary point of the phase function \eqref{mKdV.phase}, and 
\begin{equation}
\label{kappat}
\kappa(t)=-\dfrac{1}{2\pi}\log(1-|r(t)(z_0)|^2)
\end{equation}
where $r$ is defined in \eqref{reflection}.
Then we have the following asymptotics
\begin{enumerate}
\item[(i)]  In Region  $\text{I}$, 
\begin{align*}
 u(x,t) &= \left( \dfrac{\kappa(t)}{3tz_0}\right)^{1/2}\cos \left(16tz_0^3-\kappa(t)\log(192tz_0^3)+\phi(z_0) \right) +  \mathcal{O}\left( (z_0 t)^{-3/4}\right) \\
 &=\left(\frac{\kappa(t)}{3 t z_0}\right)^{1 / 2} \cos \left(16 t z_0^3-\kappa(t) \log \left(192 t z_0^3\right)+\phi\left(z_0\right)\right)+\mathcal{O}\left(t^{-\frac{1}{2}}\left(\frac{|x|}{t^{\frac{1}{3}}}\right)^{-\frac{3}{8}}\right)
\end{align*}
where 
\begin{align*}
\phi(z_0) &=\arg \Gamma(i\kappa(t))-\dfrac{\pi}{4}-\arg r(t)(z_0)\\
        &\quad +\dfrac{1}{\pi}\int_{-z_0}^{z_0}\log\left( \dfrac{1-|r(t)(\zeta)|^2}{1-|r(t)(z_0)|^2} \right)\dfrac{d\zeta}{\zeta-z_0}. 
\end{align*}
\item[(ii)] In Region  $\text{II}$, 
$$u(x,t)=\dfrac{1}{(3t)^{1/3}}P\left( \dfrac{x}{ (3t)^{1/3} } \right)+\mathcal{O} \left(t^{-\frac{1}{2}}\left(\frac{|x|}{t^{\frac{1}{3}}}\right)^{-\frac{3}{8}}\right).$$
\item[(iii)]  In Region  $\text{III}$, 
$$u(x,t)=\dfrac{1}{(3t)^{1/3}}P\left( \dfrac{x}{ (3t)^{1/3} } \right)+\mathcal{O} \left(  t^{ -1/2 } \right).$$

\item[(iv)] In Region  $\text{IV}$, 
\begin{align*}
u(x,t)&=\dfrac{1}{(3t)^{1/3}}P\left( \dfrac{x}{ (3t)^{1/3} } \right)+\mathcal{O} \left( (t\tau)^{-1/2}\right)\\
&=\frac{1}{(3 t)^{1 / 3}} P\left(\frac{x}{(3 t)^{1 / 3}}\right)+\mathcal{O}\left(\left(x^3 t\right)^{-\frac{1}{4}}\right).
\end{align*}
 
\item[(v)] In Region  $\text{V}$, 
$$u(x,t)=\mathcal{O} \left( t^{-1}\right). $$
\end{enumerate}
In the above asymptotics for Regions $\text{II}$, $\text{III}$, $\text{IV}$,  $P$ is a solution of the Painlev\'e $\text{II}$ equation$$P''(s)-sP(s)-2P^3(s)=0$$
determined by $r(t)(0)$. Note that given $r(t)(z)\in H^1(\bbR)$, $r$ is defined pointwise and $r(t)(0)$ makes sense. Also note that in all asymptotics above, the implicit constants in the remainder terms depend only on
$\norm{r(t)}{H^{1}(\bbR)}$.
\end{theorem}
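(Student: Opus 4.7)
The plan is to reduce Theorem \ref{main} to Theorem \ref{thm:main1} by treating the perturbed problem as a one‐parameter family of ``frozen‐time'' Riemann--Hilbert problems indexed by $t$, where the effective reflection coefficient is $r(t)(z)$ built in Theorem \ref{thm:r}. First I would rewrite the jump matrix in Problem \ref{prob:mKdV.RH1}: using $r(z;u(t))=e^{8itz^{3}}r(t)(z)$, the $(2,1)$-entry $r(z;u(t))e^{2ixz}$ becomes $r(t)(z)e^{2i\theta}$ with $\theta=xz+4tz^{3}$, exactly the jump data of Problem \ref{prob:mKdV.RH0}. Consequently, at each fixed $t\ge 0$, the potential $u(\cdot,t)$ is recovered from a Problem \ref{prob:mKdV.RH0}-type RHP whose reflection coefficient is $r(t)$ rather than $r$, and whose reconstruction formula \eqref{mKdV.q} is unchanged.

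Next I would invoke Theorem \ref{thm:r} to obtain the crucial uniform bounds
\begin{equation*}
\sup_{t\ge 0}\,\norm{r(t)}{H^{1,2}(\mathbb{R})}<2\eta,\qquad \sup_{t\ge 0}\,\norm{r(t)}{H^{1}(\mathbb{R})}<1/2,
\end{equation*}
which are the sole hypotheses needed to run the $\bar\partial$-steepest descent analysis of \cite{CL} on the RHP obtained in the first step. Because all estimates in that analysis---the conjugation by the $\delta$-function associated with $1-|r|^{2}$, the lens opening on the four rays emanating from $\pm z_{0}$, the parametrix matching at the stationary points, and the small-norm analysis of the remainder $\bar\partial$-RHP---depend on the reflection coefficient only through $\norm{r(t)}{H^{1}}$ (with the phase $\phi$ and the constant $\kappa$ depending on $r(t)$ pointwise at $z_{0}$), applying them with $r$ replaced by $r(t)$ yields the asymptotic formulas of Theorem \ref{thm:main1} with $\kappa\leadsto\kappa(t)$ and $r\leadsto r(t)$. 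The five regions and the Painlev\'e II parametrix in Regions II, III, IV are handled identically, with $r(t)(0)$ determining $P$ at each fixed $t$.

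The third step is to verify uniformity of the implicit constants in the error terms. This follows because the constants in the \cite{CL} analysis are monotone functions of $\norm{r(t)}{H^{1}}$ and $\norm{r(t)}{H^{1,2}}$, both of which are bounded uniformly in $t$ by Theorem \ref{thm:r}; hence the $\mathcal{O}$ symbols in (i)--(v) are $t$-uniform. One has to be slightly careful that the reconstruction \eqref{mKdV.q} truly produces the solution of the perturbed equation \eqref{pmKdV}: this is guaranteed because, by construction of $r(t)$ via the integral equation \eqref{eq:r}, the potential extracted from the RHP at time $t$ with data $r(t)e^{8itz^{3}}$ coincides with $u(\cdot,t)$ for the perturbed flow.

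The main obstacle, and the only place where genuine work separates the perturbed from the integrable case, is precisely the verification that Theorem \ref{thm:r} delivers reflection coefficients with the above two uniform bounds for all $t\in[0,\infty)$; this is where the restriction $\ell>9$, the smallness of $\varepsilon$ dictated by \eqref{epsi1}--\eqref{epsi4}, and the pointwise decay of Corollary \ref{cor:pointwise} enter. Once those bounds are in hand, the long-time asymptotics of Theorem \ref{main} reduce to a direct transplant of Theorem \ref{thm:main1} with $r\mapsto r(t)$, $\kappa\mapsto\kappa(t)$.
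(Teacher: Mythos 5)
Your proposal is correct and follows essentially the same route as the paper: the paper's proof of Theorem \ref{main} is precisely the observation that the uniform $H^{1,2}$ (and $H^1<1/2$) bounds on $r(t)$ from Theorem \ref{thm:r} let one run the nonlinear steepest descent of \cite{CL} verbatim with $r$ replaced by $r(t)$. Your write-up simply makes explicit the intermediate steps (identification of the jump data, $t$-uniformity of the implicit constants) that the paper leaves implicit.
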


\begin{proof}
    The asymptotic formula follows from the uniform $H^{1,2}$ estimate of the reflection coefficient $r(t)$ given by Theorem \ref{thm:r} and an application of the nonlinear steepest descent method as is given by
    \cite{CL}.
\end{proof}
From Theorem \ref{thm:r} we deduce that for $t_2>t_1\gg 0$,
\begin{equation}
    \left\|r\left(t_2\right)-r\left(t_1\right)\right\|_{H^{1,2}} \leqslant \frac{ C_{F}^{1,2}((\rho+1)/2, 2\eta)}{\ell/3-2}\left(\frac{1}{\left(1+t_1\right)^{\ell/3-2}}-\frac{1}{\left(1+t_2\right)^{\ell/3-2}}\right),
\end{equation}
and so $\lbrace r(t) \rbrace$ is a Cauchy sequence in $t$ and $r_\infty:=\lim _{t \rightarrow \infty} r(t)$ exists. Then we conclude
\begin{equation}
    \left\|{U}\left(e^{-i z^3 t} r(t)\right)-{U}\left(e^{-i z^3 t} r_{\infty}\right)\right\|_{L^{\infty}(d x)} \leqslant \left\|r(t)-r_{\infty}\right\|_{H^{1,2}}\leqslant \frac{1}{(1+t)^{\ell/3-2}},
\end{equation}
by Lipschitz continuity of the direct/inverse scattering map. So we arrive at the following corollary:
\begin{corollary}
 For $u_0=u(x,0)\in H^{2,1}_{\tilde{c}}$, if  $u(x,t)$ solves Equation \eqref{pmKdV} with $\varepsilon=\varepsilon(\rho, \eta)$ satisfying \eqref{epsi1}-\eqref{epsi4}, then for any $\ell>9$, $u(x,t)$ admits the same asymptotic expansion:
  \begin{equation}
      u(x,t)=\tilde{u}_{as}(x,t)+\tilde{u}_{err}\left(x,t\right)
  \end{equation}
  where the explicit leading term and error term are given by Theorem \ref{thm:main1} with $r$ replaced by $r_\infty$.
\end{corollary}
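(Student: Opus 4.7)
The plan is short and leverages the uniform estimate already derived immediately above the corollary. I would introduce the auxiliary function $\tilde{u}(x,t):=\mathcal{R}^{-1}(e^{8itz^3}r_\infty)(x)$, which is the solution of the \emph{unperturbed} mKdV equation \eqref{eq:mKdV} with initial data $\mathcal{R}^{-1}(r_\infty)$ and hence falls inside the scope of Theorem \ref{thm:main1}. Applying that theorem, $\tilde{u}$ admits in each of the five regions exactly the asymptotic expansion stated in the corollary, that is, $\tilde{u}(x,t)=\tilde{u}_{as}(x,t)+\tilde{u}_{err}(x,t)$ with $r$ in Theorem \ref{thm:main1} replaced by $r_\infty$.

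Next I would compare $u$ to $\tilde{u}$. The Cauchy estimate
\[
\|r(t)-r_\infty\|_{H^{1,2}(\bbR)}\lesssim (1+t)^{-(\ell/3-2)}
\]
is already recorded just above the corollary, and combining it with the Lipschitz continuity of $\mathcal{R}^{-1}$ (Theorem \ref{thm:biject}), applied uniformly in $t$ with the unimodular phase $e^{8itz^3}$ factored into the Riemann--Hilbert jump matrix, produces
\[
\|u(\cdot,t)-\tilde{u}(\cdot,t)\|_{L^\infty(\bbR)}\lesssim (1+t)^{-(\ell/3-2)}.
\]
Consequently $u(x,t)=\tilde{u}_{as}(x,t)+\tilde{u}_{err}(x,t)+\mathcal{O}((1+t)^{-(\ell/3-2)})$.

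The last step is to absorb the residual $\mathcal{O}((1+t)^{-(\ell/3-2)})$ into $\tilde{u}_{err}$ region by region. For $\ell>9$ we have $\ell/3-2>1$, so this residual is $o(t^{-1})$ uniformly, and is dominated by each of the advertised errors: $\mathcal{O}((z_0 t)^{-3/4})$ in Region I (since $z_0\asymp 1$ there), $\mathcal{O}(t^{-1/2})$, refined by a power of $|x|/t^{1/3}$ where appropriate, in Regions II--IV, and $\mathcal{O}(t^{-1})$ in Region V. The binding region is Region V, and matching $(1+t)^{-(\ell/3-2)}$ against $t^{-1}$ is precisely why the threshold $\ell>9$ is imposed.

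The only technical delicacy I expect to have to handle carefully is that the Lipschitz comparison between $u$ and $\tilde{u}$ must account for the $t$-dependent phase in the Riemann--Hilbert data: one has to verify that this phase enters only through the unimodular factor in the jump matrix on $\bbR$, so that the resulting Riemann--Hilbert problem depends on the reflection coefficient in a $t$-uniform way and the $L^\infty$ estimate on the reconstructed potential does not acquire a constant growing in $t$. This is the content of the statement used in the paragraph preceding the corollary and is the only place where the argument is not purely bookkeeping.
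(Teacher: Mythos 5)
Your argument is essentially the paper's own: the text immediately preceding the corollary establishes the Cauchy property of $r(t)$ in $H^{1,2}$ with rate $(1+t)^{-(\ell/3-2)}$, defines $r_\infty$ as the limit, and uses Lipschitz continuity of the inverse scattering map to bound $\bigl\Vert u(\cdot,t)-\mathcal{R}^{-1}(e^{8itz^3}r_\infty)\bigr\Vert_{L^\infty}$ by the same rate, after which the expansion of Theorem \ref{thm:main1} applied to the free evolution with data $r_\infty$ absorbs the residual. Your region-by-region absorption and the observation that Region V (error $\mathcal{O}(t^{-1})$) is the binding case forcing $\ell/3-2>1$ merely make explicit what the paper leaves implicit.
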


\begin{remark}
    In a previous paper by Harrop-Griffith \cite{HGR}, the author obtained long time asymptotics of short range perturbation to mKdV \cite[(1.2)]{HGR} under the smallness assumption of the $H^{1,1}$-norm of the initial data. In our paper we only assume the $L^{2,1}$-norm of the initial condition has certain upper bound thus allowing large oscillation of the initial condition.
\end{remark}
\begin{remark}
    Recall that the focussing mKdV equation 
    $$u_t+u_{xxx}+6u^2u_x=0 $$
     admits soliton solution of the form 
    $$ u(x,t)=2 \zeta  \operatorname{sech}\left(-2 \zeta\left(x-4 \zeta^2 t\right)+\omega\right) $$
    and it easy to check that for any $\zeta>0$,
    $$ \int_{\mathbb{R}}2 \zeta  \operatorname{sech}\left(-2 \zeta\left(x-4 \zeta^2 t\right)+\omega\right) dx=\pi. $$
    So by assuming $\norm{u_0}{L^{2,1}}<\pi$, we can rule out the presence of solitons. This enables us to employ the results in the current paper to investigate the perturbation of the focusing mKdV equation. 
\end{remark}
\subsection{Notations}
Throughout the text, constants $c> 0$ are used generically. $c$ always denotes a constant independent of $x, t$.

And $\Delta$ refers to the difference operator acting on some term $h$ that contains the reflection coefficient $r_i$, $i=1,2$:
$$\Delta h=h(r_2)-h(r_1).$$

We always use $\diamond$ to denote the dummy variable.
\section{A priori estimates}
\label{sec:priori}


The goal of this section is to obtain the following key estimate that is sufficient to prove our Theorem \ref{main}. Throughout the section, we will first consider \emph{a priori} estimate  and the estimates for differences of two solutions. For the first part, we assume that we have a solution $r$ satisfying
\begin{equation}\label{eq:assumption}
  \norm{r}{H^{1,2}_{\scriptscriptstyle 1/2}}\leq\eta,\, \text{and}\, \norm{r}{L^\infty}\leq \rho<1/2. 
\end{equation}
For the second part, we assume that\begin{equation}\label{eq:assumption}
  \norm{r_i}{H^{1,2}_{\scriptscriptstyle 1/2}}\leq\eta,\, \text{and}\, \norm{r_i}{L^\infty}\leq \rho<1/2,\, \text{for }\, i=1,2. 
\end{equation} Compared with \cite[Section 6]{DZ-2}, we make use of the newly obtained resolvent operator estimate in \cite[Proposition 2.2]{CLT} and also  estimate on the $\widetilde{L}$ operator given  in Section \ref{sec:dbar}.

First of all, given estimates for $r$, $r_1$ and $r_2$, solving Problem \ref{prob:mKdV.RH1} and applying the nonlinear steepest descent, the corresponding Beals-Coifman solutions, $u$, $u_1$ and $u_2$ satisfying the pointwise decay for $t> 0$
\begin{equation}
\norm{u(x,t)}{L^\infty(dx)}< c t^{-1/3},\, \norm{u_1(x,t)}{L^\infty(dx)}< c t^{-1/3}, \,\norm{u_2(x,t)}{L^\infty(dx)}< c t^{-1/3}
\end{equation}as in Corollary \ref{cor:pointwise}.

\begin{proposition}
\label{prop:G-estimate}
Let $G$  be the perturbation term \eqref{G}. Set
$$
    F=P_{12} \int e^{-2i \theta} m_{-}^{-1} G m_{-} dy
$$
where $P_{12}$ denotes the projection onto the $(2,1)$ entry of the $2\times 2$ matrix-valued  equation \eqref{eq:rF}.
Then $F$ has the following estimates:
\begin{equation}
\label{est:F}
    \|F\|_{H^{1,2}_{\scriptscriptstyle 1/2}(dz)}\leq \frac{C^{1,2}_F(\rho, \eta)}{(1+t)^{(\ell-3)/3}}
\end{equation}
\begin{equation}
\label{est:DF}
    \|\Delta F\|_{H^{1,2}_{\scriptscriptstyle 1/2}d(z)}\leq \frac{C^{1,2}_{\Delta F}(\rho, \eta)}{(1+t)^{(\ell-6)/3}}\|\Delta r\|_{H^{1,2}_{\scriptscriptstyle 1/2}}.
\end{equation}
where the two constants $C^{1,2}_F(\rho, \eta)$ and $C^{1,2}_{\Delta F}(\rho, \eta)$ are uniform in $x$ and $t$ and are monotonic in $\rho$ and $\eta$.
\end{proposition}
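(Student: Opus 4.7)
The plan is to view $F(z)$ as a generalized Fourier transform in $y$ of the integrand $(m_-^{-1} G m_-)_{21}$, and to exploit two ingredients simultaneously: (a) the pointwise decay $\|u(\cdot,t)\|_{L^\infty(dx)}\lesssim (1+t)^{-1/3}$ furnished by Corollary \ref{cor:pointwise}, which forces $|G|\lesssim |u|^\ell\lesssim (1+t)^{-\ell/3}$; and (b) sharp mapping bounds on the Beals--Coifman solution $m_-(y,z;r)$ and on its $y$- and $z$-derivatives coming from the resolvent estimate $\|(1-C_{v_\theta})^{-1}\|\leq (1-\rho)^{-1}$ of \cite[Proposition 2.2]{CLT} together with the estimates on the $\widetilde L$ operator built in Section \ref{sec:dbar}. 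The first step is to record the consequences of (a): combining the pointwise decay with the Painlev\'e--type space profile from Theorem \ref{thm:main1} (effective support of $u$ in $|y|\lesssim t$), one gets $L^p$ bounds $\|u(t)\|_{L^p}^p \lesssim (1+t)^{(1-p)/3}$ which, together with $H^{2,1}$ control coming from the $H^{1,2}$ hypothesis on $r$ and Theorem \ref{thm:biject}, yield the weighted bounds $\|\langle y\rangle^j u^{\ell-k}\partial_y^k u\|_{L^2(dy)}\lesssim(1+t)^{-(\ell-3)/3}$ for the small values of $j,k$ that will actually be needed.

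Next, for the plain $L^2(dz)$ part of \eqref{est:F} I would write $m_-=I+C^-(\mu(w^++w^-))$ and peel off the identity, so that the integrand becomes $G_{21}+\{\text{terms involving }m_--I\}$. Each piece is handled by Plancherel in $y\mapsto z$: the identity piece is the Fourier transform of $u^\ell$ and is bounded by $\|u^\ell\|_{L^2(dy)}$; the remaining pieces are controlled by $\|m_--I\|_{L^2(dz)}$ together with $\|u^\ell\|_{L^\infty}$ or $\|u^\ell\|_{L^1}$, all of which yield $(1+t)^{-(\ell-3)/3}$. For the weighted estimate $\|z^2 F\|_{L^2(dz)}$ I would use the phase $e^{-2iyz}$ to integrate by parts twice in $y$, moving $\partial_y$ onto $m_-^{-1}Gm_-$; the Lax equation \eqref{L} converts $\partial_y m_-$ into $-iz\sigma m_-+Um_-$, so the $z\sigma$ terms combine with the $z^2$ weight on the other side to give a balanced estimate, while the extra $U$ factors only produce more $u^k$ factors (which decay faster, not slower). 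Finally, for $\|\partial_z F\|_{L^2(dz)}$ the derivative hits either the phase (giving a $y$ factor, absorbed by the weighted $L^2$ control of $u^\ell$) or $m_-^{\pm 1}$ itself; for the latter I use the $z$-derivative bound on $(1-C_{v_\theta})^{-1}$ supplied by the $\widetilde L$-operator analysis, together with $\|r\|_{H^{1,2}_{1/2}}\leq\eta$. Summing all contributions produces the constant $C_F^{1,2}(\rho,\eta)$, manifestly monotone in $\rho$ and $\eta$ and uniform in $x,t$.

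For \eqref{est:DF} I would telescope the difference
$$m_{-,2}^{-1}G_2 m_{-,2}-m_{-,1}^{-1}G_1 m_{-,1}=(\Delta m_-^{-1})G_2 m_{-,2}+m_{-,1}^{-1}(\Delta G)m_{-,2}+m_{-,1}^{-1}G_1(\Delta m_-),$$
so every term contains exactly one ``difference'' factor. The Lipschitz continuity of the direct and inverse scattering maps (Theorem \ref{thm:biject}, Proposition \ref{thm:biject'}) gives $\|\Delta u\|_{H^{2,1}}\lesssim \|\Delta r\|_{H^{1,2}}$; combined with the mean value theorem $\Delta(u^\ell)=\ell\tilde u^{\ell-1}\Delta u$ and the $L^\infty$ decay of $u_1,u_2$, this costs one extra power of $(1+t)^{-1/3}$ compared to the analysis above, which accounts for the exponent $(\ell-6)/3$ in \eqref{est:DF} (one lost power from the mean value factor $\tilde u^{\ell-1}$ versus $u^\ell$, carried into both the $L^p$ control of $G$ and the linearized resolvent bound on $\Delta m_-$). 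The same decomposition, weighted and derivative estimates then yield $C_{\Delta F}^{1,2}(\rho,\eta)$ with the claimed monotonicity.

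\textbf{Main obstacle.} The hardest input is the simultaneous control of $\partial_z m_-$ in $L^2(dz)$ with both the $y$-weight and the uniform-in-$t$ constant: naive differentiation of the resolvent identity brings down $\partial_z C_{v_\theta}$ which contains $te^{2i\theta}\partial_z\theta$ factors that grow in $t$. Closing the argument requires that these growing factors be absorbed by the $L^\infty$ decay of $u$ and by the oscillation-cancellation mechanism encoded in the $\widetilde L$ operator of Section \ref{sec:dbar}; making this accounting tight enough to produce $(1+t)^{-(\ell-3)/3}$ rather than a weaker rate is the delicate part.
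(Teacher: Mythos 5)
Your overall architecture coincides with the paper's: the proposition is proved by splitting the $H^{1,2}$ norm into $\|F\|_{L^2}$, $\|\diamond^2F\|_{L^2}$ and $\|\partial_zF\|_{L^2}$, peeling the identity off $m_-^{\pm1}$ to get the four-term decomposition $F=F^{(1)}+F^{(2)}+F^{(3)}+F^{(4)}$ of \eqref{eq:decomF}, applying Plancherel together with the $L^\infty$ decay of $u$ and the resolvent bounds for the unweighted and $z^2$-weighted pieces, invoking the operator $\tilde L=-(ix\operatorname{ad}\sigma+12tz\partial_x)$ for the $\partial_z$ piece, and telescoping the difference so that each term carries exactly one $\Delta$ factor (this is precisely the content of Lemmas \ref{le:FL2}--\ref{le:dDF}). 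The one implementation difference is minor: for $\|\diamond^2F\|_{L^2}$ you integrate by parts twice in $y$ and use the Lax equation, whereas the paper uses the commutation identity for $\diamond^2C^\pm-C^\pm\diamond^2$; these are essentially equivalent.

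There are, however, two concrete gaps. First, the $\partial_zF$ estimate, which is the technical heart of the proposition, is only named rather than executed: you correctly identify that $\partial_z$ on the phase produces the unbounded factor $y+12tz^2$ and that $\tilde L$ is the cure, but the actual mechanism is the identity $\partial_z-\tilde L=L-12tzU$ combined with the resolvent formula \eqref{L-12tzU} for $(L-12tzU)\mu$ and the decomposition of $\tilde LG$ through $L_U+L_U'$; without carrying this out one cannot verify the closing rate (and note that Lemma \ref{le:dF} actually delivers only $(1+t)^{-(\ell-4)/3}$ for this piece, weaker than the $(\ell-3)/3$ you assert for all three components). Second, your accounting for the exponent $(\ell-6)/3$ in \eqref{est:DF} is incorrect: losing ``one extra power of $(1+t)^{-1/3}$'' relative to $(\ell-3)/3$ would give $(\ell-4)/3$, not $(\ell-6)/3$. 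The true source of the additional loss is not the mean value factor $\tilde u^{\ell-1}$ but the degraded $L^\infty$ control of $\Delta m_\pm$: the bound \eqref{est:dm} carries a logarithmic singularity at $\pm z_0$ and a prefactor $z_0^2F(x,t,\eta)$ growing linearly in $x$ and $t$, and it is this growth, entering through $\Delta\partial_zF_{21}$ and $\Delta L_{U,1}$ in Lemma \ref{le:dDF}, that costs the extra powers of $t$ and is the reason the hypothesis $\|r\|_{H^1}<1/2$ is imposed in the first place. Your proposal never engages with this, so as written it cannot justify the stated exponent in \eqref{est:DF} nor the need for the restricted class $H^{1,2}_{\scriptscriptstyle 1/2}$.
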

\begin{proof}
Due to the fact that $\|f\|_{H^{1,2}} = \|f\|_{L^2} + \|\diamond^2f\|_{L^2} +\|\partial_x f\|_{L^2}$, we estimate each norm of the term $F$ and $\Delta F$ separately. \eqref{est:F} follows from Lemma \ref{le:FL2}, Lemma 
\ref{le:zF2} and Lemma \ref{le:dF}. \eqref{est:DF} follows from Lemma \ref{le:DF}, Lemma \ref{le:DzF} and Lemma \ref{le:dDF}.
\end{proof}
\begin{lemma}
\label{le:FL2}
\begin{equation}
\|F\|_{L^2(dz)} \leq \frac{C_{F}^{0,0}(\rho, \eta)}{(1+t)^{(\ell-2)/3}}
    \end{equation}
    where the constant $C^{0,0}_F(\rho, \eta)$ is uniform in $x$ and $t$ and are monotonic in $\rho$ and $\eta$.
\end{lemma}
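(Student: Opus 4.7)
My plan is to combine Plancherel's theorem in the $y$-variable with the uniform pointwise decay $\norm{u(\cdot,t)}{L^\infty(dy)} \leq c(1+t)^{-1/3}$ from Corollary \ref{cor:pointwise} together with uniform $L^\infty$ bounds on the Beals--Coifman solution $m_\pm$ coming from the small-norm assumption $\norm{r}{L^\infty}\leq \rho<1/2$.

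First, since $G(u)=u^\ell\,\sigma_1$ with $\sigma_1 := \offdiagmat{1}{1}$, the $(2,1)$-entry of the integrand in \eqref{F} factors as $e^{-2i\theta}u^\ell(y,t)\,g(y,z,t)$, where $g(y,z,t):=[m_-^{-1}\sigma_1 m_-]_{21}$. The small-norm resolvent estimate $\norm{(1-C_{v_\theta})^{-1}}{L^2(dz)}\leq(1-\rho)^{-1}$ from \cite[Proposition~2.2]{CLT}, together with the Beals--Coifman representation $m_\pm=I+C^\pm(\mu(w_\theta^{+}+w_\theta^{-}))$, forces $g$ to be uniformly bounded in $(y,z)$ by a constant depending only on $\rho$.

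Next, I would decompose $F=F_0+F_1$ along the splitting $m_-=I+(m_--I)$. For the leading piece one has the clean identity $F_0(z)=e^{-8itz^3}\widehat{u^\ell}(2z)$, so Plancherel yields $\norm{F_0}{L^2(dz)}\leq c\,\norm{u^\ell}{L^2(dy)}$. The interpolation $\norm{u^\ell}{L^2(dy)}^2\leq \norm{u}{L^\infty(dy)}^{2(\ell-1)}\norm{u}{L^2(dy)}^2$, combined with Corollary \ref{cor:pointwise} and the uniform bound on $\norm{u(\cdot,t)}{L^2(dy)}$ inherited via $\mathcal{R}^{-1}$ from Theorem \ref{thm:biject}, then produces $\norm{F_0}{L^2(dz)}\leq c(\rho,\eta)(1+t)^{-(\ell-1)/3}$, which dominates the required rate $(1+t)^{-(\ell-2)/3}$. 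The correction $F_1$ is treated by expanding $m_--I$ through the Beals--Coifman Neumann series $\mu=(1-C_{v_\theta})^{-1}I$ and bounding the resulting bilinear expression in $(u^\ell,w_\theta^\pm)$ by Plancherel in $y$ applied to $u^\ell$ together with the $L^2(dz)$ resolvent bound applied to the Cauchy-projection factors of $w_\theta^\pm$. This again reduces to a constant multiple of $\norm{u^\ell}{L^2(dy)}$ times a factor monotone in $\rho$ and $\eta$.

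The main obstacle will be the correction term $F_1$: because $m_\pm(y,z,t)$ depends non-trivially on the spectral variable $z$, Plancherel in $y$ cannot be applied naively to the full integrand. This is resolved by expanding through the Beals--Coifman Neumann series and invoking the resolvent estimate of \cite[Proposition~2.2]{CLT} and Cauchy-projection bounds, which decouple the $y$ and $z$ dependencies at the cost of an explicit $(1-\rho)^{-1}$ factor and polynomial factors in $\norm{r}{H^{1,2}}\leq\eta$. Monotonicity of the resulting constant $C^{0,0}_F(\rho,\eta)$ in both $\rho$ and $\eta$ is then immediate.
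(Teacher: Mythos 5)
Your overall strategy (split off the identity part of $m_-^{-1}Gm_-$, use Plancherel on the leading term, treat the rest as a correction) is the same as the paper's, which writes $F=F^{(1)}+F^{(2)}+F^{(3)}+F^{(4)}$ according to $m_-^{-1}Gm_-=G+(m_-^{-1}-I)G+G(m_--I)+(m_-^{-1}-I)G(m_--I)$ and estimates the leading term exactly as you do, getting $\|u\|_{L^2}\|u^{\ell-1}\|_{L^\infty}\lesssim t^{-(\ell-1)/3}$. However, your treatment of the correction term contains a genuine gap. You claim that after expanding $m_--I$ through the Beals--Coifman Neumann series the $y$- and $z$-dependencies "decouple" and the correction again reduces to a constant multiple of $\|u^\ell\|_{L^2(dy)}$. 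This is not justified: each term of the Neumann series still depends jointly on $y$ and $z$ (through $\mu(y,z)$ and through $\theta=yz+4tz^3$ inside $w_\theta^{\pm}$), so no Plancherel identity in $y$ is available for the correction. The correct and standard tool here is Minkowski's integral inequality, which yields
\begin{equation*}
\left\|\int e^{-2iyz}u^\ell(y)\bigl(m_-(z;y)-I\bigr)\,dy\right\|_{L^2(dz)}\;\leq\;\|u^\ell\|_{L^1(dy)}\,\|m_--I\|_{L^2(dz)\otimes L^\infty(dx)},
\end{equation*}
and analogously an $L^4(dz)$ mixed-norm bound for the quadratic term $(m_-^{-1}-I)G(m_--I)$. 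Since $\|u^\ell\|_{L^1}\lesssim\|u\|_{L^2}^2\|u\|_{L^\infty}^{\ell-2}\lesssim t^{-(\ell-2)/3}$, this is precisely where the stated rate $(1+t)^{-(\ell-2)/3}$ comes from; your claimed $\|u^\ell\|_{L^2}$ reduction would give the strictly better rate $t^{-(\ell-1)/3}$ for every term, which should have been a warning sign.

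A second, smaller issue: you assert that the $L^2$ resolvent bound $\|(1-C_{v_\theta})^{-1}\|_{L^2}\leq(1-\rho)^{-1}$ "forces" $g=[m_-^{-1}\sigma_1 m_-]_{21}$ to be uniformly bounded in $(y,z)$. It does not: the resolvent estimate controls $\mu-I$ only in $L^2(dz)$, and a Cauchy projection of an $L^2$ function need not be bounded. The uniform $L^\infty$ bound on $m_\pm$ that the paper actually uses is the content of Proposition \ref{prop:mbound1} and Lemma \ref{lm:mbound2}, which are obtained from the $\overline{\partial}$-steepest-descent analysis of Section \ref{sec:dbar}, not from the resolvent estimate alone. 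With these two repairs (Minkowski in place of the claimed decoupling, and the correct source for the $L^\infty$ bound on $m_\pm$) your argument coincides with the paper's proof.
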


\begin{proof}
Similar to \cite[(6.9)]{DZ-2}, we decompose $F$ into the following parts:
\begin{align}
\label{eq:decomF}
    F&=P_{12}\int e^{-2i \theta}G dy + P_{12}\int e^{-2i \theta}(m_-^{-1}-I)G dy+P_{12}\int e^{-2i \theta} G (m_--I) dy + P_{12}\int e^{-2i \theta}(m_-^{-1}-I)G (m_--I) dy\\
    \nonumber
    &=F^{(1)}+F^{(2)}+F^{(3)}+F^{(4)}.
\end{align}

By the Plancherel's identity and Minkowski's inequality, and the estimate of $\|u\|_{L^{\infty}} $ given by Lemma \ref{lm:mbound2} and the resolvent bound given in Proposition \ref{prop:new apriori}, we deduce that

\begin{align*}
\|F\|_{L^2(dz)} & \leq  c_1 \norm{u}{L^2(dx)}\|u^{\ell-1}\|_{L^\infty(dx)} + 2c_2 \|u^\ell\|_{L^1(dx)}\|m_--I\|_{L^2(dz) \otimes L^ \infty(dx)} + c_3\|u^\ell\|_{L^1(dx)}\|m_-^{-1}-I\|_{L^4(dz) \otimes L^ \infty(dx)}^2 \\
&\leq c t^{-(\ell-2)/3}.
\end{align*}
We combine all constants $c$ in the proof into $C^{0,0}_F(\rho.\eta)$ and conclude the proof.
\end{proof}

\begin{lemma}
\label{le:zF2}
\begin{equation}
    \|\diamond^2 F\|_{L^2(dz)} \leq =\frac{C^{0,2}_{F}(\rho,\eta)}{(1+t)^{(\ell-2)/3}}
    \end{equation}
    where the constant $C^{0,2}_F(\rho, \eta)$ is uniform in $x$ and $t$ and are monotonic in $\rho$ and $\eta$.
\end{lemma}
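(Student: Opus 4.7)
The strategy is to adapt the four-term decomposition \eqref{eq:decomF} from Lemma \ref{le:FL2} and to absorb the weight $z^2$ via integration by parts in $y$ against the oscillatory kernel. The key identity is $z^2 e^{-2i\theta} = -\tfrac{1}{4}\partial_y^2 e^{-2i\theta}$, which follows from $\partial_y e^{-2i\theta} = -2iz\, e^{-2i\theta}$. Integrating by parts twice in $y$ for each summand $F^{(j)}$ (the boundary contributions vanish since $u\in H^{2,1}$ and $m_\pm-I$ decay at $y=\pm\infty$) yields
$$
z^2 F^{(j)} \;=\; -\tfrac{1}{4}\, P_{12}\!\int e^{-2i\theta}\, \partial_y^2\!\left[(m_-^{-1}-I)^{a_j}\,G\,(m_--I)^{b_j}\right] dy, \qquad (a_j,b_j)\in\{0,1\}^2.
$$
Each piece is then bounded by Plancherel in $z$ followed by Minkowski in $y$, following the template of Lemma \ref{le:FL2}.

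For $F^{(1)}$, one has $\|z^2 F^{(1)}\|_{L^2(dz)} \lesssim \|(u^\ell)''\|_{L^2(dy)}$ by Plancherel. Leibniz gives $(u^\ell)'' = \ell u^{\ell-1} u_{yy} + \ell(\ell-1) u^{\ell-2} u_y^2$; bounding the first term by $\|u\|_{L^\infty}^{\ell-1}\|u_{yy}\|_{L^2}$ and the second, via Gagliardo--Nirenberg ($\|u_y\|_{L^4}^2\lesssim \|u_y\|_{L^2}\|u_{yy}\|_{L^2}$), by $\|u\|_{L^\infty}^{\ell-2}\|u_y\|_{L^2}\|u_{yy}\|_{L^2}$, the slower of the two decay rates is $(1+t)^{-(\ell-2)/3}$, using Corollary \ref{cor:pointwise} for the $L^\infty$ decay and the Lipschitz inverse map of Theorem \ref{thm:biject} applied to $r(t)$ (which stays in $H^{1,2}$ with norm $\leq 2\eta$ by Theorem \ref{thm:r}) for uniform $H^2$-control of $u$.

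For $F^{(2)}$, $F^{(3)}$, $F^{(4)}$, the Leibniz expansion distributes up to two $y$-derivatives among $G$ and the factors $m_-^{\pm 1}-I$. The mixed-norm bounds on $m_-^{\pm 1}-I$ in $L^p(dz)\otimes L^\infty(dy)$, and on $\partial_y(m_-^{\pm 1}-I)$, follow from the Beals--Coifman representation $m_-=\mu v_-$, the resolvent estimate of \cite[Proposition 2.2]{CLT} (recorded as Proposition \ref{prop:new apriori}), and the $\widetilde L$-operator bound of Section \ref{sec:dbar}. Since every term of the expansion carries at least $\|u\|_{L^\infty}^{\ell-2}$ together with $L^2$-integrable derivatives of $u$, Corollary \ref{cor:pointwise} supplies the desired decay $(1+t)^{-(\ell-2)/3}$.

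The main obstacle is the sub-case in which both derivatives fall on a single factor $m_\pm-I$: differentiating via the Lax equation $\partial_y m_\pm = -iz\,\ad\sigma\, m_\pm + U m_\pm$ introduces two extra powers of $z$, but these are reabsorbed by a further pair of integrations by parts against the oscillator $e^{-2i\theta}$, while the cross terms involving $U m_\pm$ each contain an additional factor of $u$ and are thus of lower order. Summing the four contributions yields the announced bound $\|z^2 F\|_{L^2(dz)} \leq C^{0,2}_F(\rho,\eta)(1+t)^{-(\ell-2)/3}$, with the constant monotonic in $\rho$ and $\eta$ because each of the ingredient estimates is.
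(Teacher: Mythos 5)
Your treatment of $F^{(1)}$ is exactly the paper's (Plancherel converts $z^2$ into $\partial_y^2$ acting on $u^\ell$, and $\|u\|_{L^\infty}^{\ell-2}$ supplies the decay), but for $F^{(2)}$--$F^{(4)}$ your scheme has a genuine gap in precisely the case you flag as the ``main obstacle.'' When both $y$-derivatives fall on $m_--I$, the Lax equation gives
\begin{equation*}
\partial_y^2 m_- = -z^2(\ad\sigma)^2(m_--I) + \bigl(\text{terms with at least one factor of } U\bigr),
\end{equation*}
and since $\tfrac14(\ad\sigma)^2$ is the projection onto off-diagonal entries, the leading contribution of $-\tfrac14\int e^{-2i\theta}G\,\partial_y^2 m_-\,dy$ is $\int e^{-2i\theta}z^2\,G\cdot\mathrm{offdiag}(m_--I)\,dy$ --- the very quantity you started with. ``A further pair of integrations by parts'' literally reverses the step you just took; it produces an identity, not an estimate. (One could try to exploit that $P_{12}$ of $G(m_--I)$ only sees the diagonal part of $m_--I$, but you do not invoke this, and it still leaves you needing a bound on $z^2$ times the diagonal part, which your argument does not supply.) Relatedly, the mixed-norm bounds you assert for $\partial_y(m_-^{\pm1}-I)$ in $L^p(dz)\otimes L^\infty(dy)$ do not ``follow from'' Proposition \ref{prop:new apriori}: the term $-iz\,\ad\sigma\,(m_--I)$ carries an unbounded weight in $z$ that must itself be paid for.

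The paper closes this loop differently: it never differentiates $m_\pm$ in $y$. Instead it inserts the Beals--Coifman representation $m_--I=C^-\bigl(\mu(w_\theta^++w_\theta^-)\bigr)$ and uses the commutator identity
\begin{equation*}
\diamond^2 C^{\pm}-C^{\pm}\diamond^2=-\frac{\diamond}{2\pi i}\int\;-\;\frac{1}{2\pi i}\int\diamond\,,
\end{equation*}
which transfers the weight $z^2$ onto $w_\theta^{\pm}\propto r(z)e^{\pm2i\theta}$; the resulting factor $\diamond^2 r$ (resp.\ $\diamond r$) is then controlled by $\|r\|_{L^{2,2}}$, i.e.\ the weight is absorbed by the weighted norm of the reflection coefficient rather than by smoothness of $u$ or extra oscillation in $y$. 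The residual single power of $z$ that survives in the cross term ($I_2$ in the paper's notation) is the only place where Plancherel in $y$ is used, producing $\|(GU)_x\|_{L^2}$. If you want to salvage your route, you would have to carry out essentially this same bookkeeping for the hard terms, so the integration-by-parts device buys you nothing beyond $F^{(1)}$.
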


\begin{proof}
We use the decomposition of $F$ given by \eqref{eq:decomF}.
By using the estimate of $\|u_{xx}\|_{L^2(dx)}$ given by \eqref{L2 uxx}, we get:
\begin{eqnarray*}
    \norm{\diamond^2 F^{(1)}}{L^2(dz)} =c \|\partial^2_{xx} u^\ell(x)\|_{L^2(dx)} \leq c \|\partial^2_{xx} u(x)\|_{L^2(dx)}\|u^{\ell-1}\|_{L^\infty(dx)}\leq c  (1+t)^{-(\ell-1)/3}.
\end{eqnarray*}

The estimate of $z^2F^{(2)}$ is the same as $z^2F^{(3)}$. 
We first use the following commutating relation:
\begin{align}
   \diamond^2 C^{ \pm}-C^{ \pm} \diamond^2&=-\frac{\diamond}{2 \pi i}\int-\left(\frac{1}{2 \pi i} \int\diamond \right).
   \end{align}
Thus we have $z^2F^{(3)}=I_1+I_2+I_3$ with
\begin{align*}
    \norm{I_{1}}{L^2(dz)} &= \left \Vert \int e^{-2i \theta}  G \left[C^-\diamond^2 \mu (w_++w_-)\right]dy \right \Vert_{L^2(dz)}\\
    &\leq c\|G\|_{L^1(dx)} \|\left[C^-\diamond^2 \mu (w_++w_-)\right]\|_{L^2(dz) \otimes L^\infty(dx)} \\
    & \leq c \norm{u}{L^2(dx)}\norm{u}{L^\infty(dx)}^{\ell-1}\norm{\mu}{L^\infty(dz)}\norm{r}{L^{2,1}(dz)}\\
    & \leq c t^{-(\ell-1)/3}.
    \end{align*}
\begin{align*}
   \norm{I_{2}}{L^2(dz)} = \left \Vert \int e^{-2i \theta} z GUdy \right \Vert_{L^2(dz)} \leq&c\|(GU)_x\|_{L^2(dx)} \\
    \leq & c \|u_x\|_{L^2(dx)}\norm{u^{\ell}}{L^\infty(dx)}\\
   \leq & c t^{-\ell/3}.
\end{align*}
\begin{align*}
   \norm{I_{3}}{L^2(dz)} &= \left \Vert \int e^{-2i \theta} G\left(\frac{1}{2 \pi i} \int\diamond \mu (w_++w_-) \right)dy \right \Vert_{L^2(dz)}\\
   &\leq c\|u\|_{L^2(dx)} \norm{\int\diamond \mu (w_++w_-) }{L^\infty(dx)} \|u^{\ell-1}\|_{ L^\infty(dx)} \\
   & \leq c \|u\|_{L^2(dx)} \|\mu-I\|_{L^2(dz)\otimes{L^\infty(dx)}}\norm{r}{L^{2,1}(dz)}\|u\|_{ L^\infty(dx)}^{\ell-1}\\
   &\leq c t^{-(\ell-1)/3}.
   \end{align*}
Finally, we notice that
\begin{align}
    \norm{C^-\left[\diamond \mu (w_++w_-)\right]}{L^4(dz)}&\leq \norm{\diamond \mu (w_++w_-)}{L^4(dz)}\\
    \nonumber
    &\leq \norm{\mu}{L^\infty(dz)}\norm{\diamond r}{L^4(dz)}\\
    \nonumber
    &\leq  \norm{\mu}{L^\infty(dz)}\norm{ r}{L^{2,2}(dz)}\norm{ r}{L^\infty(dz)}
\end{align}
and deduce
\begin{align*}
    \norm{\diamond^2 F^{(4)}}{L^2(dz)} &=\left \Vert \diamond^2 \int e^{-i\theta}(m_-^{-1}-I)G(m_--I)dy \right \Vert_{L^2(dz)}\\
    &\leq \norm{\int e^{-2i\theta }\left( U + C^-\left[\diamond \mu (w_++w_-)\right] \right)G\left( U + C^-\left[\diamond \mu (w_++w_-)\right] \right)}{L^2(dz)} \\
    &\leq \|UGU\|_{L^2(dx)}+2\norm{\int e^{-2i\theta} UG C^-\left[\diamond \mu (w_++w_-)\right] }{L^2(dz)} \\
    & \quad + \norm{\int e^{-2i\theta} C^-\left[\diamond \mu (w_++w_-)\right]G C^-\left[\diamond \mu (w_++w_-)\right] }{L^2(dz)} \\
    &\leq c \norm{u^{\ell+2}}{L^2(dx)} +c\norm{u^{\ell+1}}{L^1(dx)}\norm{\mu-I}{L^\infty(dx)\otimes{L^\infty(dx)}}\norm{r}{L^{2,1}(dx)}\\
    &\quad +  c\norm{u^{\ell}}{L^1(dx)}\norm{C^-\left[\diamond \mu (w_++w_-)\right]}{L^4(dz)}^2\\
    &\leq c t^{-(\ell-2)/3}.
\end{align*}
We combine all the constants $c$ in the proof into $C^{0,2}_F(\rho.\eta)$ and conclude the proof.
\end{proof}

\begin{lemma}
\label{le:dF}
    \begin{equation}
        \|\partial_z F\|_{L^2(dz)} \leq \frac{C^{1,0}(\rho,\eta)}{(1+t)^{(\ell-4)/3}} 
    \end{equation}
    where the constant $C^{1,0}_{\Delta F}(\rho, \eta)$ is uniform in $x$ and $t$ and are monotonic in $\rho$ and $\eta$.
\end{lemma}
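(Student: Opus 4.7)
The plan is to follow the decomposition $F=F^{(1)}+F^{(2)}+F^{(3)}+F^{(4)}$ of \eqref{eq:decomF} and bound $\|\partial_z F^{(k)}\|_{L^2(dz)}$ for each $k$, in parallel with Lemma \ref{le:FL2} and Lemma \ref{le:zF2}. The new difficulty is that $\partial_z$ falling on the oscillatory factor produces
$$\partial_z e^{-2i\theta}=-2i(y+12tz^2)\,e^{-2i\theta},$$
in which neither the spatial moment $y$ nor the phase prefactor $12tz^2$ is bounded \emph{a priori}. I would dispose of both via integration by parts in $y$, exploiting the identities
$$-2iy\,e^{-2iyz}=\partial_y e^{-2iyz}\qquad\text{and}\qquad -4z^2\,e^{-2iyz}=\partial_y^2 e^{-2iyz},$$
which trade the $y$-factor for one $y$-derivative on the amplitude and the $12tz^2$-factor for $3t\,\partial_y^2$ on the amplitude. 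The $t$-loss generated by the phase is then compensated by two extra powers of $\|u\|_{L^\infty}\lesssim t^{-1/3}$ coming from differentiating $u^\ell$.

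For $\partial_z F^{(1)}$ the above reduces, after Plancherel, to $L^2(dy)$-norms of $u^{\ell-1}u_y$ and $t\bigl(u^{\ell-1}u_{yy}+u^{\ell-2}u_y^2\bigr)$; Corollary \ref{cor:pointwise}, the $H^2$-bound on $u$ provided by Theorem \ref{thm:r} together with \eqref{L2 uxx}, and a Gagliardo--Nirenberg estimate on $\|u_y\|_{L^\infty}$, combine to give the target $(1+t)^{-(\ell-4)/3}$. For $\partial_z F^{(k)}$ with $k=2,3,4$ I would differentiate the Beals--Coifman representation $m_\pm-I=C^\pm\bigl(\mu(w_\theta^++w_\theta^-)\bigr)$, $(1-C_{v_\theta})\mu=I$. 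The $z$-derivative can be commuted past $C^\pm$ via the Cauchy commutator trick already used in Lemma \ref{le:zF2}; the resolvent bound of Proposition \ref{prop:new apriori} controls $\partial_z\mu$; and the $z$-derivative of $w_\theta^\pm=r(z)e^{\pm 2i\theta}E_{ij}$ contributes $r'(z)\in L^2(dz)$ by the $H^{1,2}$ assumption \eqref{eq:assumption}, plus a $(y+12tz^2)$ phase piece again neutralized by the IBP above. The factors $\|m_\pm-I\|_{L^2(dz)\otimes L^\infty(dy)}$ needed in $F^{(2)},F^{(3)},F^{(4)}$ are supplied by Lemma \ref{lm:mbound2}.

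The main obstacle is the $12tz^2$ prefactor: a pointwise bound on $|z|^2$ is not available, and only the IBP reduction to $3t\,\partial_y^2$ on the amplitude allows the $t$-factor to be absorbed. Care is required because, inside $F^{(2)}$--$F^{(4)}$, a $y$-derivative can land on $m_\pm^{\pm 1}$ and, via the Lax equation $\partial_y m_\pm=-iz[\sigma,m_\pm]+Um_\pm$, reintroduce a $z$-factor; this must be tracked so as not to spoil the $L^2(dz)$ integrability, typically by absorbing the extra $z$ into the $r'$ contribution or into a second commutator with $C^\pm$. Boundary terms at $y=\pm\infty$ vanish thanks to the decay of $u\in H^{2,1}$ and the normalization $m_\pm\to I$. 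With the assumption $\ell>9$, every subterm closes comfortably at the claimed rate $(1+t)^{-(\ell-4)/3}$, the implicit constant depending only on $\rho$ and $\eta$ through the uniform bounds built into \eqref{eq:assumption}.
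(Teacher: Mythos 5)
Your proposal contains a genuine gap at its central step. You propose to dispose of the spatial factor $y$ produced by $\partial_z e^{-2i\theta}=-2i(y+12tz^2)e^{-2i\theta}$ via the identity $-2iy\,e^{-2iyz}=\partial_y e^{-2iyz}$, but this identity is false: $\partial_y e^{-2iyz}=-2iz\,e^{-2iyz}$, so differentiating in $y$ produces a factor of $z$, not of $y$ (the true identity $-2iy\,e^{-2iyz}=\partial_z e^{-2iyz}$ is circular). The factor $y$ is a genuine spatial weight and cannot be integrated by parts away in $y$; controlling it directly would require uniform-in-time weighted bounds such as $\|y(m_\pm-I)\|$, which are not available (indeed $\|u(t)\|_{L^{2,1}}$ grows linearly in $t$, cf.\ the proof of Lemma \ref{lm:mbound2}). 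Your second identity $-4z^2e^{-2iyz}=\partial_y^2e^{-2iyz}$ is correct, but the resulting two $y$-derivatives land on $m_\pm^{\pm1}$ and, via $\partial_y m_\pm=-iz[\sigma,m_\pm]+Um_\pm$, reintroduce exactly the $z^2$ factor you were trying to remove; you flag this circularity but offer no mechanism to break it.

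The paper resolves both problems simultaneously with the Deift--Zhou operator $\tilde L=-ix\operatorname{ad}\sigma-12tz\partial_x$: the $ix\operatorname{ad}\sigma$ piece pairs the spatial weight with the commutator structure of the conjugation $e^{i\theta\operatorname{ad}\sigma}$ so that no weight survives, and $\partial_z F$ is then decomposed not along $F^{(1)}+\cdots+F^{(4)}$ but as $\partial_zF_1+\partial_zF_2+\partial_zF_3$ according to which of $m_-^{-1}$, $G$, $m_-$ receives $(\partial_z-\tilde L)$ or $\tilde L$. The pieces are controlled through the identity $(\partial_z-\tilde L)\mu=(L-12tzU)\mu=(1-C_{w_\theta})^{-1}C_{w'_\theta}\mu+\cdots$ (equation \eqref{L-12tzU}) and the representation of $\tilde LU=L_U+L_U'$ from \cite[p.~228]{DZ-2}, together with the resolvent and $L^\infty$ bounds of Section \ref{sec:dbar}. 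Without some version of this $\tilde L$ machinery (or an equivalent commutator argument), your outline does not close.
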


\begin{proof}
Define the operator 
\begin{equation}
-\tilde{L}=  ix \text{ad} \sigma + 12tz \partial_x,
\end{equation}
 then we have $\partial_z - \tilde{L} = L - 12tzU$, where $L = \partial_z - i(x + 12 tz^2) \text{ad} \sigma$. 
 We proceed to estimate $(L - 12tzQ)\mu$.

\begin{align*}
    -12itz^2 \text{ad} \sigma C_{w_\theta} \mu & = 12tzU - 12itz \text{ad} \sigma C^+(\diamond \mu w_\theta^-) - 12itz \text{ad} \sigma C^-(\diamond \mu w_\theta^+)\\
    & = 12tzU + 12it\frac{1}{2\pi i} \text{ad} \sigma \int \diamond \mu (w_\theta^- + w_\theta^+) - 12it \text{ad}\sigma C^+(\diamond^2 \mu w_\theta^-) - 12it \text{ad}\sigma C^-(\diamond^2 \mu w_\theta^+),
\end{align*}
\begin{align*}
    -12tzU C_{w_\theta} \mu & = 12tU \frac{1}{2\pi i} \int \mu (w_\theta^- + w_\theta^+) - 12tU C^+(\diamond \mu w_\theta^-) - 12tU C^-(\diamond \mu w_\theta^+),
\end{align*}
\begin{align*}
    (L - 12tzU)\mu = & (L - 12tzU)I + (L - 12tzU) C_{w_\theta} \mu \\
     = & - 12tzU + 12tzU + C_{w_\theta} (L-12tzU) \mu + C_{\omega'_{\theta}} \mu\\
    & + 12it\frac{1}{2\pi i} \text{ad} \sigma \int \diamond \mu (w_\theta^- + w_\theta^+) + 12tU \frac{1}{2\pi i} \int \mu (w_\theta^- + w_\theta^+)\\
     = & C_{w_\theta} (L-12tzU) \mu + C_{\omega'_{\theta}} \mu\\
    & + 12it\frac{1}{2\pi i} \text{ad} \sigma \int \diamond \mu (w_\theta^- + w_\theta^+) + 12tU \frac{1}{2\pi i} \int \mu (w_\theta^- + w_\theta^+).
\end{align*}
We then deduce that 
\begin{align}
\label{L-12tzU}
     (L - 12tzU)\mu &= \left(1-C_{w_\theta}\right)^{-1}  \left[ C_{\omega'_{\theta}}\mu +  12it\frac{1}{2\pi i} \text{ad} \sigma \int \diamond \mu (w_\theta^- + w_\theta^+) \mu\right.\\
     \nonumber
     &\left. \quad + 12tU \frac{1}{2\pi i} \int \mu (w_\theta^- + w_\theta^+)\right]\\
     \nonumber
   &=\left(1-C_{w_\theta}\right)^{-1} C_{\omega'_{\theta}}\mu +\left( 12it\frac{1}{2\pi i} \text{ad} \sigma \int \diamond \mu (w_\theta^- + w_\theta^+)  + 12tU \frac{1}{2\pi i} \int \mu (w_\theta^- + w_\theta^+) \right)\mu.
\end{align}
Moreover, it is easy to see that
\begin{align}
     (\partial_z-\tilde{L})m_- &=(\partial_z-\tilde{L})\mu v_\theta^-\\
      \nonumber
                 &=((\partial_z-\tilde{L})\mu) v_\theta^-+\mu (v^-)'_\theta ,\\
      (\partial_z-\tilde{L})m_-^{-1} &=(\partial_z-\tilde{L})\left(v_\theta^-\right)^{-1} \mu^{-1}\\
      \nonumber
                 &=\left( v_\theta^-\right)^{-1}((\partial_z-\tilde{L})\mu^{-1}) +\left[ (v^-)'_\theta \right]^{-1} \mu^{-1}.          
\end{align}
We then proceed to decompose $\partial_z F$ into:
\begin{align}
\label{eq: parzF}
    \partial_z F &=c_1 \int e^{i \theta \mathrm{ad} \sigma}(\partial_z - \tilde{L})m_-^{-1} G m_-dy+c_2\int e^{i \theta \mathrm{ad} \sigma}m_-^{-1}G (\partial_z - \tilde{L})m_- dy - c_3\int e^{i \theta \mathrm{ad} \sigma}m_-^{-1} \tilde{L}G m_- dy \\
    \nonumber
    &=  \partial_z F_1+  \partial_z F_2+ \partial_z F_3.
\end{align}
We first consider $\partial_z F_2$. For $ \partial_z F_2$ we further decompose the expression $ \partial_z F_2$ into:
\begin{align}
    \partial_z F_2 &=\int e^{i \theta \mathrm{ad} \sigma}m_-^{-1} G (\partial_z - \tilde{L})m_- dy\\
    \nonumber
    &=\int e^{i \theta \mathrm{ad} \sigma}m_-^{-1} G  \left((\partial_z - \tilde{L}) \mu \right)v_\theta^- dy + \int e^{-i \theta \mathrm{ad} \sigma}m_-^{-1} G  \mu (v^-)'_\theta dy \\
    \nonumber
    &=\int e^{i \theta \mathrm{ad} \sigma} m_-^{-1} G\left[\left(1-C_{w_\theta}\right)^{-1} C_{\omega'_{\theta}}\mu\right] v_\theta^-dy+ \frac{6t}{\pi } \int e^{-i \theta \mathrm{ad} \sigma} G\left(\text{ad} \sigma \int \diamond \mu (w_\theta^- + w_\theta^+)\right)\mu v_\theta^- dy \\
    \nonumber
    &\quad  + \frac{6t}{\pi } \int e^{i \theta \mathrm{ad} \sigma} (m_-^{-1} -I)G\left(\text{ad} \sigma \int \diamond \mu (w_\theta^- + w_\theta^+)\right)\mu v_\theta^- dy  \\
    \nonumber
    &\quad + \frac{6t}{\pi i } \int e^{i \theta \mathrm{ad} \sigma} (m_-^{-1} -I)G U \left(\text{ad} \sigma \int \diamond \mu (w_\theta^- + w_\theta^+)\right)\mu v_\theta^- dy\\
    \nonumber
    &\quad  + \frac{6t}{\pi i } \int e^{i \theta \mathrm{ad} \sigma} G U \left(\text{ad} \sigma \int \diamond \mu (w_\theta^- + w_\theta^+)\right)\mu v_\theta^- dy +  \int e^{i \theta \mathrm{ad} \sigma}m_-^{-1} G  \mu (v^-)'_\theta dy \\
    \nonumber
    &= \partial_z F_{21}+ \partial_z F_{22}+ \partial_z F_{23}+ \partial_z F_{24}+ \partial_z F_{25} + \partial_z F_{26}.
\end{align}
In this case,
\begin{align*}
    \norm{ \partial_z F_{21}}{L^2(dz)} &\leq c \norm{\left(1-C_{w_\theta}\right)^{-1} C_{\omega'_{\theta}}\mu}{L^2(dz)} \norm{u^\ell}{L^\infty(dx)}\norm{m_-}{L^\infty(dz)}\norm{r}{L^\infty(dz)}\\
    &\leq c t^{-\ell/3},\\
\norm{ \partial_z F_{22}}{L^2(dz)} &\leq c t \norm{  \int \diamond \mu (w_\theta^- + w_\theta^+)}{L^2(dx)} \norm{\mu}{L^\infty(dz)} \norm{r}{L^\infty(dz)}\norm{u^{\ell}}{L^\infty(dx)}\\
    &\leq c t^{-(\ell-3)/3},\\
 \norm{ \partial_z F_{23}}{L^2(dz)} &\leq  c t \norm{  \int \diamond \mu (w_\theta^- + w_\theta^+)}{L^\infty(dz)} \norm{m_- -I}{L^2(dz)} \norm{\mu}{L^\infty(dz)}\norm{r}{L^\infty(dz)}\norm{u^{\ell}}{L^\infty(dx)}\\
    &\leq c t^{-(\ell-3)/3},\\
\norm{ \partial_z F_{24}}{L^2(dz)} &\leq  c t \norm{  \int \diamond \mu (w_\theta^- + w_\theta^+)}{L^\infty(dz)} \norm{m_- -I}{L^2(dz)}\norm{\mu}{L^\infty(dz)}\norm{r}{L^\infty(dz)}\norm{u^{\ell+1}}{L^\infty(dx)}\\
    &\leq c t^{-(\ell-2)/3},\\
\norm{ \partial_z F_{25}}{L^2(dz)} &\leq c t \norm{  \int \diamond \mu (w_\theta^- + w_\theta^+)}{L^2(dx)} \norm{\mu}{L^\infty(dz)}\norm{r}{L^\infty(dz)}\norm{u^{\ell+1}}{L^\infty(dx)}\\
    &\leq c t^{-(\ell-2)/3},\\
\norm{ \partial_z F_{26}}{L^2(dz)} &\leq c  \norm{  \int \diamond \mu (w_\theta^- + w_\theta^+)}{L^\infty(dx)} \norm{\mu}{L^\infty(dz)}\norm{r}{H^1(dz)}\norm{u^{\ell}}{L^\infty(dx)}\\
    &\leq c t^{-\ell/3}.
\end{align*}
The estimate of $\partial_z F_1$ is similar to that of $\partial_z F_2$. For the estimate of $\partial_z F_3$, we first introduce from \cite[p.228]{DZ-2} the following expression:
\begin{align*}
\tilde{L} U & =\int\left((\tilde{L} \mu) w_\theta+\mu \tilde{L} w_\theta\right)=\int\left((\tilde{L} \mu) w_\theta+\mu\left(\partial_z e^{i \theta \mathrm{ad} \sigma}\right) w\right)=\int\left(\left(\left(-\partial_z+\tilde{L}\right) \mu\right) w_\theta-\mu w_\theta^{\prime}\right) \\
& =-\int\left(((L-12z t U) \mu) w_\theta-(\mu-I) w_\theta^{\prime}\right)+\int w_\theta^{\prime} \\
& \equiv L_U+L_U^{\prime}.
\end{align*}
Notice that by \eqref{L-12tzU}, we have for $L_U$:
\begin{align}
    L_U&=-\int \left[\left(1-C_{w_\theta}\right)^{-1} C_{\omega'_{\theta}}\mu \right]w_\theta dz\\
    \nonumber
    &
   \quad  + \int\left( 12it\frac{1}{2\pi i} \text{ad} \sigma \int \diamond \mu (w_\theta^- + w_\theta^+)  + 12tU \frac{1}{2\pi i} \int \mu (w_\theta^- + w_\theta^+) \right)\mu w_\theta dz\\
    \nonumber
    & \quad + \int (\mu-I)w'_\theta dz\\
    \nonumber
    &=L_{U,1} + L_{U,2} +L_{U,3}.
\end{align}
It is then easy to deduce that 
\begin{align}
    \norm{L_{U,1} }{L^\infty(dx)} &\leq c \norm{\left(1-C_{w_\theta}\right)^{-1} C_{\omega'_{\theta}}\mu}{L^2(dz)}\norm{w_\theta}{L^2(dz)}\leq c \norm{r}{H^{1}(dz)}\norm{\mu}{L^\infty(dz)} \norm{r}{L^2(dz)},\\
\norm{L_{U,2} }{L^\infty(dx)}& \leq ct \norm{ \int \diamond \mu (w_\theta^- + w_\theta^+)}{L^\infty(dx)} \norm{\mu}{L^\infty(dz)} \norm{r}{L^{2,1}},\\
 \norm{L_{U,3} }{L^\infty(dx)} &\leq c \norm{\mu-I}{L^2(dz)}\norm{r}{H^1(dz)},\\
\norm{L'_U }{L^2(dx)} &\leq c \norm{r}{H^1}.
\end{align}
Notice that 
\begin{equation}
    \widetilde{L}G= -\left(i x \operatorname{ad} \sigma+12 t z\partial_x\right)\twomat{0}{u}{u}{0}u^{\ell-1}=\widetilde{L}U u^{\ell-1}
\end{equation}
so we can decompose $\partial_z F_3$ into:
\begin{equation}
    \int e^{-i \theta \mathrm{ad} \sigma}m_-^{-1} \tilde{L}G m_- dy=\int e^{-i \theta \mathrm{ad} \sigma}m_-^{-1} u^{\ell-1}\left(L_{U,1} + L_{U,2} +L_{U,3}+L'_U \right) m_- dy
\end{equation}
and deduce that 
\begin{equation}
    \norm{\partial_z F_3}{L^2(dz) }\leq c t^{-(\ell-4)/3}\norm{m}{L^\infty(dz)}.
\end{equation}
We combine all the constants $c$ in the proof into $C^{1,0}_F(\rho.\eta)$ and conclude the proof.
\end{proof}
\subsection{Estimating the difference}
\begin{remark}
  In this subsection, to avoid over-complicating things, when obtaining necessary estimates,  we will only deal with certain terms that lead to the lowest order of time decay. And it will become clear that by doing so we can cover all the necessary steps in the estimation of the remaining terms with higher order of decay.
\end{remark}
\begin{lemma}
\label{le:DF}
    \begin{equation}
        \|\Delta F\|_{L^2(dz)} \leq \frac{C^{0,0}_{\Delta F}(\rho, \eta)}{(1+t)^{(\ell-3)/3}} \|\Delta r\|_{H^{1,2}_{\scriptscriptstyle 1/2}}
    \end{equation}
    where the constant $C^{0,0}_{\Delta F}(\rho, \eta)$ is uniform in $x$ and $t$ and are monotonic in $\rho$ and $\eta$.
\end{lemma}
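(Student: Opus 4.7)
The plan is to mirror the decomposition used in the proof of Lemma~\ref{le:FL2}. Following \eqref{eq:decomF}, we write
\[
F = F^{(1)} + F^{(2)} + F^{(3)} + F^{(4)},
\]
so that $\Delta F = \Delta F^{(1)} + \Delta F^{(2)} + \Delta F^{(3)} + \Delta F^{(4)}$. We then distribute $\Delta$ through each integrand using the product rule $\Delta(ABC)=(\Delta A)B_2C_2+A_1(\Delta B)C_2+A_1B_1(\Delta C)$ together with the telescoping identity $u_2^\ell-u_1^\ell=(u_2-u_1)\sum_{k=0}^{\ell-1}u_2^{\ell-1-k}u_1^k$. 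Each resulting piece is then bounded by the same chain of Plancherel plus H\"older inequalities that appeared in Lemma~\ref{le:FL2}, with one factor replaced by its $\Delta$-analogue.

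Two inputs carry the $\|\Delta r\|_{H^{1,2}}$-dependence. First, the Lipschitz continuity of the direct/inverse scattering map in Theorem~\ref{thm:biject} gives
\[
\|\Delta u\|_{L^{2,1}(dx)} \leq c \|\Delta r\|_{H^{1,2}},
\]
and the embeddings $L^{2,1}(\bbR)\hookrightarrow L^1\cap L^2$ and $H^{2,1}\hookrightarrow L^\infty$ control $\|\Delta u\|_{L^p(dx)}$ uniformly in $t$ for $p\in[1,\infty]$. Second, $\Delta m_-$ is estimated via the second resolvent identity
\[
(1-C_{v_\theta^{(2)}})^{-1} - (1-C_{v_\theta^{(1)}})^{-1} = (1-C_{v_\theta^{(2)}})^{-1}\,\Delta C_{v_\theta}\,(1-C_{v_\theta^{(1)}})^{-1},
\]
combined with the uniform resolvent bound from Proposition~\ref{prop:new apriori} and the elementary estimate $\|\Delta C_{v_\theta}h\|_{L^2(dz)}\leq c\|h\|_{L^2}\|\Delta r\|_{L^\infty}\leq c\|h\|_{L^2}\|\Delta r\|_{H^{1,2}}$; these together yield $\|\Delta(m_-^{-1}-I)\|_{L^2(dz)\otimes L^\infty(dx)}\leq c\|\Delta r\|_{H^{1,2}}$, and an analogous bound in $L^4(dz)\otimes L^\infty(dx)$ for use in $\Delta F^{(4)}$.

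With these two inputs, each of $\Delta F^{(1)},\Delta F^{(2)},\Delta F^{(3)},\Delta F^{(4)}$ reduces to a product of $L^p$-norms of $u_1,u_2$ times a single factor of $\|\Delta r\|_{H^{1,2}}$ and (where applicable) one or two factors of $\|m_--I\|$ in the appropriate mixed norm. The dominant $t$-decay is produced by the terms carrying $\|u^\ell\|_{L^1}\leq \|u\|_{L^\infty}^{\ell-2}\|u\|_{L^2}^2 \leq c\, t^{-(\ell-2)/3}$; the slightly weaker $(1+t)^{-(\ell-3)/3}$ in the statement absorbs the lossiest case, in which $\Delta$ falls on $m_-$ and we are forced to use the mixed-norm resolvent bound together with $\|u\|_{L^\infty}^{\ell-3}$ (one less $u$ available for pointwise decay). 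The main obstacle is exactly this mixed-norm $\Delta m_-$ estimate: the resolvent identity reduces it to controlling $\Delta C_{v_\theta}$, and the $L^2(dz)\otimes L^\infty(dx)$-norm is then handled by separating the Cauchy projections (uniformly bounded on $L^2(dz)$) from multiplication by $w_\theta^{\pm}$, whose differences are controlled pointwise in $x$ by the $L^\infty(dz)$-norm of $\Delta r$. Summing the contributions and absorbing all implicit constants into $C^{0,0}_{\Delta F}(\rho,\eta)$ yields the claimed inequality.
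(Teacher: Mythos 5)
Your proposal follows essentially the same route as the paper: the same four-term decomposition \eqref{eq:decomF}, distribution of $\Delta$ through each product, and control of the two $\Delta$-carrying factors ($\Delta u$ via Lipschitz continuity of the scattering maps, $\Delta m_-$ via a resolvent-type identity with the uniform bounds of Proposition \ref{prop:new apriori}), arriving at the same lossiest rate $(1+t)^{-(\ell-3)/3}$ coming from the term in which the difference falls on a factor with no time decay. The only cosmetic difference is that you re-derive the $\Delta u$ and $\Delta m_-$ mixed-norm bounds from Theorem \ref{thm:biject} and the second resolvent identity, whereas the paper simply cites \cite[Lemma 4.16 and Lemma 5.12]{DZ-2}; the substance is identical.
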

\begin{proof}
    Decomposing $F$ into four parts as given by \eqref{eq:decomF}, we estimate $I_i:= \Delta F^{(i)}$, $i=1,...4$ term by term:
    \begin{eqnarray*}
        \norm{I_1}{L^2 (dz)} = c\|\Delta G\|_{L^2(dx)} &\leq&c\|u\|_{L^\infty(dx)}^{\ell-3} \norm{u}{L^2(dx)}^2\|\Delta u\|_{L^\infty(dx)}\\
        &\leq& c t^{-(\ell-3)/3} \norm{\Delta r}{H^{1,2}_{\scriptscriptstyle 1/2}}
    \end{eqnarray*}
    where we use the estimate of $\|\Delta u\|_{L^\infty(dx)}$ and $\|\Delta m\|_{L^2(dz)}$ in \cite[Lemma 4.16 and Lemma 5.12]{DZ-2}.
    \begin{eqnarray*}
       \norm{I_2}{L^2 (dz)} & = & c_1\|\Delta G\|_{L^1(dx)} \|m_--I\|_{L^2(dz)\otimes L^\infty(dx)}+ c_2\|G\|_{L^1(dx)} \|\Delta m_-\|_{L^2(dz)\otimes L^\infty(dx)}\\
        &\leq& c_1 \|u\|_{L^\infty(dx)}^{\ell-3} \norm{u}{L^2(dx)}^2 \|\Delta u\|_{L^\infty(dx)}\|m_--I\|_{L^2(dz)\otimes L^\infty(dx)}+ c_2\|u\|_{L^\infty(dx)}^{\ell-2} \norm{u}{L^2(dx)}^2\|\Delta m_-\|_{L^2(dz)\otimes L^\infty(dx)}\\
       &\leq& c t^{-(\ell-3)/3} \norm{\Delta r}{H^{1,2}_{\scriptscriptstyle 1/2}}.
    \end{eqnarray*}

    The estimate of
    \[ \norm{I_3}{L^2 (dz)} = c_1\|\Delta G\|_{L^1(dx)} \|m_-^{-1}-I\|_{L^2(dz)\otimes L^\infty(dx)}+ c_2\|G\|_{L^1(dx)} \|\Delta m_-^{-1}\|_{L^2(dz)\otimes L^\infty(dx)}\]
    is similar to $I_2$.
    
    \begin{eqnarray*}
        \norm{I_4}{L^2 (dz)} & = & c_1\|\Delta m_-^{-1}\|_{L^4(dz)\otimes L^\infty(dx)}\|G\|_{L^1(dx)} \|m_--I\|_{L^4(dz)\otimes L^\infty(dx)}\\
        &&+ c_2\|m_-^{-1}-I\|_{L^4(dz)\otimes L^\infty(dx)}\|\Delta G\|_{L^1(dx)} \|m_--I\|_{L^4(dz)\otimes L^\infty(dx)}\\
        &&+ c_3\|m_-^{-1}-I\|_{L^4(dz)\otimes L^\infty(dx)}\|G\|_{L^1(dx)} \|\Delta m_-^{-1}\|_{L^4(dz)\otimes L^\infty(dx)}\\
        &\leq& \left(c_1 t^{-(\ell-2)/3}+ c_2 t^{-(\ell-3)/3}+ c_3 t^{-(\ell-2)/3}\right)\norm{\Delta r}{H^{1,2}_{\scriptscriptstyle 1/2}}\\
        &\leq & c t^{-(\ell-3)/3}\norm{\Delta r}{H^{1,2}_{\scriptscriptstyle 1/2}}.
    \end{eqnarray*}
We combine all the constants $c$ in the proof into $C^{0,0}_{\Delta F}(\rho.\eta)$ and conclude the proof.
\end{proof}

\begin{lemma}
\label{le:DzF}
    \begin{equation}
        \|\Delta \diamond^2 F\|_{L^2(dz)}  \leq \frac{C^{0,2}_{\Delta  F}(\rho, \eta)}{(1+t)^{(\ell-3)/3}}\|\Delta r\|_{H^{1,2}_{\scriptscriptstyle 1/2}}
    \end{equation}
    where the constant $C^{0,2}_{\Delta F}(\rho, \eta)$ is uniform in $x$ and $t$ and are monotonic in $\rho$ and $\eta$.
\end{lemma}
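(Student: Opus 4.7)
The plan is to mirror the structure of Lemma \ref{le:zF2} but apply the difference operator $\Delta$ termwise using a Leibniz-type expansion. Write $F=F^{(1)}+F^{(2)}+F^{(3)}+F^{(4)}$ as in \eqref{eq:decomF}; then
\[
\Delta\diamond^2 F\;=\;\sum_{i=1}^{4}\Delta\bigl(\diamond^2 F^{(i)}\bigr),
\]
and for each $i$ I would insert the commutation identity
$\diamond^2 C^{\pm}-C^{\pm}\diamond^2=-\tfrac{\diamond}{2\pi i}\int-\bigl(\tfrac{1}{2\pi i}\int\diamond\bigr)$
exactly as in Lemma \ref{le:zF2} to move the two powers of $z$ off the Cauchy projection. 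This reduces everything to $x$-space integrals of polynomials in $u,u_x,u_{xx}$ times products of $m_{-}^{\pm 1}-I$, $\mu$ and $w_\theta^\pm$. The difference $\Delta$ is then distributed by the Leibniz rule $\Delta(AB)=A_2\,\Delta B+(\Delta A)B_1$; since each factor carries at least one power of $u$ (which has $L^\infty$-decay $t^{-1/3}$ by Corollary \ref{cor:pointwise}), the extra $\Delta$ costs precisely one factor of $u$, producing the overall rate $t^{-(\ell-3)/3}$ as claimed.

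Concretely, for $\Delta\diamond^2 F^{(1)}=c\,\Delta\partial_{xx}^2 u^\ell$ I would expand $\partial_{xx}^2 u^\ell$ as $\ell u^{\ell-1}u_{xx}+\ell(\ell-1)u^{\ell-2}u_x^2$, distribute $\Delta$, and bound each term by Hölder in $x$ using $\|u_i\|_{L^\infty}\lesssim t^{-1/3}$, the $L^2$ bound $\|u_{xx}\|_{L^2}\lesssim 1$ from \eqref{L2 uxx}, and the difference bounds $\|\Delta u\|_{L^\infty}$, $\|\Delta u_x\|_{L^2}$, $\|\Delta u_{xx}\|_{L^2}$. The first of these is the DZ-2 estimate already quoted in Lemma \ref{le:DF}; the latter two follow from the Lipschitz continuity of $\mathcal R^{-1}\colon H^{1,2}\to H^{2,1}$ in Theorem \ref{thm:biject}. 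For $\Delta\diamond^2 F^{(2)}$ and $\Delta\diamond^2 F^{(3)}$ I would reuse the $I_1,I_2,I_3$ decomposition of Lemma \ref{le:zF2}; applying $\Delta$ produces, for each $I_j$, a finite sum of terms in which exactly one of $G$, $m_{-}-I$, $\mu$, or $w_\theta^\pm$ is replaced by its $\Delta$-counterpart. The bounds for $\Delta G$, $\Delta(m_{-}-I)$, $\Delta\mu$ and $\Delta r$ come from \cite[Lemmas 4.16, 5.12]{DZ-2}, Proposition \ref{prop:new apriori}, and the hypothesis $\|\Delta r\|_{H^{1,2}_{\scriptscriptstyle 1/2}}$ respectively, while the ``undifferenced'' factor is estimated exactly as in Lemma \ref{le:zF2}.

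For the trilinear term $\Delta\diamond^2 F^{(4)}$ I would write
\[
\Delta\bigl[(m_{-}^{-1}-I)\,G\,(m_{-}-I)\bigr]
=(\Delta m_{-}^{-1})G_1(m_{-,1}-I)+(m_{-,2}^{-1}-I)(\Delta G)(m_{-,1}-I)+(m_{-,2}^{-1}-I)G_2(\Delta m_{-}),
\]
and again insert the commutation identity before applying $\Delta$, so that the two powers of $z$ are converted into $\partial_x$ acting on the $G$-factor. Each resulting term is then estimated by Hölder in $x$ combined with the $L^4(dz)\otimes L^\infty(dx)$ bound on $C^-[\diamond\mu(w_-+w_+)]$ already derived in Lemma \ref{le:zF2}, its $\Delta$-analogue following from the identity $\Delta\mu=(1-C_{w_{\theta,2}})^{-1}\bigl[(C_{w_{\theta,2}}-C_{w_{\theta,1}})\mu_1\bigr]$ together with the resolvent bound in Proposition \ref{prop:new apriori}. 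Summing the estimates, the worst rate comes from the $(\Delta G)$ pieces, giving $t^{-(\ell-3)/3}$.

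The main obstacle is the trilinear $F^{(4)}$ piece after the commutation relation has been applied: one must keep track of the $\Delta$ acting through three factors that depend on $r$, while simultaneously converting $z$-multiplication into $x$-derivatives that fall on $G=\mathrm{diag\text{-}off}(u^\ell)$, producing terms of the form $u^{\ell-1}u_{xx}$ and $u^{\ell-2}u_x^2$ whose differences require the \emph{higher} Sobolev Lipschitz control of $\mathcal R^{-1}$ together with the $L^\infty$-decay of Corollary \ref{cor:pointwise} applied to both $u_1$ and $u_2$. Once this bookkeeping is done, collecting all constants into $C^{0,2}_{\Delta F}(\rho,\eta)$ (monotonic in $\rho,\eta$ because every factor involved is) completes the proof of the stated estimate.
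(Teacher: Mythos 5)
Your proposal is correct and follows essentially the same route as the paper: the same four-term decomposition \eqref{eq:decomF}, the same commutation identity to peel the powers of $z$ off the Cauchy projection, a Leibniz distribution of $\Delta$ over each factor, and the observation that the worst ($\Delta G$) terms cost exactly one power of $u\sim t^{-1/3}$, yielding $t^{-(\ell-3)/3}$. The only substantive difference is in sourcing the auxiliary difference bounds: you invoke the Lipschitz continuity of $\mathcal R^{-1}$ and the second resolvent identity for $\Delta\mu$, whereas the paper computes $\Delta U_x$, $\Delta U_{xx}$ directly from the Beals--Coifman representation and the ODE for $\mu$, and takes the $\Delta\mu$, $\Delta m_\pm$ bounds from Lemmas \ref{lm:deltaM}--\ref{lm:deltaM'''} (where one must also absorb the $\ln|(z-z_0)/(z+z_0)|$ singularity, a point worth acknowledging explicitly); both routes deliver the stated estimate.
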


\begin{proof}
   As is given by \eqref{eq:decomF}, the first part of  $\|\Delta \diamond^2 F^{(1)}\|_{L^2(dz)}$ is equal to $\|\Delta \partial^2_{xx} G\|_{L^2(dx)}$ according to the Plancherel's theorem. 

    \begin{eqnarray*}
        \|\Delta \diamond^2 F^{(1)}\|_{L^2(dz)} &=& c\|\Delta \partial^2_{xx}G\|_{L^2(dx)}\\
        \nonumber
        &\leq& c_1  \|u\|_{L^\infty(dx)}^{\ell-2}\|\partial^2_{xx} u\|_{L^2(dx)}\|\Delta u\|_{L^\infty(dx)}+c_2 \|u\|_{L^\infty(dx)}^{\ell-2}\|\Delta\partial^2_{xx} u\|_{L^\infty(dx)}\|u\|_{L^2(dx)}\\
        \nonumber
        &&+c_3\|u_x\|_{L^2(dx)} \|u\|_{L^\infty(dx)}^{\ell-3}\|\Delta u\|_{L^\infty(dx)}+c_4\|\Delta u_x\|_{L^\infty(dx)} \|u\|_{L^\infty(dx)}^{\ell-2}\|u_x\|_{L^2(dx)}.
       \end{eqnarray*}
Recall that 
\begin{equation}
    U(x,t)=\frac{1}{2\pi i} \ad\sigma \int_{\mathbb{R}} \mu(x, z)\left(w_\theta^{+}(z)+w_\theta^{-}(z)\right) d z.
\end{equation}
And we can decompose $\Delta U_{x}$ as 
\begin{align}
  \Delta U_x &=   \frac{1}{2\pi i} \ad\sigma \int_{\mathbb{R}} \Delta \left[ \mu(x, z)_x\left(w_\theta^{+}(z)+w_\theta^{-}(z)\right) \right]d z \\
  \nonumber
   &\quad + \frac{1}{2\pi i} \ad\sigma \int_{\mathbb{R}} \diamond \mu(x, z) \Delta \left(w_\theta^{+}(z)+w_\theta^{-}(z)\right) d z .
\end{align}
We further calculate that 
\begin{align}
    U_{xx} &=\frac{1}{2\pi i} \ad\sigma \int_{\mathbb{R}} \mu_{xx}(x, z)\left(w_\theta^{+}(z)+w_\theta^{-}(z)\right) d z \\
   \nonumber
   &\quad +\frac{1}{2\pi i} \ad\sigma \int_{\mathbb{R}}  \mu(x, z)\diamond^2\left(w_\theta^{+}(z)+w_\theta^{-}(z)\right) d z \\
   \nonumber
   &\quad +\frac{1}{2\pi i} \ad\sigma \int_{\mathbb{R}}  \mu(x, z)_x \diamond\left(w_\theta^{+}(z)+w_\theta^{-}(z)\right) d z .
   \end{align}
 Moreover we have that
   \begin{align}
       \mu_{xx}=-iz[\sigma, \mu]_x+U_x\mu+ U\mu_x
   \end{align}
   and
 \begin{align}
     \Delta \mu_x=-iz [\sigma, \Delta\mu] +\Delta U\mu+U\Delta\mu,
 \end{align}
 and
 \begin{align}
    \Delta \mu_{xx} =&-iz \left[\sigma, -iz [\sigma, \Delta\mu] +\Delta U\mu+U\Delta\mu \right] +\Delta U_x \mu\\
    \nonumber
    &+ U \left( -iz [\sigma, \Delta\mu] +\Delta U\mu+U\Delta\mu \right).
 \end{align}
 
 So we have the following bound:
 \begin{align}
 \norm{ \Delta U_{x}}{L^\infty(dx)},\quad   \norm{ \Delta U_{xx}}{L^\infty(dx)} \leq c \norm{\Delta r}{H^{1,2}}.
 \end{align}
 So we conclude that 
 \begin{equation}
     \|\Delta \diamond^2 F^{(1)}\|_{L^2(dz)} \leq c t^{-(\ell-2)/3} \norm{\Delta r}{H^{1,2}}.
 \end{equation}
 We then analyze $\|\Delta \diamond^2 F^{(3)}\|_{L^2(dz)}$ since the estimate of $\|\Delta \diamond^2 F^{(2)}\|_{L^2(dz)}$ is similar. Decomposing it as we have done in the proof of Lemma \ref{le:zF2}:  we have $\Delta z^2F^{(3)}= \Delta I_1+ \Delta I_2+ \Delta I_3$ with
\begin{align}
    \Delta I_1 &= \Delta \int e^{-2i \theta}  G \left[C^-\diamond^2 \mu (w_++w_-)\right]dy\\
    \nonumber
    &=\int e^{-2i \theta}  \Delta G \left[C^-\diamond^2 \mu (w_++w_-)\right]dy + \int e^{-2i \theta}   G \Delta\left[C^-\diamond^2 \mu (w_++w_-)\right]dy\\
    \Delta I_2 &=  \Delta \int e^{-2i \theta} ( GU)_x dy, \\
     \Delta I_3 &= \Delta \int e^{-2i \theta} G\left(\frac{1}{2 \pi i} \int\diamond \mu (w_++w_-) \right)dy\\
     \nonumber
      &=  \int e^{-2i \theta} \Delta G\left(\frac{1}{2 \pi i} \int\diamond \mu (w_++w_-) \right)dy +  \int e^{-2i \theta} G \Delta\left(\frac{1}{2 \pi i} \int\diamond \mu (w_++w_-) \right)dy.
\end{align}
For $\Delta I_1$ we  consider 
\begin{align}
    \norm{\Delta C^-\diamond^2 \mu (w_++w_-)}{L^2(dz)} &\leq \norm{ C^- \Delta\mu \left[\diamond^2 (w_++w_-) \right]}{L^2(dz)} +\norm{ C^- \mu \Delta\left[\diamond^2 (w_++w_-) \right]}{L^2(dz)} \\
    \nonumber
   & \leq \norm{\Delta\mu \left[\diamond^2 (w_++w_-) \right]}{L^2(dz)}+\norm{ \mu \Delta\left[\diamond^2 (w_++w_-) \right]}{L^2(dz)}\\
   \nonumber
   &\leq |z_0|^2 C_{\Delta \mu}^2 (\eta)\norm{\Delta r}{H^{1,2}_{\scriptscriptstyle 1/2}} \norm{r}{L^{2,2}(dz)}+C_{ \mu}^2 (\eta)\norm{r}{H^{1,2}_{\scriptscriptstyle 1/2}} \norm{\Delta r}{L^{2,2}(dz)}.
\end{align}
   
Thus,
\begin{align}
    \norm{\int e^{-2i \theta}   G \Delta\left[C^-\diamond^2 \mu (w_++w_-)\right]dy}{L^2(dz)} &\leq C_{\Delta \mu}^2 (\eta)\norm{\Delta r}{H^{1,2}_{\scriptscriptstyle 1/2}} \norm{r}{L^{2,2}(dz)} \int \left\vert z_0^2 G\right\vert dy\\
    \nonumber
    &\quad +C_{ \mu}^2 (\eta)\norm{r}{H^{1,2}_{\scriptscriptstyle 1/2}} \norm{\Delta r}{L^{2,2}(dz)}  \int \left\vert G\right\vert dy\\
    \nonumber
    &\leq c t^{-(\ell-2)/3}\norm{\Delta r}{H^{1,2}_{\scriptscriptstyle 1/2}}.
\end{align}
And we conclude that 
\begin{equation}
    \norm{\Delta I_1}{L^2(dz)}\leq c t^{-(\ell-3)/3}\norm{\Delta r}{H^{1,2}_{\scriptscriptstyle 1/2}}.
\end{equation}
Similarly,
\begin{equation}
    \norm{\Delta I_2}{L^2(dz)}\leq c t^{-(\ell-2)/3}\norm{\Delta r}{H^{1,2}_{\scriptscriptstyle 1/2}}.
\end{equation}
Finally,
\begin{align}
  \norm{\Delta I_3}{L^2(dz)}&\leq  \norm{\int e^{-2i \theta} \Delta G\left(\frac{1}{2 \pi i} \int\diamond \mu (w_++w_-) \right)dy}{L^2(dz)}\\
  \nonumber 
  &\quad +\norm{\int e^{-2i \theta}  G \Delta\left(\frac{1}{2 \pi i} \int\diamond \mu (w_++w_-) \right)dy}{L^2(dz)}\\
  \nonumber
  &\leq c t^{-(\ell-2)/3}\norm{\Delta r}{H^{1,2}_{\scriptscriptstyle 1/2}}.
\end{align}
Finally for $\Delta\diamond^2 F^{(4)}$,  we only estimate
\begin{align}
     \norm{\int e^{-2i\theta} C^-\left[\diamond (\Delta\mu) (w_++w_-)\right]G C^-\left[\diamond \mu (w_++w_-)\right] }{L^2(dz)}&\leq \norm{C^-\left[\diamond (\Delta\mu) (w_++w_-)\right]}{L^4(dz)}^2\\
     \nonumber
    &\quad\times \norm{C^-\left[\diamond (\mu) (w_++w_-)\right]}{L^4(dz)}^2\norm{G}{L^1(dx)}\\
    \nonumber
    &\leq c \norm{(\Delta\mu) \diamond r}{L^2(dz)}^2\norm{\mu \diamond r}{L^2(dz)}^2\norm{z_0^2G}{L^1(dx)}
\end{align}
 which follows from Lemma \ref{lm:deltaM}, Lemma \ref{lm:deltaM'}-Lemma \ref{lm:deltaM'''} and conclude that
\begin{equation}
    \norm{\Delta\diamond^2 F^{(4)}}{L^2(dz)}\leq c t^{-(\ell-2)/3}\norm{\Delta r}{H^{1,2}_{\scriptscriptstyle 1/2}}.
\end{equation}
We combine all the constants $c$ in the proof into $C^{0,2}_{\Delta F}(\rho.\eta)$ and conclude the proof.
\end{proof}
\begin{lemma}
\label{le:dDF}
    \begin{equation}
        \|\Delta \partial_z F\|_{L^2(dz)} \leq \frac{C^{1,0}_{\Delta F}(\rho, \eta)}{(1+t)^{\frac{l}{2}+\frac{1}{2p}-\frac{3}{4}}}\|\Delta r\|_{H^{1,2}_{\scriptscriptstyle 1/2}}.
    \end{equation}
    where the constant $C^{1,0}_{\Delta F}(\rho, \eta)$ is uniform in $x$ and $t$ and are monotonic in $\rho$ and $\eta$.
\end{lemma}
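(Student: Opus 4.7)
The plan is to apply the $\Delta$ operator to the decomposition \eqref{eq: parzF} obtained in the proof of Lemma \ref{le:dF} and then, for each of the resulting sub-terms, expand via the Leibniz-type identity $\Delta(AB) = (\Delta A)B\big|_{r_1} + A\big|_{r_2}(\Delta B)$, estimating every factor using the $a\text{ priori}$ bounds already in place for $\mu$, $m_\pm$, $U$, $u$ together with their $\Delta$-counterparts. Recall that $\partial_z F = \partial_z F_1 + \partial_z F_2 + \partial_z F_3$, where $\partial_z F_1$ and $\partial_z F_2$ come from applying the twisted derivative $(\partial_z - \widetilde{L})$ to $m_-^{-1}$ and $m_-$ respectively, while $\partial_z F_3$ comes from $\widetilde{L}G$. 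For the difference I would first use $\Delta e^{i\theta\, \mathrm{ad}\,\sigma} = 0$ to keep the exponential factor outside the $\Delta$, then distribute $\Delta$ into each integrand.

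The main new ingredient is an estimate for $\Delta\bigl[(L - 12tzU)\mu\bigr]$. Starting from the identity \eqref{L-12tzU}, I would take differences, which produces (i) a piece $\Delta\bigl[(1-C_{w_\theta})^{-1}\bigr]\,C_{\omega'_\theta}\mu$ handled by the second-resolvent identity
$\Delta(1-C_{w_\theta})^{-1} = (1-C_{w_{\theta,2}})^{-1}\,C_{\Delta w_\theta}\,(1-C_{w_{\theta,1}})^{-1}$, together with the resolvent bound from \cite[Proposition 2.2]{CLT} (available thanks to $\|r\|_{H^1}<1/2$); (ii) a piece $(1-C_{w_\theta})^{-1}\Delta(C_{\omega'_\theta}\mu)$, which reduces to $\|\Delta r\|_{H^{1,2}_{\scriptscriptstyle 1/2}}$ after invoking \eqref{est:dm}; and (iii) differences of the rank-one integrals $\int\diamond\mu(w_\theta^- + w_\theta^+)$ and $U\int\mu(w_\theta^- + w_\theta^+)$, controlled by Lemmas \ref{lm:deltaM}--\ref{lm:deltaM'''} and the $\Delta U$, $\Delta U_x$ bounds already used in Lemma \ref{le:DzF}. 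After this key estimate I would then propagate it through the six-term splitting of $\partial_z F_2$ (and the analogous splitting of $\partial_z F_1$) by reproducing each of the bounds for $\partial_z F_{21},\ldots,\partial_z F_{26}$ with one of the factors replaced by its $\Delta$-version. In every line the spatial $L^\infty$-in-$x$ gain $\|u^\ell\|_{L^\infty}\lesssim (1+t)^{-\ell/3}$ (or better) from Corollary \ref{cor:pointwise} supplies the time decay, while one factor of $\|\Delta r\|_{H^{1,2}_{\scriptscriptstyle 1/2}}$ emerges from exactly one of the $\Delta$-factors.

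For $\Delta\partial_z F_3$, I would use the decomposition $\widetilde{L}G = \widetilde{L}U\cdot u^{\ell-1}$ together with $\widetilde{L}U = L_U + L_U'$ from \cite[p.~228]{DZ-2} and its refinement $L_U = L_{U,1}+L_{U,2}+L_{U,3}$ recalled in the proof of Lemma \ref{le:dF}. Taking differences term by term, the $\Delta L_{U,1}$ piece is again handled by the second-resolvent identity and \eqref{est:dm}, the $\Delta L_{U,2}$ piece by $\Delta\int\diamond\mu(w_\theta^- + w_\theta^+)$ estimates, the $\Delta L_{U,3}$ piece by $\Delta(\mu-I)$ in $L^2(dz)$, and $\Delta L_U'$ directly by $\|\Delta r\|_{H^1}$. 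Sandwiching by $m_-^{-1}$ and $m_-$ and using $\|\Delta u^{\ell-1}\|_{L^\infty}\lesssim \|u\|_{L^\infty}^{\ell-2}\|\Delta u\|_{L^\infty}$ (which scales like $(1+t)^{-(\ell-2)/3}\|\Delta r\|_{H^{1,2}_{\scriptscriptstyle 1/2}}$ by the Lipschitz continuity of $\mathcal{R}^{-1}$) gives the required decay.

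The hard part will be the simultaneous management of (a) the second-resolvent identity, which demands the strict smallness $\|r\|_{H^1}<1/2$ to absorb the $\log$ singularity appearing in the $\Delta\mu$-estimates \cite[Lemma 4.16]{DZ-2}, and (b) the bookkeeping of how many $x$-derivatives or factors of $z$ land on $\mu$ versus on $w_\theta^{\pm}$ when taking $\Delta$: the worst case, corresponding to the slowest time decay $(1+t)^{-(\ell/2 + 1/(2p) - 3/4)}$ stated in the lemma, should come from the pairing in $\Delta\partial_z F_3$ in which two factors of $u$ (instead of $u^\ell$) sit outside, so the $L^p$-gain from Lemma \ref{lm:mbound2} yields the announced exponent. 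Once this worst term is isolated, the remaining terms are strictly better and the constants are finally aggregated into $C^{1,0}_{\Delta F}(\rho,\eta)$.
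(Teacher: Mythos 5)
Your proposal follows essentially the same route as the paper: apply $\Delta$ to the three-term decomposition \eqref{eq: parzF}, expand each product by the Leibniz rule for $\Delta$, control the resolvent difference via the second resolvent identity together with \eqref{est:dm} (which is where the hypothesis $\norm{r}{H^1}<1/2$ enters), and treat $\Delta\partial_z F_3$ through $\widetilde{L}G=\widetilde{L}U\,u^{\ell-1}$ with the splitting $L_U=L_{U,1}+L_{U,2}+L_{U,3}$ and $L_U'$. The one place you diverge is your final paragraph: the paper does not obtain the stated exponent $\tfrac{\ell}{2}+\tfrac{1}{2p}-\tfrac{3}{4}$ from any $L^p$-gain; its proof of this lemma actually concludes with the slowest decay $(1+t)^{-(\ell-6)/3}$ (coming from the $\Delta L_{U,1}$ term in $\Delta\partial_z F_3$), consistent with \eqref{est:DF} in Proposition \ref{prop:G-estimate}, so the exponent printed in the lemma statement appears to be a leftover typo rather than something you should try to reproduce.
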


\begin{proof}
    As is given by \eqref{eq: parzF}, we again obtain the estimates term by term. We only need to calculate one of $\Delta \partial_z F_1$ and $\Delta \partial_z F_2$. For $\Delta \partial_z F_2$, we use the decomposition given in Lemma \ref{le:dF}  and first consider
\begin{align}
    \Delta \partial_z F_{21}&= \Delta\int e^{i \theta \mathrm{ad} \sigma} m_-^{-1} G\left[\left(1-C_{w_\theta}\right)^{-1} C_{\omega'_{\theta}}\mu\right] v_\theta^-dy\\
    \nonumber
    &= \int e^{i \theta \mathrm{ad} \sigma}( \Delta m_-^{-1} )G\left[\left(1-C_{w_\theta}\right)^{-1} C_{\omega'_{\theta}}\mu\right] v_\theta^-dy\\
\nonumber
&\quad +\int e^{i \theta \mathrm{ad} \sigma}m_-^{-1} ( \Delta  G)\left[\left(1-C_{w_\theta}\right)^{-1} C_{\omega'_{\theta}}\mu\right] v_\theta^-dy\\
\nonumber
&\quad +\int e^{i \theta \mathrm{ad} \sigma}m_-^{-1}  G \left( \Delta \left[\left(1-C_{w_\theta}\right)^{-1} C_{\omega'_{\theta}}\mu\right] \right)v_\theta^-dy\\
\nonumber
&\quad +\int e^{i \theta \mathrm{ad} \sigma}m_-^{-1} G\left[\left(1-C_{w_\theta}\right)^{-1} C_{\omega'_{\theta}}\mu\right] (\Delta v_\theta^-)dy.
\end{align}
We only consider the third term. We first note that by \eqref{est:dm}, for $x<0$,
\begin{align*}
    \norm{\left( \Delta \left[\left(1-C_{w_\theta}\right)^{-1} C_{\omega'_{\theta}}\mu\right] \right)}{L^2(dz)}\leq c  \dfrac{z_0^2 F(x,t,\eta)\norm{\Delta r}{H^{1,2}_{\scriptscriptstyle 1/2}} }{1-(1-\rho)^{-1}\norm{  r'}{L^{2}}}.
\end{align*}
So we obtain that 
\begin{equation}
    \norm{\int e^{i \theta \mathrm{ad} \sigma}m_-^{-1}  G \left( \Delta \left[\left(1-C_{w_\theta}\right)^{-1} C_{\omega'_{\theta}}\mu\right] \right)v_\theta^-dy}{L^2(dz)}\leq c t^{-(\ell-5)/3}\norm{\Delta r}{H^{1,2}_{\scriptscriptstyle 1/2}}
\end{equation}
and conclude that
\begin{equation}
    \norm{\Delta \partial_z F_{21}}{L^2(dz)}\leq c t^{-(\ell-5)/3}\norm{\Delta r}{H^{1,2}_{\scriptscriptstyle 1/2}}.
\end{equation}
For $\Delta\partial_z F_{22} $, again by \eqref{est:dm}, we obtain
\begin{align}
    \norm{\frac{6t}{\pi } \int e^{-i \theta \mathrm{ad} \sigma} G\left(\text{ad} \sigma \int \diamond \mu (w_\theta^- + w_\theta^+)\right)(\Delta\mu) v_\theta^- dy }{L^2(dz)}\leq ct^{-{(\ell-4)/3}} \norm{\Delta r}{H^{1,2}_{1/2}} ,
\end{align}
and
\begin{align}
    \norm{\frac{6t}{\pi } \int e^{-i \theta \mathrm{ad} \sigma} G\left(\text{ad} \sigma \int (\Delta\mu)\diamond(w_\theta^- + w_\theta^+)\right) \mu v_\theta^- dy }{L^2(dz)}\leq  ct^{-{(\ell-4)/3}} \norm{\Delta r}{H^{1,2}_{1/2}}  .
\end{align}
Since the rest of the terms can be estimated similarly,  we conclude that 
\begin{equation}
    \norm{\Delta \partial_z F_{22}}{L^2(dz)}\leq c t^{-(\ell-5)/3}\norm{\Delta r}{H^{1,2}_{\scriptscriptstyle 1/2}}.
\end{equation}
Furthermore we have
\begin{equation}
    \norm{\Delta \partial_z F_{23}}{L^2(dz)},  \norm{\Delta \partial_z F_{24}}{L^2(dz)}, \norm{\Delta \partial_z F_{25}}{L^2(dz)}, \norm{\Delta \partial_z F_{26}}{L^2(dz)}\leq c t^{-(\ell-5)/3}\norm{\Delta r}{H^{1,2}_{\scriptscriptstyle 1/2}}
\end{equation}
and this concludes the estimate of $\Delta \partial_z F_{2}$. For the estimate of $\Delta \partial_z F_3$, we recall the definition of $\partial_z F_3$ given in Lemma \ref{le:DF} and only consider the following term:
\begin{align}
    \Delta\int e^{-i \theta \mathrm{ad} \sigma}m_-^{-1} u^{\ell-1}\left(L_{U,1} \right) m_- dy&=\int e^{-i \theta \mathrm{ad} \sigma}(\Delta m_-^{-1}) u^{\ell-1}\left(L_{U,1} \right) m_- dy+\int e^{-i \theta \mathrm{ad} \sigma}m_-^{-1} (\Delta u^{\ell-1})\left(L_{U,1} \right) m_- dy\\
    \nonumber
    &\quad + \int e^{-i \theta \mathrm{ad} \sigma}m_-^{-1} u^{\ell-1}\left( \Delta L_{U,1} \right) m_- dy+ \int e^{-i \theta \mathrm{ad} \sigma}m_-^{-1} u^{\ell-1}\left(L_{U,1} \right)(\Delta m_-) dy.
\end{align}
Notice that
\begin{align}
    \norm{\Delta L_{U,1}}{L^\infty (dx)}&\leq \norm{\int \left[ \left(1-C_{w_\theta}\right)^{-1} C_{\omega'_{\theta}}\mu \right](\Delta w_\theta) dz}{L^\infty(dz)}\\
    \nonumber
    &\quad + \norm{\int \left[\Delta \left(1-C_{w_\theta}\right)^{-1} C_{\omega'_{\theta}}\mu \right]w_\theta dz}{L^\infty(dz)}.
    \end{align}
    And by \eqref{est:dm}
  \begin{align}
      \norm{\int \left[ \left(1-C_{w_\theta}\right)^{-1} C_{\omega'_{\theta}}\Delta \mu \right]w_\theta dz}{L^\infty(dz)} &\leq \norm{\left(1-C_{w_\theta}\right)^{-1}}{L^2(dz)}   \dfrac{z_0^2 F(x,t,\eta)\norm{\Delta r}{H^{1,2}} }{1-(1-\rho)^{-1}\norm{  r'}{L^{2}}}\norm{r}{L^2(dz)}
  \end{align}
    we can conclude that 
    \begin{equation}
      \norm{ \Delta\int e^{-i \theta \mathrm{ad} \sigma}m_-^{-1} u^{\ell-1}\left(L_{U,1} \right) m_- dy}{L^2(dz)}  \leq c t^{-(\ell-6)/3}\norm{\Delta r}{H^{1,2}_{\scriptscriptstyle 1/2}}
    \end{equation}
    and eventually obtain 
    \begin{equation}
        \norm{\Delta \partial_z F_3}{L^2(dz)}=\norm{\int e^{-i \theta \mathrm{ad} \sigma}m_-^{-1} \tilde{L}G m_- dy}{L^2(dz)}\leq  c t^{-(\ell-6)/3}\norm{\Delta r}{H^{1,2}_{\scriptscriptstyle 1/2}}.
    \end{equation}
    We combine all the constants $c$ in the proof into $C^{1,0}_{\Delta F}(\rho.\eta)$ and conclude the proof.
    \end{proof}

\section{$L^\infty$ estimates through $\overline{\partial}$-nonlinear steepest descent}
\label{sec:dbar}
In this section, we will make use of the $\overline{\partial}$-steepest descent method to obtain $L^\infty$ bound on $m_\pm$ and $\mu$ which are necessary for the proof of Theorem \ref{thm:r}. We refer the reader to \cite[lemma 5.17, 5.23]{DZ-2} where $L^\infty$-norm plays a key role in obtaining the important estimates \cite[(6.5)-(6.6)]{DZ-2}. We then prove a new type of \textit{a priori} estimate  which is an alternative version of \cite[Theorem 1.7]{DZ-1} in the context of mKdV.
We first rewrite \eqref{eq:r} as follows:
\begin{align}
    r(t)(z)&=r_0(z)+\varepsilon \int_0^t d s e^{-8i z^3 s} \int_{-\infty}^{+\infty} d y e^{-2i y z}\left(m_{-}^{-1}(z ; y,u(s)) G(u(y, s)) m_{-}(z ; y, u(s))\right)_{21}\\
    \nonumber
           &=r_0(z)+\varepsilon \int_0^t d s e^{-8i z^3 s} \int_{-\infty}^{-t^{1/3
           }} d y e^{-2i y z}\left(m_{-}^{-1}(z ; y,u(s)) G(u(y, s)) m_{-}(z ; y, u(s))\right)_{21}\\
           \nonumber
            & \quad + \varepsilon \int_0^t d s e^{-8i z^3 s} \int_{-t^{1/3}}^{t^{1/3}} d y e^{-2i y z}\left(m_{-}^{-1}(z ; y,u(s)) G(u(y, s)) m_{-}(z ; y, u(s))\right)_{21}\\
           \nonumber
           & \quad + \varepsilon \int_0^t d s e^{-8i z^3 s} \int_{t^{1/3}}^{+\infty} d y e^{-2i y z}\left(m_{-}^{-1}(z ; y,u(s)) G(u(y, s)) m_{-}(z ; y, u(s))\right)_{21}\\
           \nonumber
           &=r_0+ \varepsilon \textrm{R}_1+ \varepsilon\textrm{R}_2+ \varepsilon\textrm{R}_3.
\end{align}
\begin{remark}
Assume that $t\gg 1$, we remark that
\begin{enumerate}
    \item For  $\textrm{R}_1$, the limits of integration corresponds to Region I and Region II in Figure \ref{fig:regions}. And more specifically, we have 
$$z_0=\sqrt{\frac{-x}{12 t}}>ct^{-1/3}, \quad z_0 t>c t^{2/3}.$$

\item For $\textrm{R}_2$, the limits of integration corresponds to Region III in Figure \ref{fig:regions} with
$$|x|/t^{1/3}\leq c.$$
\item For $\textrm{R}_3$, the limits of integration corresponds to Region IV-V in Figure \ref{fig:regions} with
$$\tau=\left(\frac{x}{12 t^{1 / 3}}\right)^{3 / 2}>c>0.$$
\end{enumerate}
From the conclusions of Theorem \ref{thm:main1} we can have the following uniform time decay of $u(x,t)$:
\begin{equation}
    u(x,t)\leq c/(1+t)^{1/3}
\end{equation}
where the constant c is independent of $x$ and $t$.
\end{remark}
\begin{remark}
    Throughout the rest of this section, the subscript $i=1,2$ denotes two reflection coefficients 
    \begin{equation}
        r_1, r_2\in H^{1,2}_{\scriptscriptstyle 1/2}(\bbR):= H^{1,2}(\bbR)\cap \lbrace f: \norm{f}{H^1}<1/2 \rbrace
    \end{equation}
    and also terms that consist of $r_i$. $\Delta$ is the difference operator: $\Delta f=f_1-f_2$.
\end{remark}
\subsection{the analysis of $\textrm{R}_1$}
Recall that in \cite[Section 2-Section 6]{CL}, a series of transformations are made to the solution $m(z; x, t)$ to Problem \ref{prob:mKdV.RH0}
\begin{equation}
\label{m:trans}
    m(z ; x, t)=m^{(3)}(z ; x, t) m^{\mathrm{LC}}\left(z ; z_0\right) \mathcal{R}^{(2)}(z)^{-1} \delta(z)^{\sigma_3}
\end{equation}
where $ m^{(3)}$ satisfies the following $\dbar$-integral equation:
\begin{equation}
\label{eq:m3}
    m^{(3)}(z ; x, t) =I+\frac{1}{\pi} \int_{\mathbb{C}} \frac{\bar{\partial}  m^{(3)}(s)}{s-z} d A(s)=I+\frac{1}{\pi} \int_{\mathbb{C}} \frac{ m^{(3)}(s) W(s)}{s-z} d A(s).
\end{equation}
\subsubsection{$\norm{m}{L^\infty}$ estimate}
We refer the reader to \cite[Problem 2.1 ]{CL}, \cite[Problem 4.1 ]{CL}, \cite[(3.3)-(3.10)]{CL} for the definitions of $\delta(z)$, $m^{\mathrm{LC}}$ and $ \mathcal{R}^{(2)}$ respectively.
Equation \eqref{eq:m3} can be rewritten as 
\begin{equation}
    (I-S)\left[ m^{(3)}(z)\right]=I
\end{equation}
where
\begin{equation}
\label{op:S}
    S[f]=\frac{1}{\pi} \int_{\mathbb{C}} \frac{f(s) W(s)}{s-z} d A(s).
\end{equation}
Here ( see \cite[(5.1)]{CL})
\begin{equation}
    W(z ; x, t)=m^{\mathrm{LC}}(z ; x, t) \bar{\partial} \mathcal{R}^{(2)}(z) m^{\mathrm{LC}}(z ; x, t)^{-1}.
\end{equation}
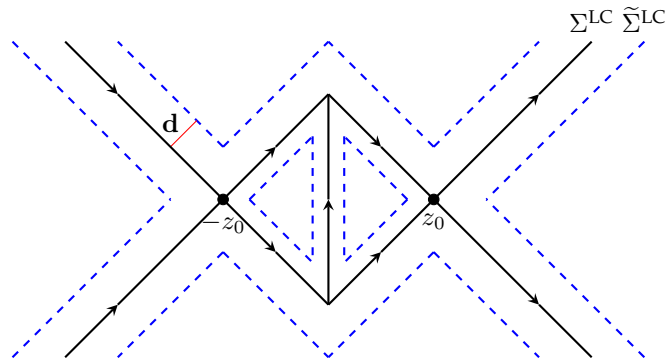
\begin{figure}[H]
\caption{$\Sigma^{\textrm{LC}}$}
\vskip 15pt
\begin{tikzpicture}[scale=0.7]
\draw [thick,dashed,blue] 					(-6, -3) -- (-3,0);	
\draw[thick, dashed, blue] 					(-6,3) -- (-3,0);

\draw [thick,dashed, blue] 					(6, 3) -- (3,0);	
\draw[thick, dashed, blue] 					(6,-3) -- (3,0);

\draw [thick,dashed, blue] 					(2, 1) -- ( 4,3);
\draw [thick,dashed, blue] 					(2, -1) -- ( 4,-3);

\draw [thick,dashed, blue] 					(-2, 1) -- ( -4,3);
\draw [thick,dashed, blue] 					(-2, -1) -- ( -4,-3);

\draw [thick,dashed, blue] 					(-2, 1) -- ( 0,3);
\draw [thick,dashed, blue] 					(-2, -1) -- ( 0,-3);

\draw [thick,dashed, blue] 					(2, 1) -- ( 0,3);
\draw [thick,dashed, blue] 					(2, -1) -- ( 0,-3);

\draw [thick,dashed, blue] 					(-1.5, 0) -- ( -0.3, 1.2);
\draw [thick,dashed, blue] 					(-1.5, 0) -- ( -0.3, -1.2);
\draw[thick, dashed, blue]                     (-0.3, 1.2)--(-0.3, -1.2);

\draw [thick,dashed, blue] 					(1.5, 0) -- ( 0.3, 1.2);
\draw [thick,dashed, blue] 					(1.5, 0) -- ( 0.3, -1.2);
\draw[thick, dashed, blue]                     (0.3, 1.2)--(0.3, -1.2);
\draw[red]	(-3,1) -- (-2.5, 1.5);		
\node[above] at (-3, 1.1) {$\mathbf{d}$};

\draw	[->,thick,>=stealth] 	(2,0) -- (4,2);								
\draw	[thick]	(5, 3) -- (4,2);
\draw  [->,thick,>=stealth] 	(-5,3) -- (-4,2);							
\draw [thick]	(-2,0) -- (-4,2);
\draw[->,thick,>=stealth]		(-5,-3) -- (-4,-2);							
\draw[thick]						(-4,-2) -- (-2,0);
\draw[thick,->,>=stealth]		(2,0) -- (4,-2);								
\draw[thick]						(4,-2) -- (5,-3);
\draw[thick,->,>=stealth]	(-2,0) -- (-1,1);								
\draw  [thick]    (0,2) -- (-1, 1);
\draw[thick,->,>=stealth]		(-2,0) -- (-1,-1);								
\draw[thick]						(-1,-1) -- (0, -2);
\draw	[thick,->,>=stealth]		(0,2) -- (1,1);								
\draw	[thick]  (2,0) -- (1, 1);

\draw[thick,->,>=stealth]		(0,-2) -- (1,-1);								
\draw[thick]						(1,-1) -- (2, -0);
\draw	[fill]							(-2,0)		circle[radius=0.1];	
\draw	[fill]							(2,0)		circle[radius=0.1];
\draw[->,thick,>=stealth] 		(0, -2) -- (0,0);
\draw[thick]			(0,0) -- (0,2);		

\node[below] at (-2,-0.1)			{$-z_0$};
\node[below] at (2,-0.1)			{$z_0$};

\node[above] at (5,3)			{$\Sigma^{\textrm{LC}}$};
\node[above] at (6,3)			{$\widetilde{\Sigma}^{\textrm{LC}}$};
\end{tikzpicture}
\label{fig:contour}
\end{figure}
We introduce the following lemma:
\begin{lemma}
\label{lm:mlc}
    \begin{align}
        \norm{m^{\mathrm{LC}}}{L^\infty}&\leq c \left(\norm{r}{H^1}+\norm{r}{L^{2,1}}\right),\\
        \label{dmlc}
          \norm{\Delta m^{\mathrm{LC}}}{L^\infty}&\leq c \left(\norm{\Delta r}{H^1}+\norm{\Delta r}{L^{2,1}}\right) \ln \left\vert \frac{z-z_0}{z+z_0}\right\vert.
    \end{align}
\end{lemma}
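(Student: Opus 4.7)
The plan is to exploit the explicit representation of $m^{\mathrm{LC}}$ as the local parametrix at $\pm z_0$ constructed from parabolic cylinder functions. Recall that the jump matrices on the six rays emanating from $\pm z_0$ (see Figure \ref{fig:contour}) are triangular with entries of the form $r(z_0)e^{\pm 2i\theta}$, $\overline{r(z_0)}e^{\mp 2i\theta}$ after conjugation by the scalar function $\delta(z)^{\sigma_3}$. On each of the local disks $|z \mp z_0|<\mathbf{d}$ the model problem can be solved explicitly, and the resulting parametrix is given by a rational combination of parabolic cylinder functions whose parameters are $\pm i\kappa$ with $\kappa=-\tfrac{1}{2\pi}\log(1-|r(z_0)|^2)$.

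First I would bound $\|m^{\mathrm{LC}}\|_{L^\infty}$. Using the classical asymptotics of parabolic cylinder functions together with the fact that $\kappa$ is controlled uniformly by $\|r\|_{L^\infty}<1/2$, one obtains a uniform bound for $m^{\mathrm{LC}}$ on each local disk. Outside the disks, $m^{\mathrm{LC}}$ coincides with $I$ plus a Cauchy-type corrector driven by the jump, and the bound follows from the standard Beals--Coifman theory once $\|r\|_{H^1}$ (to control $r(z_0)$ via Sobolev embedding) and $\|r\|_{L^{2,1}}$ (to control tails of the Cauchy integral defining $\delta$) are available. Putting the two regimes together yields the first estimate with a constant monotone in $\rho$ and $\eta$.

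For the second estimate, I would linearize the explicit parametrix in $r(z_0)$ and $\overline{r(z_0)}$. The only source of singular dependence on $z$ near $\pm z_0$ comes from the scalar factor $\delta(z)=(z-z_0)^{i\kappa}(z+z_0)^{-i\kappa}e^{\chi(z)}$: a variation of $r$ produces a variation in $\kappa$, and differentiating $(z-z_0)^{i\kappa}(z+z_0)^{-i\kappa}$ in $\kappa$ yields exactly the factor $\log\bigl|(z-z_0)/(z+z_0)\bigr|$. The smooth remainder $\chi(z)$ is Lipschitz in $r$ with respect to $\|\cdot\|_{H^1}$ (essentially the argument used in Theorem~\ref{thm:biject} and in \cite{CL}), and the parabolic cylinder building blocks are real-analytic in their parameters, so combining everything produces the claimed bound in \eqref{dmlc}.

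The main obstacle will be tracking the logarithmic factor uniformly in $z_0$ (and hence in $x$, $t$): one must verify that after the $\delta(z)^{\sigma_3}$ conjugation the only singular contribution to $\Delta m^{\mathrm{LC}}$ is the expected $\log\bigl|(z-z_0)/(z+z_0)\bigr|$, and that the $L^\infty$ constant in front is independent of $z_0$. This requires a careful Lipschitz estimate on $\delta$ and on the parabolic cylinder parametrix jointly, ensuring that the small-norm RHP argument used to glue the local model to the global problem remains uniform across Regions I--II.
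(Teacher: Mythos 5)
Your proposal is essentially correct, but it takes a genuinely different route from the paper. The paper does not invoke the parabolic cylinder asymptotics at all: it treats $m^{\mathrm{LC}}$ as the solution of the Riemann--Hilbert problem on the cross $\Sigma^{\mathrm{LC}}$ and, for $z$ at distance at least $\mathbf{d}$ from the contour, bounds it directly from the Cauchy-integral representation by
$\mathbf{d}^{-1}\bigl(\|v^{(2)}-I\|_{L^1}+\|m^{\mathrm{LC}}-I\|_{L^2}\|v^{(2)}-I\|_{L^2}\bigr)$,
with the analogous three-term identity for $\Delta m^{\mathrm{LC}}$; for $z$ approaching $\Sigma^{\mathrm{LC}}$ it uses the local analyticity of the jump to deform the contour via the bounded transformation $\widetilde{m}^{\mathrm{LC}}=m^{\mathrm{LC}}(v^{(2)})^{-1}$. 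The norms $\|r\|_{H^1}+\|r\|_{L^{2,1}}$ then enter only through the $L^1$ and $L^2$ norms of $v^{(2)}-I$, and the logarithm in \eqref{dmlc} enters through $\Delta v^{(2)}$, i.e.\ through the bound $|\Delta\delta^{-2}|\lesssim(1+\ln|(z-z_0)/(z+z_0)|)\|\Delta r\|_{H^1}$ of \eqref{est:Ddelta} --- which is exactly the mechanism you identified by differentiating $\bigl(\frac{z-z_0}{z+z_0}\bigr)^{i\kappa}$ in $\kappa$, so your diagnosis of the source of the log is the same as the paper's. What the paper's softer argument buys is that one never needs analyticity of the parametrix in its parameters or uniform PCF asymptotics: the same two-line resolvent-type identity yields both the bound and the Lipschitz bound, and uniformity in $z_0$ is automatic from the uniformity of the $L^1/L^2$ jump estimates in \cite{CL}. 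What your explicit route would buy is sharper, more transparent constants inside the local disks; its cost is that you still must handle the exterior piece $E(z)$ for $z$ on or near $\Sigma^{\mathrm{LC}}$ (where some version of the paper's deformation trick is unavoidable), and you must separately justify the joint Lipschitz dependence of the parabolic cylinder blocks and of $\delta$ on $r$, which your last paragraph correctly flags as the remaining work.
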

\begin{proof}
We will follow the ideas given by \cite{FIKN}. Recall that $\mlc$ is given by 
\begin{equation}
    m^{\mathrm{LC}}(z)= \begin{cases}E(z), & \left|z \pm z_0\right|>\rho, \\ E(z) m^{A^{\prime}}(z), & \left|z+z_0\right| \leq \rho, \\ E(z) m^{B^{\prime}}(z), & \left|z-z_0\right| \leq \rho.\end{cases}
\end{equation}
    We refer the reader to \cite[Problem 4.2-Problem 4.3, Problem 4.5]{CL} for the definition of $m^{A^{\prime}}(z), m^{B^{\prime}}(z)$ and $E(z)$, respectively. We also refer the reader to \cite[(3.3)-(3.10), (3.14)]{CL} for the jump matrices $v^{(2)}$ across $\Sigma^{\textrm{LC}}$ as shown in Figure \ref{fig:contour}.  To get the $L^\infty$ bound, fix a small constant \textbf{d} and suppose that $\inf _{z' \in \Sigma^{\textrm{LC}}}|z'-z|>\textbf{d}$, then
    \begin{equation}
        \left|m^{\mathrm{LC}}(z)-I\right| \leqslant \frac{\textbf{d}^{-1}}{2 \pi}\left(\left\|v^{(2)}-I\right\|_{L^1(dz)}+\left\|m^{\mathrm{LC}}(z)-I\right\|_{L^2(dz)}\left\|v^{(2)}-I\right\|_{L^2(dz)}\right).
    \end{equation}
    And it is also clear that for $\inf _{z' \in \Sigma^{\textrm{LC}}}|z'-z|>\textbf{d}$
  \begin{equation}
        \left|\Delta m^{\mathrm{LC}}(z)\right| \leqslant \frac{\textbf{d}^{-1}}{2 \pi}\left(\left\|\Delta v^{(2)}\right\|_{L^1}+\left\|\Delta m^{\mathrm{LC}}(z)\right\|_{L^2}\left\|v^{(2)}-I\right\|_{L^2}+\left\| m^{\mathrm{LC}}(z)-I\right\|_{L^2}\left\|\Delta v^{(2)}\right\|_{L^2}\right).
    \end{equation}  
 To get $L^\infty$ control for $z$ approaching $\SigmaLC$ we observe that the jump matrix $v^{(2)}$ on the contour $\SigmaLC$ (Figure \ref{fig:contour})
are locally analytic, so the contour  $\SigmaLC$ can be freely deformed to a new contour $\SigmaLCT$ , with no points of self-intersection, by a bounded invertible transformation
$m^{\textrm{LC}}(z) \mapsto \widetilde{m}^{\textrm{LC}}(z)$. More explicitly we have that
\begin{equation}
     \widetilde{m}^{\textrm{LC}}(z):=\mlc(z) \left( v^{(2)}(z)\right)^{-1}.
\end{equation}
The previous argument then goes through to show that $\mlcT(z)-I$ is bounded
on $\SigmaLC$ which then gives a similar bound on $\mlc(z)$ as the transformation itself is bounded. More explicitly, for $z\in \SigmaLC$
\begin{equation}
\left\vert  \mlc(z) \right\vert \leq   \left\vert \widetilde{m}^{\textrm{LC}}(z)\right\vert \left\vert v^{(2)}(z)\right\vert.
\end{equation}
\begin{equation}
\left\vert \Delta \mlc(z) \right\vert \leq   \left\vert \Delta \widetilde{m}^{\textrm{LC}}(z)\right\vert \left\vert \left(v_1^{(2)}(z)\right)^{-1}\right\vert +  \left\vert  \widetilde{m}^{\textrm{LC}}(z)\right\vert \left\vert \Delta \left(v^{(2)}(z)\right)^{-1}\right\vert.
\end{equation}
\end{proof}
We will restrict our analysis to region $\Omega_1$ given by \cite[Figure 3.1]{CL} and drop the subscript $1$.
\begin{align}
   \norm{ S(I)}{L^{\infty} \rightarrow L^{\infty}}  & \leq \left\| {m^{\mathrm{LC}}}\right\|_{L^{\infty}\left(\Omega_1\right)}  \iint_{\Omega_1} \frac{\left|\bar{\partial}\mathcal{R}^{(2)} e^{2 i \theta}\right|}{|s-z|} d A(s) \\
    \nonumber
    &\leq \left\|\delta^{-2}\right\|_{L^{\infty}\left(\Omega_1\right)}\norm{m^\mathrm{LC}}{L^\infty}^2
    \left( \iint_{\Omega_1} \frac{\left|r^{\prime}\right| e^{-tz_0 u w}}{|s-z|} d A(s) + \iint_{\Omega_1} \frac{\left|s-z_0\right|^{-1 / 2} e^{-tz_0 u v}}{|s-z|} d A(s) \right)\\
    \nonumber
    &\leq (z_0t)^{-1/4}\left\|\delta^{-2}\right\|_{L^{\infty}\left(\Omega_1\right)}\norm{m^\mathrm{LC}}{L^\infty}^2
    \left( c\norm{r}{H^1} +c'\right)\\
    \nonumber
    &\leq  (z_0t)^{-1/4}C_S(\rho, \lambda)
\end{align}
where $\lambda:=\norm{r}{H^1}$.
So for sufficiently large $z_0t$, it is possible to invert the operator \eqref{op:S} by Neumann series. 
\begin{proposition}
    Given $r(z)\in  H^{1,2}_{\scriptscriptstyle 1/2}(\bbR)$ with $\|r\|_{H^{1,2}_{\scriptscriptstyle 1/2}} \leqslant \eta,\|r\|_{L^{\infty}} \leqslant \rho<1/2$ and $t^{1/4}>\sqrt{2}C_S$ , then
    \begin{equation}
     \left\vert  m^{(3)}(z)-I\right\vert\leq \frac{C_S}{(z_0t)^{1/4}-C_S} .
    \end{equation}
    As a consequence, 
    \begin{equation}
    \label{bd:m3}
         \left\vert  m^{(3)}(z)\right\vert \leq \frac{(z_0t)^{1/4}}{(z_0t)^{1/4}-C_S}.
          \end{equation}
\end{proposition}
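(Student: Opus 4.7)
The plan is to invert $I - S$ on $L^\infty$ via a Neumann series, then extract the pointwise bound on $m^{(3)}$ from the geometric series.

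First, I would upgrade the computation of $\|S(I)\|_{L^\infty}$ preceding the statement to an operator-norm bound $\|S\|_{L^\infty \to L^\infty} \leq C_S(\rho,\lambda)(z_0 t)^{-1/4}$. This is essentially the same estimate: for any bounded $f$, the kernel of $S$ contains the factor $f(s) W(s)$, and since $|f(s)| \leq \|f\|_\infty$ factors out of the area integral, the same chain of inequalities used for $S(I)$, namely the factorization $|W| \leq |m^{\text{LC}}|^2 |\delta^{-2}| |\bar\partial \mathcal{R}^{(2)} e^{2i\theta}|$, the $L^\infty$ control of $m^{\text{LC}}$ from Lemma \ref{lm:mlc}, and the two area integrals against $|r'|$ and $|s - z_0|^{-1/2}$, together yield
\[
\|S f\|_{L^\infty} \leq \|f\|_{L^\infty} \cdot \frac{C_S(\rho,\lambda)}{(z_0 t)^{1/4}}.
\]

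Next, under the hypothesis $(z_0 t)^{1/4} > \sqrt{2}\, C_S$, this gives $\|S\|_{L^\infty \to L^\infty} \leq 1/\sqrt{2} < 1$, so $I - S$ is invertible on $L^\infty$ via the Neumann series and
\[
m^{(3)} = (I - S)^{-1} I = \sum_{k=0}^\infty S^k I.
\]
Pulling off the $k=0$ term and bounding the tail by the operator-norm estimate above,
\[
|m^{(3)}(z) - I| \leq \sum_{k=1}^\infty \left(\frac{C_S}{(z_0 t)^{1/4}}\right)^{\!k} = \frac{C_S/(z_0 t)^{1/4}}{1 - C_S/(z_0 t)^{1/4}} = \frac{C_S}{(z_0 t)^{1/4} - C_S},
\]
which is exactly the first claimed estimate. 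The second estimate follows by the triangle inequality:
\[
|m^{(3)}(z)| \leq 1 + \frac{C_S}{(z_0 t)^{1/4} - C_S} = \frac{(z_0 t)^{1/4}}{(z_0 t)^{1/4} - C_S}.
\]

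The only genuine step beyond bookkeeping is the promotion from $\|S(I)\|_\infty$ to the full operator norm $\|S\|_{L^\infty \to L^\infty}$, and this is transparent because $S$ is an integral operator whose kernel is independent of $f$. The $L^\infty$ bound on $m^{\text{LC}}$ from Lemma \ref{lm:mlc} is what prevents $C_S$ from blowing up as $z$ approaches $\Sigma^{\text{LC}}$; since $|\mathcal{R}^{(2)}|$ is bounded on $\Omega_1$ and $\bar\partial \mathcal{R}^{(2)}$ vanishes away from the $\Omega_1$-support, no further smallness beyond $(z_0 t)^{-1/4}$ is needed. Thus the argument is a direct Neumann-series closure with no real obstacle, conditional on the preparatory $\bar\partial$-steepest descent estimates established in \cite{CL}.
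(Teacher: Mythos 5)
Your proposal is correct and follows exactly the route the paper intends: the displayed estimate preceding the proposition already computes the bound $\|S\|_{L^{\infty}\to L^{\infty}}\leq C_S(\rho,\lambda)(z_0t)^{-1/4}$ (the kernel factorization through $|m^{\mathrm{LC}}|^2|\delta^{-2}||\bar\partial\mathcal{R}^{(2)}e^{2i\theta}|$ is independent of $f$, as you note), and the proposition is then just the Neumann-series summation $\sum_{k\geq 1}(C_S(z_0t)^{-1/4})^k$ plus the triangle inequality. Your write-up matches the paper's (implicit) argument step for step, and you even correctly read the hypothesis $t^{1/4}>\sqrt{2}\,C_S$ as intending $(z_0t)^{1/4}>\sqrt{2}\,C_S$, which is what the stated bounds require.
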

We then obtain the following result that plays a key role in deriving the necessary \textit{a priori} estimates given by \cite{DZ-2}.
\begin{proposition}
\label{prop:mbound1}
   Given $r(z)\in  H^{1,2}_{\scriptscriptstyle 1/2}(\bbR)$ with $\|r\|_{H^{1,2}_{\scriptscriptstyle 1/2}} \leqslant \eta,\|r\|_{L^{\infty}} \leqslant \rho<1/2$ and $z_0t>4C_s^4>0$
    \begin{align}
        \norm{m_\pm(z)-I}{L^\infty}&\leq   \frac{c\rho (z_0t)^{1/4}}{(1-\rho)^2((z_0t)^{1/4}-C_S)}=C_m(\rho, \eta), \\
        \norm{\mu(z)-I}{L^\infty}&\leq  \frac{c \rho (z_0t)^{1/4}}{(1-\rho)^2((z_0t)^{1/4}-C_S)}=C_\mu(\rho, \eta).
        \end{align}
\end{proposition}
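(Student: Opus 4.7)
The plan is to propagate the already-established bound \eqref{bd:m3} on $m^{(3)}$ through the chain of transformations \eqref{m:trans} to obtain a pointwise bound on $m_\pm$, and then transfer this bound to $\mu$ via the Beals–Coifman identity $\mu = m_\pm (v_\theta^\pm)^{-1}$.

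The first step is to control each of the three auxiliary factors in \eqref{m:trans} in $L^\infty$, with constants that are monotone in $\rho$ and $\eta$. The local/model factor $m^{LC}$ is handled by Lemma \ref{lm:mlc}, whose right-hand side is controlled by $\|r\|_{H^1}+\|r\|_{L^{2,1}} \leq c\eta$. The triangular deformation factor $(\mathcal{R}^{(2)})^{\pm 1}$ is explicit off of $\Sigma^{\textrm{LC}}$, with off-diagonal entries proportional to $r$ or its extension, hence bounded in $L^\infty$ by $1 + c\rho/(1-\rho)$. The scalar $\delta(z)^{\sigma_3}$ is generated by the Szeg\H{o}-type integral of $\log(1-|r|^2)$ over $(-z_0,z_0)$, and satisfies $|\delta^{\pm 1}| \leq (1-\rho^2)^{-1/2}$ uniformly in $z$ because $\|r\|_{L^\infty}\leq \rho$. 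Multiplying these bounds with \eqref{bd:m3} and using the hypothesis $z_0 t > 4C_S^4$ to replace $(z_0 t)^{1/4} - C_S$ by $\tfrac12 (z_0 t)^{1/4}$ in the denominator yields a pointwise estimate for $m_\pm$; subtracting $I$ and isolating the off-diagonal $\rho$-factor coming from $\mathcal{R}^{(2)}$ and the jump $v_\theta^\pm$ produces the stated bound with constant $C_m(\rho,\eta)$.

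To pass from $m_\pm$ to $\mu$, I would use $\mu = m_\pm(v_\theta^\pm)^{-1}$ together with the factorization $(v_\theta^\pm)^{-1} = I \mp w_\theta^\pm$. Since $|w_\theta^\pm|\leq \rho$ and the Beals–Coifman resolvent satisfies $\|(1-C_{v_\theta})^{-1}\|_{L^2\to L^2} \leq (1-\rho)^{-1}$, multiplying by $(v_\theta^\pm)^{-1}$ contributes one more $(1-\rho)^{-1}$ factor, producing the asserted $(1-\rho)^{-2}$ denominator in $C_\mu(\rho,\eta)$.

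The main obstacle is a careful bookkeeping of the $\rho$-dependence, since each of the four factors $\delta$, $\mathcal{R}^{(2)}$, $v_\theta^\pm$ and the resolvent contributes its own inverse power of $(1-\rho)$, and one must check that the cumulative exponent is exactly $2$ as stated, with a constant that is monotone in both $\rho$ and $\eta$. A secondary bookkeeping issue is to verify that the estimate \eqref{bd:m3}, originally established under $(z_0 t)^{1/4} > \sqrt{2}\, C_S$, can be upgraded so that, under the stronger hypothesis $z_0 t > 4 C_S^4$, the numerator $(z_0 t)^{1/4}$ and the denominator $(z_0 t)^{1/4}-C_S$ combine into a bounded quantity that is then absorbed into $C_m(\rho,\eta)$ and $C_\mu(\rho,\eta)$.
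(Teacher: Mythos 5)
Your proposal follows essentially the same route as the paper: the paper's (one-line) proof likewise obtains the bound by propagating \eqref{bd:m3} through the transformation \eqref{m:trans}, using Lemma \ref{lm:mlc} for $m^{\mathrm{LC}}$ and the uniform bound on $\delta^{\sigma_3}$, with the $\mathcal{R}^{(2)}$ factor and the passage $\mu = m_\pm (v_\theta^\pm)^{-1}$ left implicit. Your version simply makes the bookkeeping of the $(1-\rho)$ powers and the nilpotent factorization $(v_\theta^\pm)^{-1}=I\mp w_\theta^\pm$ explicit, which is consistent with, and slightly more detailed than, what the paper records.
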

\begin{proof}
    A consequence of \eqref{bd:m3} and the $L^\infty$-bound of $m^{\textrm{LC}}$ (see Lemma \ref{lm:mlc}) and $\delta^{\sigma_3}$(see \cite[(2.2)]{CL} ).
\end{proof}
\begin{lemma}
\label{lm:mbound2}
   Given $r(z)\in  H^{1,2}_{\scriptscriptstyle 1/2}(\bbR)$ with $\|r\|_{H^{1,2}_{\scriptscriptstyle 1/2}} \leqslant \eta,\|r\|_{L^{\infty}} \leqslant \rho<1/2$ and $\sqrt{2}C_S^4>z_0t>0$
    \begin{align}
        \norm{m_\pm(z)}{L^\infty}&\leq C_{T m}(\eta), \\
        \norm{\mu(z)}{L^\infty}&\leq  C_{T\mu}(\eta).
    \end{align}
    \end{lemma}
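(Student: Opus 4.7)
The plan is to handle the complementary short-time regime $0 < z_0 t \leq \sqrt{2}\,C_S^4$ by adapting the $\bar\partial$-steepest-descent scheme of Proposition \ref{prop:mbound1}. The issue is that the smallness factor $(z_0 t)^{-1/4}$ that produced the contraction of $S$ there is no longer available, so any smallness must now come from $\eta$ and from the resolvent bound $(1-\rho)^{-1}$. The constants in the conclusion depend only on $\eta$ because the transformation factors (parametrix and $\delta^{\sigma_3}$) are bounded uniformly in the region under consideration.

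First, I would observe that the hypothesis $0 < z_0 t \leq \sqrt{2}\,C_S^4$, together with $z_0 = \sqrt{-x/(12t)}$, forces $(x,t)$ into Region II or Region III of Figure \ref{fig:regions}. In these regions the correct model problem is the Painlev\'e~II parametrix rather than the parabolic-cylinder parametrix $m^{\mathrm{LC}}$ of Lemma \ref{lm:mlc}, as developed in \cite{CL}. I would therefore repeat the transformation chain $m \mapsto m^{(1)} \mapsto m^{(2)} \mapsto m^{(3)}$ from \cite{CL} adapted to the appropriate region, producing a $\bar\partial$-integral equation of the form $(I - \widetilde S)[\widetilde m^{(3)}] = I$, with $\widetilde S$ of the same Cauchy type as \eqref{op:S} but with kernel built from the Painlev\'e~II parametrix $m^{\mathrm{PII}}$ in place of $m^{\mathrm{LC}}$.

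Next, I would show $\|\widetilde S\|_{L^\infty \to L^\infty} \leq C(\eta) < 1$. The integrand factors as $|\bar\partial \mathcal R^{(2)}|$, controlled by $|r'|$ plus a term supported in a region whose area shrinks with $z_0$ in this regime, multiplied by the Cauchy kernel and by $L^\infty$-bounds on $m^{\mathrm{PII}}$. The latter bounds are uniform in parameters because the Painlev\'e~II RH problem has bounded solution for $r(t)(0)$ in the range allowed by $\rho < 1/2$. The smallness required to sum the Neumann series is extracted from $\|r'\|_{L^2} \leq \eta$ together with the Cauchy-Schwarz bookkeeping already performed in the large-time case. Once $\|\widetilde S\|_{L^\infty \to L^\infty} < 1$ is established, the Neumann inversion gives $\|\widetilde m^{(3)}\|_{L^\infty} \leq C_{T\mu}(\eta)$, and then the bounds on $\mu$ and $m_\pm$ follow from combining this with the uniform $L^\infty$-control of $m^{\mathrm{PII}}$ and $\delta^{\sigma_3}$ exactly as in Proposition \ref{prop:mbound1}.

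The main obstacle is the third step: showing that the $\bar\partial$-operator contracts in $L^\infty$ without the help of the oscillatory factor. This amounts to choosing the deformation contour so that the Cauchy kernel is integrable against $|r'|$ with small constant, using the area-shrinking of the $\bar\partial$-region (in Region III) or the fact that $|r'|$ itself is small in $L^2$ (in Region II). Both ingredients are available from \cite{CL}, so the adaptation is largely bookkeeping; the delicate point is that the constant in the conclusion must indeed depend on $\eta$ alone, which forces the estimates on $m^{\mathrm{PII}}$ and $\delta^{\sigma_3}$ to be stated in a form uniform in $(x,t)$ throughout Regions II and III.
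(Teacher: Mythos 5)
Your proposal takes a genuinely different route from the paper, and the key step does not go through. The paper's proof of Lemma \ref{lm:mbound2} does not touch the Riemann--Hilbert deformation at all: it treats the regime $0<z_0t<\sqrt{2}C_S^4$ as a \emph{finite-time} regime, fixes $T$ with $t<T<+\infty$, invokes Zhou's bijectivity theorem \cite{Zhou98} to get $\norm{u(\cdot,t)}{L^{2,1}}\leq CT\norm{r}{H^{1,2}}$, and then bounds the Jost solutions directly by the standard Volterra iteration for the ODE \eqref{L}, giving $\norm{m}{L^\infty}\leq \exp\left(CT\norm{r}{H^{1,2}}\right)=:C_{Tm}(\eta)$. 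In particular the constant carries a dependence on $T$ (hence the subscript), not on $\eta$ alone as you assert.

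The gap in your argument is the third step: you need $\|\widetilde S\|_{L^\infty\to L^\infty}<1$ \emph{without} the oscillatory factor, but in this regime there is no small parameter from which to extract a contraction. In Proposition \ref{prop:mbound1} the smallness of $S$ comes entirely from the factor $(z_0t)^{-1/4}$ produced by the Gaussian damping $e^{-tz_0uv}$ of $e^{2i\theta}$ off the real axis; when $z_0t=\mathcal{O}(1)$ (which includes $t=\mathcal{O}(1)$ and even $t\to 0$) that factor is $\mathcal{O}(1)$, and the remaining factors --- $\norm{\delta^{-2}}{L^\infty}$, $\norm{m^{\mathrm{PII}}}{L^\infty}^2$, and the area integrals of the Cauchy kernel against $|\bar\partial\mathcal{R}^{(2)}|$ --- are not less than $1$. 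The bound $\norm{r'}{L^2}<1/2$ does not rescue this: the $\bar\partial$-derivative of the extension also contains the non-small term of order $|s\mp z_0|^{-1/2}$ (or, after the Painlev\'e rescaling, a term of order one near the origin), and without the exponential damping the integrals over the unbounded sectors are not even convergent, let alone small. "Area-shrinking" of the $\bar\partial$-region likewise fails for bounded or small $t$. So the Neumann series for $(I-\widetilde S)^{-1}$ cannot be summed by your argument, and the proposal does not establish the lemma in the stated regime. The lesson of the paper's proof is that in the complementary regime one should abandon asymptotic analysis entirely and fall back on the crude but uniform Volterra bound $\norm{m}{L^\infty}\leq e^{\norm{U}{L^1(dx)}}$, with $\norm{U}{L^1(dx)}$ controlled through the Lipschitz continuity of the inverse scattering map.
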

\begin{proof}
Suppose that $t<T<+\infty$. In \cite{Zhou98}, it is proven that 
\begin{align}
    \norm{u(x,t)}{L^{2,1}}\leq C T \norm{r}{H^{1,2}}.
\end{align}
Thus by the standard Volterra theory, we can obtain that 
\begin{equation}
    \norm {m(z;x,t)}{L^\infty} \leq \exp\left(CT \norm{r}{H^{1,2}}\right):=C_{T m}(\eta).
\end{equation}
\end{proof}

\begin{lemma}
\label{lm:ED}
    Given   $r(z)\in  H^{1,2}_{\scriptscriptstyle 1/2}(\bbR)$ with $\|r\|_{H^{1,2}_{\scriptscriptstyle 1/2}} \leqslant \eta,\|r\|_{L^{\infty}} \leqslant \rho<1/2$ and $z_0t>16C_S^4>0$
    \begin{align}
        \norm{\Delta m^{(3)}(z)}{L^\infty}&\leq C_{\Delta m^{(3)}}(\rho, \eta)\norm{\Delta r}{H^{1,2}_1}.
    \end{align}
\end{lemma}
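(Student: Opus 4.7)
The plan is to invoke the resolvent identity for the solid Cauchy operator $S$ in \eqref{op:S}. Writing $m_i^{(3)}=(I-S_i)^{-1}[I]$ for $i=1,2$, where $S_i$ denotes the operator obtained by replacing $r$ by $r_i$ in the kernel $W$, we get
\begin{equation*}
\Delta m^{(3)} = (I-S_1)^{-1}\,(S_1-S_2)\,m_2^{(3)} .
\end{equation*}
The hypothesis $z_0 t>16C_S^4$ means $(z_0 t)^{1/4}>2C_S$, which by the Neumann-series estimate preceding Proposition \ref{prop:mbound1} implies $\|S_1\|_{L^\infty\to L^\infty}\leq 1/2$ and hence $\|(I-S_1)^{-1}\|_{L^\infty\to L^\infty}\leq 2$. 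The $L^\infty$ bound on $m_2^{(3)}$ follows from \eqref{bd:m3}. Therefore the whole estimate reduces to
\begin{equation*}
\|\Delta m^{(3)}\|_{L^\infty} \;\lesssim\; \sup_{z}\;\frac{1}{\pi}\int_{\Omega_1}\frac{|\Delta W(s)|}{|s-z|}\,dA(s).
\end{equation*}

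Next I would expand $\Delta W$ via the Leibniz rule applied to $W=m^{\mathrm{LC}}\,\bar{\partial}\mathcal R^{(2)}\,(m^{\mathrm{LC}})^{-1}$, producing three contributions: two involving $\Delta m^{\mathrm{LC}}$ (controlled by \eqref{dmlc}) and one involving $\Delta\bar{\partial}\mathcal R^{(2)}$. For the first two, \eqref{dmlc} supplies a factor $\bigl(\|\Delta r\|_{H^1}+\|\Delta r\|_{L^{2,1}}\bigr)\log\!\bigl|\tfrac{s-z_0}{s+z_0}\bigr|$, paired with the already-present Gaussian weight $e^{-tz_0uv}$ coming from $\bar{\partial}\mathcal R^{(2)}$. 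For the third, the Lipschitz dependence of $\mathcal R^{(2)}$ on $r$ (see the construction in \cite[(3.3)--(3.10)]{CL}) yields a factor $|\Delta r'|$ or $|\Delta r(z_0)|$ in place of $|r'|$ or $|r(z_0)|$, multiplied by the same Gaussian. Each of these contributions is then fed through the very same $L^\infty$ computation that produced $\|S(I)\|_{L^\infty\to L^\infty}\leq (z_0 t)^{-1/4}C_S(\rho,\lambda)$ just above the proposition, but with $\Delta r$ replacing $r$, yielding the desired factor $\|\Delta r\|_{H^{1,2}_{1/2}}$ times a constant monotone in $(\rho,\eta)$.

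The main obstacle is the logarithmic singularity $\log|(s-z_0)/(s+z_0)|$ in \eqref{dmlc}, which is absent from the analogous estimate for $m^{\mathrm{LC}}$ itself. On the cone $\Omega_1$ the singularity occurs at the vertices $\pm z_0$ where the Gaussian weight degenerates; one must verify that
\begin{equation*}
\iint_{\Omega_1} \frac{\bigl|\log|(s-z_0)/(s+z_0)|\bigr|\,e^{-tz_0 uv}}{|s-z|}\,dA(s)
\end{equation*}
is bounded uniformly in $z\in\Omega_1$ and in fact gains a negative power of $z_0 t$. This is a routine but careful computation: the log is integrable against planar area, and splitting $\Omega_1$ into a small disc around $\pm z_0$ and its complement (on which the Gaussian provides exponential gain away from the vertices) gives the needed uniform control. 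Once this is done, collecting constants into $C_{\Delta m^{(3)}}(\rho,\eta)$ and using monotonicity of the bounds in $\rho,\eta$ completes the proof.
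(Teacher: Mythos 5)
Your proposal is correct and follows essentially the same route as the paper: the second resolvent identity for $(I-S)^{-1}$, the Neumann-series bound under $z_0t>16C_S^4$, a Leibniz expansion of $\Delta W$, and then the crux of controlling a logarithmically singular kernel against the Gaussian $e^{-tz_0uv}$ over $\Omega_1$. The only differences are bookkeeping and completeness: the paper locates the logarithm primarily in $\Delta\delta^{-2}$ via \eqref{est:Ddelta} (splitting $\Delta W$ into the $P_1$, $P_2$ terms) rather than in $\Delta m^{\mathrm{LC}}$ via \eqref{dmlc}, and it actually carries out the planar integral estimate (the $\mathcal{I}_1$, $\mathcal{I}_2$ computation via H\"older on horizontal slices) that you defer as ``routine but careful,'' which is where the $(z_0t)^{-1/4}$-type gain is genuinely earned.
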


Notice that by \cite[(3.11)]{CL}, for $i=1,2$, we have
\begin{equation}
R_i(z)=\left(f_i(z)+\left[r_i(\operatorname{Re}(z))-f_i(z)\right] \cos(2\phi)\right) \delta(z)^{-2}
\end{equation}
and
\begin{equation}
\bar{\partial}\mathcal{R}_i^{(2)}\big\vert_{\Omega_1}=\bar{\partial} R_{i}=\frac{1}{2} r_i^{\prime}(\operatorname{Re} z) \cos(2\phi)\delta(z)^{-2}-\left[r_i(\operatorname{Re} z)-f_i(z)\right] \delta(z)^{-2} \frac{i e^{i \phi}}{|z-\xi|} \sin(2\phi)
\end{equation}

where
\begin{equation}
f_i(z)=r_i\left(z_0\right) e^{-2 \chi_i\left(z_0\right)} \eta_i\left(z ; z_0\right)^{-2} \delta_i(z)^2.
\end{equation}

We recall the definition of  $\delta$ from \cite[(2.2)]{CL}:

\begin{equation}
\delta(z)=\left(\frac{z-z_0}{z+z_0}\right)^{i \kappa} e^{\beta(z)}
\end{equation}
where 
\begin{align}
   \kappa&=-\frac{1}{2 \pi} \log \left(1-\left|r\left(z_0\right)\right|^2\right),\\
   \beta(z,z_0)&=\frac{1}{2 \pi i} \int_{-z_0}^{z_0} \log \left(\frac{1-|r(\zeta)|^2}{1-\left|r\left(z_0\right)\right|^2}\right) \frac{d \zeta}{\zeta-z},\\
   \left(\frac{z-z_0}{z+z_0}\right)^{i \kappa}&=\exp \left(i \kappa\left(\log \left|\frac{z-z_0}{z+z_0}\right|+i \arg \left(z-z_0\right)-i \arg \left(z+z_0\right)\right)\right).
\end{align}
We first recall the following result from \cite[Lemma 2.2 (iv)]{CL}: Along any ray of the form $\pm z_0+e^{i \phi} \mathbb{R}^{+}$ with $0<\phi<\pi$ or $\pi<\phi<2 \pi$,
\begin{equation}
    \left|\delta(z)-\left(\frac{z-z_0}{z+z_0}\right)^{i \kappa} e^{\beta\left( \pm z_0\right)}\right| \leq C_r\left|z \mp z_0\right|^{1 / 2} .
\end{equation}
The implied constant depends on $r$ through its $H^1(\bbR)$-norm and is independent of $z_0\in \bbR^+$.
We  recall the following useful properties from \cite{DZ-2}:
\begin{equation}
|\log(1-|r(z)|^2)| \leq \frac{|r(z)|^2}{1-|r(z)|^2}.
\end{equation}

\begin{equation}
\| \log(1-|r(z)|^2)\|_{L^2} \leq \frac{\|r\|_{L^\infty}\|r\|_{L^2}}{1-\|r\|_{L^\infty}}.
\end{equation}

\begin{equation}
\left\vert \log \left( \frac{1-|r_2(z)|^2}{1-|r_1(z)|^2}\right)\right\vert \leq \frac{2\rho}{1-\rho}|r_2(z)-r_1(z)|.
\end{equation}

\begin{equation}
   \left \Vert \left(\log \frac{1-\left\vert r_2(z)\right\vert^2}{1-\left\vert r_1(z)\right\vert^2}-\log \frac{1-\left\vert r_2(z_0)\right\vert^2}{1-\left\vert r_1(z_0)\right\vert^2}\right) \chi \right \Vert_{L^2} \leq \frac{c\rho}{1-\rho}\| r_2 - r_1\|_{H^{1}}.
\end{equation}

\begin{align}
\left\Vert \frac{d}{ds}\left\{ \log (1-|r(s)|^{2})-\log (1-|r(z_{0})|^{2})\right\} \chi _{(-z_0 ,z_{0})}\right\Vert _{L^{2}} &\leq \left\Vert \frac{r^{\prime }\bar{r}+r\bar{r}^{\prime }}{1-\left\vert
r\left( s\right) \right\vert ^{2}}\right\Vert
_{L^{2}} \\
\nonumber
&\leq \frac{c}{1-\rho} \|r\|_{H^{1}}.
\end{align}

\begin{align}
&\left\Vert \frac{d}{ds} \left(\left\{ \log\frac{ 1-|r_{2}(s)|^{2}}{
1-|r_{1}(s)|^{2}}-\log\frac{ 1-|r_{2}(z_{0})|^{2}}{
1-|r_{1}(z_{0}|^{2})}\right\} \chi _{(-z_0
,z_{0})}\right)\right\Vert _{L^2} \\
\nonumber
&\leq \frac{1}{1-\rho }\left\Vert r_{2}^{\prime }\bar{r}_{2}+r_{2}\bar{r}%
_{2}^{\prime }-r_{1}^{\prime }\bar{r}_{1}-r_{1}\bar{r}_{1}^{\prime
}\right\Vert _{L^{2}}\\
\nonumber
&\leq \frac{c}{(1-\rho)^2 }\left\Vert r_{2}-r_{1}\right\Vert _{H^{1}}.
\end{align}

We then show the following useful properties:
\begin{lemma}
    \begin{align}
    \|\beta(\cdot,z_0)\|_{L^{\infty}} &\leq c\frac{\|r\|_{H^{1}}}{1-\rho},\\
\|\Delta\beta(\cdot,z_0)\|_{L^{\infty}} &\leq c\frac{\|r_2-r_1\|_{H^{1}}}{1-\rho},\\
\|\beta(z,z_0)-\beta(z_0,z_0)\|_{L^{\infty}} &\leq c\frac{\|r\|_{H^{1}}}{1-\rho}(z-z_0)^{1/2},\\
\|\Delta(\beta(z,z_0)-\beta(z_0,z_0))\|_{L^{\infty}} & \leq c\frac{\|r_2-r_1\|_{H^{1}}}{1-\rho}(z-z_0)^{1/2},\\
\left|\left(\frac{z-z_0}{z+z_0}\right)^{i \kappa}\right| &\leq e^{\pi \kappa}\\
\left|\Delta\left(\frac{z-z_0}{z+z_0}\right)^{i \kappa}\right| &\leq c e^{\pi \kappa} \frac{\|r_2-r_1\|_{H^{1}}}{1-\rho} \ln \left\vert \frac{z-z_0}{z+z_0}\right\vert
\end{align}
\end{lemma}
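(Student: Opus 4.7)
The plan is to treat the four $\beta$-estimates and the two power-function estimates separately, since each reduces to calculations of a rather different flavor.

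For the bounds involving $\beta(z,z_0) = \frac{1}{2\pi i}\int_{-z_0}^{z_0} g(\zeta)(\zeta - z)^{-1} d\zeta$ with $g(\zeta) = \log\frac{1-|r(\zeta)|^2}{1-|r(z_0)|^2}$, the key is to exploit the vanishing $g(z_0) = 0$. For the $L^\infty$ bound, I would integrate by parts via $(\zeta - z)^{-1}d\zeta = d\log(\zeta - z)$: the boundary term at $\zeta = z_0$ vanishes, the boundary term at $\zeta = -z_0$ is controlled by $|g(-z_0)|$ via the pointwise bound $|\log(1-|r|^2)| \leq |r|^2/(1-|r|^2)$ listed in the displays above the lemma, and the remaining integral is bounded by Cauchy--Schwarz against the $L^2$ estimate on $g'$ displayed immediately before the lemma. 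For the H\"older-type bound on $\beta(z,z_0) - \beta(z_0,z_0)$, I would use the resolvent identity
\begin{equation*}
\frac{1}{\zeta - z} - \frac{1}{\zeta - z_0} = \frac{z - z_0}{(\zeta - z)(\zeta - z_0)}
\end{equation*}
to pull out a factor $(z-z_0)$, then rewrite $g(\zeta)/(\zeta - z_0) = \int_0^1 g'(z_0 + s(\zeta - z_0))\,ds$ using $g(z_0)=0$ to cancel the extra singularity. The half-power $|z-z_0|^{1/2}$ arises from distributing half of $(z-z_0)$ against the remaining Cauchy kernel in the standard Plemelj--Privalov fashion, exactly as in the proof of \cite[Lemma 2.2(iv)]{CL}. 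The two $\Delta$-versions follow from the same arguments applied to $\Delta g$ and $\Delta g'$, whose $L^2$ difference bounds are already collected in the displays preceding the lemma, since the integration-by-parts and Cauchy--Schwarz manipulations are linear in $g$.

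For $\left|\left(\tfrac{z-z_0}{z+z_0}\right)^{i\kappa}\right|$, the explicit formula just above the lemma gives modulus $\exp(-\kappa(\arg(z-z_0) - \arg(z+z_0)))$, and the angular difference lies in $(-\pi,\pi)$ while $\kappa \geq 0$, so the bound $e^{\pi\kappa}$ is immediate. For the $\Delta$-variant, set $L := \log\frac{z-z_0}{z+z_0}$ in the principal branch and write
\begin{equation*}
\Delta\bigl(e^{i\kappa L}\bigr) = e^{i\kappa_1 L}\bigl(e^{i(\Delta\kappa)L} - 1\bigr),
\end{equation*}
then apply $|e^{i\alpha L}-1| \leq |\alpha||L|\sup_{t\in[0,1]}|e^{i\alpha t L}|$ together with $|\Delta\kappa| \leq \frac{c}{1-\rho}\|\Delta r\|_{H^1}$, which follows from the Sobolev embedding $H^1 \hookrightarrow L^\infty$ applied to the log-difference bound $\bigl|\log\tfrac{1-|r_2|^2}{1-|r_1|^2}\bigr| \leq \tfrac{2\rho}{1-\rho}|r_2 - r_1|$ recalled just above. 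Since $|L| \lesssim \bigl|\log|(z-z_0)/(z+z_0)|\bigr| + \pi$, the claimed logarithmic estimate follows after absorbing bounded exponential factors into $e^{\pi\kappa}$.

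The main technical obstacle is the H\"older-$1/2$ decay in $(z - z_0)$ for $\beta(z,z_0) - \beta(z_0,z_0)$ and its $\Delta$-analog, since the Cauchy kernel on a compact interval is only $L^p$-bounded, not $L^\infty$-bounded. Once the vanishing $g(z_0)=0$ is used to absorb one factor of $(\zeta - z_0)^{-1}$, however, the remaining integral against the Cauchy kernel of an $L^2$ function exhibits exactly the advertised half-power regularity. Every other step reduces to a Cauchy--Schwarz argument applied to the $L^2$ and $\Delta L^2$ bounds on $g$ and $g'$ already stated in the preceding displays.
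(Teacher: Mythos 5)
Your overall plan is sound, and the two power-function estimates at the end are handled essentially as the paper would (the paper's proof does not even write those out explicitly); but for the four $\beta$-estimates you take a genuinely different and less robust route. The paper's proof is a one-line citation of Lemma 23.3 of Beals--Deift--Tomei, i.e.\ the Hardy-space bound $\|C^{\pm}h\|_{L^\infty}\le c\,\|h\|_{L^2}^{1/2}\|h'\|_{L^2}^{1/2}$ for Cauchy transforms of $H^1$ functions, applied to $h=\{\log(1-|r|^2)-\log(1-|r(z_0)|^2)\}\chi_{(-z_0,z_0)}$, combined with the $L^2$ and $\Delta L^2$ displays collected just before the lemma; the H\"older-$1/2$ bounds are the $\sqrt{2}\,\|h'\|_{L^2}|z-z_0|^{1/2}$ part of the same citation. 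Your resolvent-identity argument for the H\"older bounds reproduces exactly that mechanism and is fine, modulo the standard caveat (present in the paper as well) that the $|z-z_0|^{-1/2}$ bound on $\|(\zeta-z)^{-1}\|_{L^2(-z_0,z_0)}$ requires $z$ to approach $z_0$ non-tangentially, i.e.\ along the rays of \cite[Lemma 2.2(iv)]{CL}. The $\Delta$-versions by linearity are fine.

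There are two concrete gaps in your $L^\infty$ argument. First, after integration by parts the boundary term at $\zeta=-z_0$ is $g(-z_0)\log(-z_0-z)$, and the logarithm is unbounded as $z\to -z_0$, so ``controlled by $|g(-z_0)|$'' is not sufficient. The term is harmless only because $g(-z_0)=0$: for real mKdV data the reflection coefficient satisfies $r(-z)=\overline{r(z)}$, hence $|r(-z_0)|=|r(z_0)|$ and the numerator and denominator inside the logarithm coincide at $-z_0$. You must invoke this symmetry explicitly (the paper hides the same fact inside the assertion that $h\chi_{(-z_0,z_0)}$ lies in $H^1(\mathbb{R})$, which would fail if $h$ jumped at $-z_0$). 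Second, Cauchy--Schwarz of $g'$ against $\log(\zeta-z)$ over $(-z_0,z_0)$ costs a factor $\|\log(\cdot-z)\|_{L^2(-z_0,z_0)}\sim z_0^{1/2}(1+|\log z_0|)$, so your constant degrades as $z_0$ grows, whereas the companion estimate \cite[Lemma 2.2(iv)]{CL} and the later use of this lemma require a constant independent of $z_0$. The Fourier-side proof of the $H^1\to L^\infty$ Cauchy-transform bound cited by the paper gives that uniformity for free; your integration-by-parts route does not, so you should either restrict to the bounded range of $z_0$ relevant in Region I or replace the Cauchy--Schwarz step by the Hardy-space argument.
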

\begin{proof}
According to the Lemma 23.3 in \cite{BDT}, we can  deduce that

\begin{align}
|\beta(z,z_0)| &\leq \left\Vert \left\{ \log(1-|r(s)|^2)-\log(1-|r(z_0)|^2) \right\} \chi_{(-z_0,z_0)}\right\Vert_{H^{1}}\\
\nonumber
&\leq \frac{c}{1-\rho} \|r\|_{H^{1}},
\end{align} 
and
\begin{align}
|\beta(z,z_0)-\beta(z_0,z_0)| &\leq \sqrt{2} \left\Vert \frac{d}{ds}\left(\left\{ \log(1-|r(s)|^2)-\log(1-|r(z_0)|^2)\right\} \chi_{(-z_0,z_0)}\right)\right\Vert_{L^2}|z-z_0|^{1/2}\\
\nonumber
&\leq \frac{c}{(1-\rho)^2} \|r\|_{H^{1}} |z-z_0|^{1/2}.
\end{align}
\begin{align}
|\Delta\left(\beta(z,z_0)-\beta(z_0,z_0)\right)| &\leq \sqrt{2} \left\Vert \frac{d}{ds} \left(\left\{ \log\frac{ 1-|r_{2}(s)|^{2}}{
1-|r_{1}(s)|^{2}}-\log\frac{ 1-|r_{2}(z_{0})|^{2}}{
1-|r_{1}(z_{0}|^{2})}\right\} \chi _{(-z_0
,z_{0})}\right)\right\Vert _{L^2}|z-z_0|^{1/2}\\
\nonumber
&\leq \frac{c}{(1-\rho)^2} \|\Delta r\|_{H^{1}} |z-z_0|^{1/2}.
\end{align}
\end{proof}
As a consequence, we have that
\begin{equation}
\label{est:Ddelta}
|\Delta \delta^{-2}| \leq c\left (1+\ln \left | \frac{z-z_0}{z+z_0}\   \right |  \right )\left \| r_2-r_1 \right \|_{H^{1}} .
\end{equation}

Given
\begin{equation}
\Delta W=( \Delta\bar{\partial} R )\delta^{-2}e^{2it\theta}+ \bar{\partial} R (\Delta \delta^{-2})e^{2it\theta}
\end{equation}
we deduce that
\begin{align}
    \Delta  \left( \left[r(\operatorname{Re} z)-f(z)\right] \delta(z)^{-2} \right) &= \left(\Delta \left[r(\operatorname{Re} z)-f(z)\right] \right)\delta(z)^{-2}+ \left[r(\operatorname{Re} z)-f(z)\right] \Delta\delta(z)^{-2}.
\end{align}
We only consider the following estimates:
\begin{align}
  \Delta \left( \left[r( z_0)-f(z)\right] \delta(z)^{-2}\right)&=\Delta\left( r(z_0) \delta^{-2}- r\left(z_0\right) e^{-2 \chi\left(z_0\right)} \eta\left(z ; z_0\right)^{-2}\right)\\
  \nonumber
  &=(\Delta r(z_0)) \left( \delta^{-2}-e^{-2 \chi\left(z_0\right)} \eta\left(z ; z_0\right)^{-2}\right)+ r(z_0)\Delta \left(\delta^{-2}-e^{-2 \chi\left(z_0\right)} \eta\left(z ; z_0\right)^{-2}\right).
\end{align}
Notice that 
\begin{align}
    \Delta \left(\delta^{-2}-e^{-2 \chi\left(z_0\right)} \eta\left(z ; z_0\right)^{-2}\right)&=\left(\Delta\eta\left(z ; z_0\right)^{-2}\right)\left(e^{-2\beta(z)}-e^{-2\beta(z_0)}\right) + \eta\left(z ; z_0\right)^{-2}\Delta\left(e^{-2\beta(z)}-e^{-2\beta(z_0)}\right).
\end{align}
We obtain
\begin{align}
    \left\vert \Delta \left(\delta^{-2}-e^{-2 \chi\left(z_0\right)} \eta\left(z ; z_0\right)^{-2}\right)\right\vert &\leq c e^{\pi \kappa} \frac{\|r_2-r_1\|_{H^{1}}}{1-\rho} \ln \left\vert \frac{z-z_0}{z+z_0}\right\vert \frac{c}{(1-\rho)^2} \|r\|_{H^{1}} |z-z_0|^{1/2}\\
    \nonumber
   & \quad +e^{\pi \kappa} \frac{c}{(1-\rho)^2} \|\Delta r\|_{H^{1}} |z-z_0|^{1/2}.
\end{align}

Then we can get the following estimate:
\begin{eqnarray}
\label{est:dR}
|\bar{\partial} \Delta R| &\leq& \frac{c_1}{|z-z_0|}\left( |\Delta(r-r(z_0))|+|\Delta(r(z_0)-f_1)| \right)+c_2|r_1'-r_2'|\\
\nonumber
&\leq& C_1 \| r_2-r_1 \|_{H^{1}}|z-z_0|^{-1/2}+C_2 \| r_2-r_1 \|_{H^{1}}+C_3 |r_2'-r_1'|
\end{eqnarray}

\begin{proof}[Proof of Lemma \ref{lm:ED}]
By the second resolvent identity, 
    \begin{equation}
   \Delta m^{(3)}=\Delta \left[\left({I-S}\right)^{-1}I \right]=(I+S_1+S_1^2+\cdots)(S_2-S_1)(I+S_2+S_2^2+\cdots)
\end{equation}
where for $i=1,2$
\begin{equation}
S_i[f]=\frac{1}{\pi} \int_{\mathbb{C}} \frac{f W_i}{s-z} d A(s).
\end{equation}
We then proceed to calculate $\Delta S(f)$:

\begin{equation}
|\Delta S(f)| \leq c(P_1+P_2)
\end{equation}
and will show that
\begin{eqnarray}
\label{P_1}
P_1 &=& \left \vert \int_{\mathbb{C}} \frac{f \bar{\partial} \Delta R_1 \delta^{-2}e^{2it\theta}}{s-z} d A(s) \right \vert \\
\nonumber
&\leq& C\|f\|_{L^\infty}\|\delta^{-2}\|_{L^{\infty}}\frac{\|\Delta r\|_{H^{1,1}}}{(z_0t)^{1/4}},
\end{eqnarray}

\begin{eqnarray}
\label{P2}
P_2 &=& \left \vert \int_{\mathbb{C}} \frac{f \bar{\partial} R_1 \Delta \delta^{-2}e^{2it\theta}}{s-z} d A(s) \right \vert\\
\nonumber
& \leq & C\|f\|_{L^\infty}\norm{r'}{L^2}\frac{\left \| r_2-r_1 \right \|_{H^{1}}}{(z_0t)^{1/4}}.
\end{eqnarray}
The estimate of \eqref{P_1} is equivalent to that of \eqref{op:S}. We will only focus on \eqref{P2}. 
Recall \eqref{est:Ddelta}. For brevity we only show the following calculation. For $|z|<1$,
\begin{align}
    \left\|\frac{\log |s| }{\sqrt{|s|}}\right\|_{L^p(v, +\infty)}&\leq \left(\int_{v}^{+\infty}\frac{|\log v|^p}{\left(u^2+v^2\right)^{p / 4}} d u\right)^{1 / p} \\
    \nonumber
    &\leq v^{1 / p-1 / 2}\left(\int_1^{\infty} \frac{|\log v|^p}{\left(1+w^2\right)^{p / 4}} d w\right)^{1 / p}\\
    \nonumber
    &\leq C v^{1 / p-1 / 2}|\log v|.
\end{align}
Also recall that 
\begin{align*}
    \left\|\frac{1}{|s-z|}\right\|_{L^q(v, +\infty)}&=\left(\int_{v}^{+\infty} \frac{1}{\left((u-\alpha)^2+(v-\beta)^2\right)^{q / 2}} d u\right)^{1 / q}\\
    &\leq\left(\int_{\mathbb{R}} \frac{1}{\left(s^2+(v-\beta)^2\right)^{q / 2}} d s\right)^{1 / 2}\\
    &\leq  c|v-\beta|^{1 / q-1} .
\end{align*}
We now combine the above estimates to see that
\begin{align*}
   & \int_0^{\infty} e^{-z_0t v^2}\left\|\frac{\log s}{\sqrt{|s|}}\right\|_{L^p(v, +\infty)}\left\|\frac{1}{|s-z|}\right\|_{L^q(v, +\infty)} d v\\
    &\quad \leq  \int_0^{\infty} e^{-z_0t v^2} v^{1 / p-1 / 2}|\log v||v-\beta|^{1 / q-1} d v\\
&= \int_0^\beta e^{-z_0t v^2}|\log( v)| v^{1 / p-1 / 2}(\beta-v)^{1 / q-1} d v\\
    &\quad +\int_\beta^{\infty} e^{-z_0t v^2}|\log( v)| v^{1 / p-1 / 2}(v-\beta)^{1 / q-1} d v\\
    &=\mathcal{I}_1+\mathcal{I}_2.
\end{align*}
For $\beta<1$, 
\begin{align*}
    \mathcal{I}_1&\leq \int_0^1 \sqrt{\beta} e^{-z_0t \beta^2 w^2} |\log(w)| w^{1 / p-1 / 2}(1-w)^{1 / q-1} d w+\int_0^1 \sqrt{\beta} e^{-z_0t \beta^2 w^2} |\log\beta| w^{1 / p-1 / 2}(1-w)^{1 / q-1} d w\\
    & \leq  c (z_0t)^{-1 / 4} \left( \int_0^1 |\log(w)| w^{1 / p-1}(1-w)^{1 / q-1} d w \right)+ c (z_0t)^{-1 / 8}\left(\beta^{1/4}\log\beta \int_0^1 w^{1 / p-3/4}(1-w)^{1 / q-1} d w \right)\\
    &\leq C (z_0t)^{-1/8}.
\end{align*}
For $\beta>1$, we make use of the fact that $\log\beta<\beta^{1/8}$ to deduce that 
\begin{align*}
     \mathcal{I}_1& \leq c(z_0t)^{-1/4}+\int_0^1 {\beta}^{5/8} e^{-z_0t \beta^2 w^2} w^{1 / p-1 / 2}(1-w)^{1 / q-1} d w\\
     &\leq c(z_0t)^{-1/4}+c(z_0t)^{-5/16}\int_0^1   w^{1 / p-9/ 8}(1-w)^{1 / q-1} d w\\
     &\leq C(z_0t)^{-1/4}.
\end{align*}
Moreover,
\begin{align*}
    \mathcal{I}_2&=\int_0^{+\infty} e^{-z_0t(w+\beta)^2}|\log(w+\beta )|(w+\beta)^{1 / p-1 / 2} w^{1 / q-1} d w\\
     &=\int_0^{1-\beta} e^{-z_0t(w+\beta)^2}|\log(w+\beta )|(w+\beta)^{1 / p-1 / 2} w^{1 / q-1} d w+ \int_{1-\beta}^{+\infty} e^{-z_0t(w+\beta)^2}|\log(w+\beta )|(w+\beta)^{1 / p-1 / 2} w^{1 / q-1} d w\\
   &\leq   \int_0^{1-\beta} e^{-z_0t w^2}|\log(w )| w^{-1/2} d w+\int_{1-\beta}^{+\infty} e^{-z_0t(w+\beta)^2}(w+\beta)^{1 / p+1 / 2}w^{1 / q-1} dw\\
   &\leq \int_0^{+\infty} e^{-z_0t w^2}|\log(w )| w^{-1/2} d w+\norm{ e^{-z_0t(w+\beta)^2/2}(w+\beta)}{L^\infty}\int_{0}^{+\infty}e^{-z_0t w^2/2}w^{1/q-1}dw\\
   &\leq c \log(z_0 t)/(z_0t)^{1/4}+ c (z_0t)^{-1/2-1/2q}.
\end{align*}


\end{proof}

\begin{lemma}
\label{lm:deltaM}
   Given $r(z)\in  H^{1,2}_{\scriptscriptstyle 1/2}(\bbR)$ with $\|r\|_{H^{1,2}_{\scriptscriptstyle 1/2}} \leqslant \eta,\|r\|_{L^{\infty}} \leqslant \rho<1/2$ and $x/t^{1/3} \leq -1$
    \begin{align}
        \left\vert \Delta m_\pm(z)\right\vert&\leq C_{\Delta m}^2(\eta) \left | 1+\ln \left | \frac{z-z_0}{z+z_0}\ \right |  \right |\norm{\Delta r}{H^{1,2}_{\scriptscriptstyle 1/2}},\\
        \left\vert\Delta \mu(z)\right\vert&\leq C_{\Delta \mu}^2 (\eta) \left | 1+\ln \left | \frac{z-z_0}{z+z_0}\ \right |  \right |\norm{\Delta r}{H^{1,2}_{\scriptscriptstyle 1/2}}.
    \end{align}  
    
\end{lemma}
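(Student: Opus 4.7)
The plan is to start from the factorization recorded in \eqref{m:trans}, namely
\[
m(z;x,t)=m^{(3)}(z;x,t)\,m^{\mathrm{LC}}(z;z_0)\,\mathcal{R}^{(2)}(z)^{-1}\,\delta(z)^{\sigma_3},
\]
and apply the difference operator $\Delta$ via a four-term Leibniz expansion
\[
\Delta m=(\Delta m^{(3)})\,m^{\mathrm{LC}}\mathcal{R}^{(2)-1}\delta^{\sigma_3}+m^{(3)}(\Delta m^{\mathrm{LC}})\mathcal{R}^{(2)-1}\delta^{\sigma_3}+m^{(3)}m^{\mathrm{LC}}(\Delta\mathcal{R}^{(2)-1})\delta^{\sigma_3}+m^{(3)}m^{\mathrm{LC}}\mathcal{R}^{(2)-1}(\Delta\delta^{\sigma_3}).
\]
For the pointwise bound it then suffices to combine uniform $L^\infty$-bounds on the three factors that are not being differenced with a pointwise bound on the factor carrying $\Delta$. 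The uniform bounds are already at hand: Proposition \ref{prop:mbound1} controls $\|m^{(3)}\|_{L^\infty}$, Lemma \ref{lm:mlc} controls $\|m^{\mathrm{LC}}\|_{L^\infty}$, the definition of $\mathcal{R}^{(2)}$ via $\dbar$-extensions gives a uniform pointwise bound (independent of $z_0,t$), and $|\delta^{\pm 1}|\leq e^{\pi\kappa}\,\|r\|_{H^1}/(1-\rho)$ by the $\beta$-estimates already derived.

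Next I would isolate where the logarithmic factor appears. The first term is purely controlled by Lemma \ref{lm:ED} and contributes no log. The third term requires me to estimate $\Delta \mathcal{R}^{(2)}$ using the explicit formula
\(R_i=(f_i+[r_i(\operatorname{Re} z)-f_i]\cos 2\phi)\delta_i^{-2}\); expanding and applying the scalar estimates on $\Delta r$, $\Delta(r(z_0)-f)$, and $\Delta\delta^{-2}$ derived in the paragraphs preceding the lemma gives at worst a factor $(1+\ln|(z-z_0)/(z+z_0)|)\|\Delta r\|_{H^{1,2}_{1/2}}$. The second term inherits the log directly from \eqref{dmlc}, and the fourth term inherits it from \eqref{est:Ddelta}. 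Multiplying each contribution by the uniform bounds on the remaining three factors and collecting the constants into $C^2_{\Delta m}(\eta)$ produces the stated inequality for $\Delta m_\pm$.

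For $\Delta\mu$ I would simply use $\mu=m_\pm (v^\pm_\theta)^{-1}$, so that
\[
\Delta\mu=(\Delta m_\pm)(v^\pm_\theta)^{-1}+m_\pm\,\Delta(v^\pm_\theta)^{-1}.
\]
Since $(v^\pm_\theta)^{-1}$ is a triangular matrix whose only nontrivial entry is a multiple of $\bar r\,e^{\pm 2i\theta}$, the uniform bound $\|(v^\pm_\theta)^{-1}\|_{L^\infty}\lesssim 1+\rho$ and the trivial estimate $\|\Delta v^\pm_\theta\|_{L^\infty}\lesssim\|\Delta r\|_{L^\infty}\lesssim\|\Delta r\|_{H^{1,2}_{1/2}}$ together with Lemma \ref{lm:mbound2} for $\|m_\pm\|_{L^\infty}$ immediately yield the $\Delta\mu$ bound with the same log singularity.

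The main obstacle is the bookkeeping in the third term: the explicit form of $\mathcal{R}^{(2)}$ mixes $r(\operatorname{Re} z)$, $f(z)=r(z_0)e^{-2\chi(z_0)}\eta^{-2}\delta^2$ and $\delta^{-2}$, so one must use the decomposition
\(
\Delta(r(z_0)-f)=(\Delta r(z_0))(\delta^{-2}-e^{-2\chi(z_0)}\eta^{-2})+r(z_0)\Delta(\delta^{-2}-e^{-2\chi(z_0)}\eta^{-2})
\)
and the $H^1$-based estimates on $\beta$ (listed immediately before the lemma) to absorb every $\Delta$ into $\|\Delta r\|_{H^{1,2}_{1/2}}$ while producing at worst the stated logarithm; all remaining factors carry the smallness $|z-z_0|^{1/2}$ which is harmless pointwise. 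After this, collecting constants into $C^2_{\Delta m}(\eta)$, respectively $C^2_{\Delta\mu}(\eta)$, concludes the proof.
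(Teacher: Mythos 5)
Your proposal follows essentially the same route as the paper: the paper's proof is precisely the four-term Leibniz expansion of $\Delta m$ applied to the factorization \eqref{m:trans}, with the terms controlled by \eqref{est:Ddelta}, the explicit form of $\mathcal{R}^{(2)}$, Lemma \ref{lm:mlc}, and Lemma \ref{lm:mbound2}, exactly as you describe. Your additional step deriving the $\Delta\mu$ bound from $\mu=m_\pm(v_\theta^\pm)^{-1}$ is the natural completion of the (rather terse) argument in the paper and is consistent with it.
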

\begin{proof}
We first recall from \ref{m:trans} that 
\begin{align*}
  \Delta m(z ; x, t)&=\Delta m^{(3)}(z ; x, t) m^{\mathrm{LC}}\left(z ; z_0\right) \mathcal{R}^{(2)}(z)^{-1} \delta(z)^{\sigma_3}\\
  &\quad + m^{(3)}(z ; x, t) \Delta m^{\mathrm{LC}}\left(z ; z_0\right) \mathcal{R}^{(2)}(z)^{-1} \delta(z)^{\sigma_3}\\
  &\quad + m^{(3)}(z ; x, t)  m^{\mathrm{LC}}\left(z ; z_0\right) \Delta\mathcal{R}^{(2)}(z)^{-1} \delta(z)^{\sigma_3}\\
  &\quad + m^{(3)}(z ; x, t)  m^{\mathrm{LC}}\left(z ; z_0\right) \mathcal{R}^{(2)}(z)^{-1} \Delta\delta(z)^{\sigma_3}.
\end{align*}
 The proof is a combination of bounds given by \eqref{est:Ddelta}, the explicit form of $\mathcal{R}^{(2)}$ given by \cite[(3.3)-(3.10)]{CL}, Lemma \ref{lm:mlc} and Lemma \ref{lm:mbound2}.
\end{proof}
\subsection{the analysis of $\textrm{R}_2$ and $\textrm{R}_3$ }
For $\textrm{R}_2$, we have that 
$$
|z_0|=\sqrt{\frac{|x|}{12t}}\lesssim t^{-1 / 3} \rightarrow 0 \quad \text { as } t \rightarrow \infty,
$$
we do not need the lower/upper factorization of the jump matrix for $|z|<z_0$ as is given by \cite[(2.4)]{CL}. Instead we only deal with the following factorization:
\begin{equation}
    e^{-i \theta \operatorname{ad} \sigma_3} v(z)=\twomat{1}{-\overline{r(z)} e^{-2 i \theta}}{0}{1}\twomat{1}{0}{r(z) e^{2 i \theta}}{1}.
\end{equation}
We then carry out the following scaling:
$$
z \rightarrow \zeta t^{-1 / 3}
$$
and the factorization becomes
\begin{equation}
    \twomat{1}{-\overline{r\left(\zeta t^{-1 / 3}\right)} e^{-2 i \theta\left(\zeta t^{-1 / 3}\right)}}{0}{1}\twomat{1}{0}{r\left(\zeta t^{-1 / 3}\right) e^{2 i \theta\left(\zeta t^{-1 / 3}\right)}}{1}
\end{equation}
where
\begin{equation}
    \theta\left(\zeta t^{-1 / 3}\right)=4 \zeta^3+x \zeta t^{-1 / 3}.
\end{equation}
We then proceed to verify that along $\Sigma_1^{(\textrm{P})}\cup \Sigma_2^{(\textrm{P})}$
\begin{align}
 \operatorname{Re}\left(2 i \theta\left(\zeta t^{-1 / 3}\right)\right)&=-24 u^2 v+8 v^3-2 vxt^{-1/3}\\
 \nonumber
 &\leq-v(16u^2+2xt^{-1/3})\\
 \nonumber
 &\leq -v(16u^2-2).
\end{align}
So we can reduce the original RHP
to a problem on the contour given by Figure \ref{fig: Painleve} with the following jumps:
\begin{align}
    e^{-i \theta \operatorname{ad} \sigma_3} v^{(2)}(\zeta)&=e^{-4 i\left(\zeta^3+\left(x /\left(4 t^{1 / 3}\right)\right) \zeta\right) \text { ad } \sigma_3}\twomat{1}{0}{r(0)}{1}, \quad \zeta \in \Sigma_1^{(\mathrm{P})} \cup \Sigma_2^{(\mathrm{P})},\\
    \nonumber
    &=e^{-4 i\left(\zeta^3+\left(x /\left(4 t^{1 / 3}\right)\right) \zeta\right) \text { ad } \sigma_3}\twomat{1}{-\overline{r(0)}}{0}{1}, \quad \zeta \in \Sigma_3^{(\mathrm{P})} \cup \Sigma_4^{(\mathrm{P})}.
\end{align}
and the interpolation between $\mathbb{R}$ and $\Sigma_1^{(\mathrm{P})} \cup \Sigma_2^{(\mathrm{P})}$ is given by
$$
r\left(0\right)+\left(r\left(\operatorname{Re} \zeta t^{-1 / 3}\right)-r\left(0\right)\right) \cos 2 \phi
$$
Following the conventions in \cite[Section 7]{CL},  transformations are made to the solution $m(z; x, t)$ to Problem \ref{prob:mKdV.RH0}
\begin{equation}
\label{m:trans1}
    m(z ; x, t)=m^{(3)}(z ; x, t) m^{\mathrm{LC}}\left(z ; z_0\right) \mathcal{R}^{(2)}(z)^{-1}.
\end{equation}
Here $\mlc$ is explicitly solvable by the Painlev\'e II equation and $\mathcal{R}^{(2)}(z)$ is given by:
$$
\mathcal{R}^{(2)}(z)= \begin{cases} \twomat{1}{0}{-r\left(\zeta t^{-1 / 3}\right) e^{2 i \theta\left(\zeta t^{-1 / 3}\right)}}{1}  & \zeta \in\mathbb{R} ,\\ \twomat{1}{0}{-r\left(0\right) e^{2 i \theta\left(\zeta t^{-1 / 3}\right)}}{1} & \zeta \in \Sigma^{(\textrm{P})}_1\cup \Sigma^{(\textrm{P})}_2;\end{cases}
$$
$$
\mathcal{R}^{(2)}(z)= \begin{cases} \twomat{1}{-\overline{r\left(\zeta t^{-1 / 3}\right)} e^{-2 i \theta\left(\zeta t^{-1 / 3}\right)}}{0}{1}  & \zeta \in\mathbb{R}, \\ \twomat{1}{-\overline{r\left(0\right)} e^{-2 i \theta\left(\zeta t^{-1 / 3}\right)}}{0}{1} & \zeta \in \Sigma^{(\textrm{P})}_3\cup \Sigma^{(\textrm{P})}_4.\end{cases}
$$
Following the proof of Lemma \ref{lm:deltaM}, we can also deduce the following lemma:
\begin{lemma}
\label{lm:deltaM'}
   Given $r(z)\in  H^{1,2}_{\scriptscriptstyle 1/2}(\bbR)$ with $\|r\|_{H^{1,2}_{\scriptscriptstyle 1/2}} \leqslant \eta,\|r\|_{L^{\infty}} \leqslant \rho<1/2$ and $|x/t^{1/3}|<1$
    \begin{align}
        \left\vert \Delta m_\pm(z)\right\vert&\leq C_{\Delta m}^2(\eta) \norm{\Delta r}{H^{1,2}_{\scriptscriptstyle 1/2}},\\
        \left\vert\Delta \mu(z)\right\vert&\leq C_{\Delta \mu}^2 (\eta) \norm{\Delta r}{H^{1,2}_{\scriptscriptstyle 1/2}}.
    \end{align}  
    
\end{lemma}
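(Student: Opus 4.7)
The plan is to mirror the proof of Lemma \ref{lm:deltaM}, but now using the decomposition \eqref{m:trans1} appropriate for Region III, where $|x/t^{1/3}| \lesssim 1$ and the stationary point $z_0$ collapses to $0$. The crucial simplification compared with Lemma \ref{lm:deltaM} is that \eqref{m:trans1} does not contain the factor $\delta(z)^{\sigma_3}$, so the logarithmic singularity arising from $\Delta \delta^{-2}$ (recorded in \eqref{est:Ddelta}) is absent here. This is exactly why the right-hand side of the target estimate carries no $\ln|(z-z_0)/(z+z_0)|$ factor, in contrast with Lemma \ref{lm:deltaM}.

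First I would apply the Leibniz rule to \eqref{m:trans1} to write
\begin{align*}
\Delta m &= (\Delta m^{(3)})\, m^{\mathrm{LC}}\, (\mathcal{R}^{(2)})^{-1} + m^{(3)}\, (\Delta m^{\mathrm{LC}})\, (\mathcal{R}^{(2)})^{-1} \\
&\quad + m^{(3)}\, m^{\mathrm{LC}}\, \Delta (\mathcal{R}^{(2)})^{-1},
\end{align*}
and bound each factor in $L^\infty$. The matrix $\mathcal{R}^{(2)}$ is given explicitly in the excerpt, involving the interpolation between $r(\zeta t^{-1/3})$ on $\mathbb{R}$ and the constant $r(0)$ on $\Sigma_j^{(\mathrm{P})}$; both $\mathcal{R}^{(2)}$ and $\Delta \mathcal{R}^{(2)}$ are therefore pointwise controlled by $\|r\|_{L^\infty}$ and $\|\Delta r\|_{L^\infty} \lesssim \|\Delta r\|_{H^{1,2}_{\scriptscriptstyle 1/2}}$ respectively. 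For the Painlev\'e parametrix $m^{\mathrm{LC}}$, smooth dependence on the parameter $r(0)$ yields $\|\Delta m^{\mathrm{LC}}\|_{L^\infty} \leq c\, |\Delta r(0)| \leq c\, \|\Delta r\|_{H^1}$.

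For $\Delta m^{(3)}$, I would adapt the second-resolvent-identity argument from Lemma \ref{lm:ED}. Writing $(I - S)[m^{(3)}] = I$, one has
$$\Delta m^{(3)} = (I + S_1 + S_1^2 + \cdots)\,(S_2 - S_1)\,(I + S_2 + S_2^2 + \cdots),$$
where $S_i[f] = \tfrac{1}{\pi}\iint f\, W_i (s-z)^{-1}\, dA(s)$ and $W_i$ now lacks the $\delta^{-2}$ factor entirely. Consequently the analog of $P_1$ in \eqref{P_1} furnishes the main contribution, while the term $P_2$ in \eqref{P2}---which generated the logarithm in Region I---drops out. The key inputs are invertibility of $I - S$ by a Neumann series, which relies on the Gaussian decay $\operatorname{Re}(2i\theta(\zeta t^{-1/3})) \leq -v(16u^2 - 2)$ verified in the text, together with a Cauchy--Schwarz style estimate in the scaled variable $\zeta = z t^{1/3}$.

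The main obstacle I anticipate is the careful verification that this scaled Gaussian decay furnishes sufficient control on the relevant $L^p$ norms \emph{uniformly} on the bounded range $|x/t^{1/3}| < 1$, so that all constants remain independent of $t$ (including as $z_0 \to 0$). Once that is in hand, assembling the three pieces and using $\mu = m_\pm (v^\pm)^{-1}$, together with the fact that $(v^\pm)^{-1}$ and $\Delta (v^\pm)^{-1}$ are pointwise bounded by $\|r\|_{L^\infty}$ and $\|\Delta r\|_{H^1}$ respectively, yields the desired bounds on both $\Delta m_\pm$ and $\Delta \mu$.
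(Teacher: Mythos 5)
Your proposal follows essentially the same route as the paper, which simply invokes the proof of Lemma \ref{lm:deltaM} with the transformation \eqref{m:trans1} in place of \eqref{m:trans}: a Leibniz expansion of $\Delta m$ over the factors $m^{(3)}$, $m^{\mathrm{LC}}$, $(\mathcal{R}^{(2)})^{-1}$, with the second resolvent identity handling $\Delta m^{(3)}$ and the scaled Gaussian decay along $\Sigma^{(\mathrm{P})}$ giving the Neumann-series invertibility. Your observation that the absence of the $\delta^{\sigma_3}$ factor is precisely what removes the logarithmic singularity present in Lemma \ref{lm:deltaM} is correct and captures the only substantive difference.
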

For $\mathrm{R}_2$, we have that $x/t^{1/3}>1$. More specifically, 
\begin{align}
\operatorname{Re}\left(2 i \theta\left(\zeta z_0\right)\right) & =8 \tau\left(-3u^2 v+v^3+3 v\right) \\
\nonumber
& \leq 8 \tau v\left(-2 u^2 +3\right) 
\end{align}
So we can follow the proof of Lemma \ref{lm:deltaM'} to deduce the following lemma. 
\begin{lemma}
\label{lm:deltaM''}
   Given $r(z)\in  H^{1,2}_{\scriptscriptstyle 1/2}(\bbR)$ with $\|r\|_{H^{1,2}_{\scriptscriptstyle 1/2}} \leqslant \eta,\|r\|_{L^{\infty}} \leqslant \rho<1/2$ and $x/t^{1/3}>1$
    \begin{align}
        \left\vert \Delta m_\pm(z)\right\vert&\leq C_{\Delta m}^2(\eta) \norm{\Delta r}{H^{1,2}_{\scriptscriptstyle 1/2}},\\
        \left\vert\Delta \mu(z)\right\vert&\leq C_{\Delta \mu}^2 (\eta) \norm{\Delta r}{H^{1,2}_{\scriptscriptstyle 1/2}}.
    \end{align}  
\end{lemma}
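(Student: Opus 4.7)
The plan is to mirror the structure of Lemma \ref{lm:deltaM'}, using the same factorization chain
\begin{equation*}
m(z;x,t) = m^{(3)}(z;x,t)\, m^{\mathrm{LC}}(z;z_0)\, \mathcal{R}^{(2)}(z)^{-1}
\end{equation*}
but with the contour geometry and local parametrix appropriate to Regions IV--V. Writing $\Delta m = (\Delta m^{(3)})\, m^{\mathrm{LC}}_2 (\mathcal{R}^{(2)}_2)^{-1} + m^{(3)}_1 (\Delta m^{\mathrm{LC}}) (\mathcal{R}^{(2)}_2)^{-1} + m^{(3)}_1 m^{\mathrm{LC}}_1 \Delta(\mathcal{R}^{(2)})^{-1}$, the estimate reduces to bounding each of the three pieces in $L^\infty$ by a constant multiple of $\norm{\Delta r}{H^{1,2}_{\scriptscriptstyle 1/2}}$, uniformly in $z$, and then combining with the $L^\infty$ bounds for $m^{(3)}$, $m^{\mathrm{LC}}$, and $\mathcal{R}^{(2)}$ from Proposition \ref{prop:mbound1}, Lemma \ref{lm:mlc}, and Lemma \ref{lm:mbound2}.

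First I would handle $\Delta \mathcal{R}^{(2)}$: the triangular interpolant $\mathcal{R}^{(2)}$ built from $r(\zeta z_0)$ on $\mathbb{R}$ and $r(\pm z_0)$ on the deformed rays is linear in $r$ up to the exponential weight $e^{\pm 2i\theta}$, and the phase estimate $\operatorname{Re}\left(2i\theta(\zeta z_0)\right) \leq 8\tau v(-2u^2+3)$ established just above the lemma provides the Gaussian-type decay that makes the difference integrable against $\Delta r$. Since here $\tau \geq M^{-1}$ is bounded away from zero, there is no $\log$-type singularity in the dependence on $z\mp z_0$, in contrast with Lemma \ref{lm:deltaM}.

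Next I would bound $\Delta m^{\mathrm{LC}}$. In Region IV the local parametrix is the Painlev\'e II model determined by $r(t)(0)$, and the argument mirrors the proof of Lemma \ref{lm:mlc}: apply the second resolvent identity to the associated small-norm RHP away from the model region, and use explicit $\zeta$-dependence of the Painlev\'e parametrix near the origin; both pieces depend Lipschitz-continuously on $r$ with constants controlled by $\norm{r_i}{H^{1,2}}\leq\eta$ and $\norm{r_i}{L^\infty}\leq\rho$. In Region V the jump is already small-norm due to the exponential smallness from $\tau\gg 1$, so $m^{\mathrm{LC}}\equiv I$ may be taken and $\Delta m^{\mathrm{LC}}=0$. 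Across the transition $\tau\sim M$, both constructions match.

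Finally, $\Delta m^{(3)}$ is controlled by applying the second resolvent identity to the $\bar\partial$-equation $(I-S)[m^{(3)}]=I$ just as in Lemma \ref{lm:ED}, writing $\Delta m^{(3)} = (I+S_1+\cdots)(S_2-S_1)(I+S_2+\cdots)$. The Gaussian decay from $e^{8\tau v(-2u^2+3)}$ replaces the $e^{-z_0 t uw}$ used in the proof of Lemma \ref{lm:ED}, and after rescaling $v\mapsto v/\sqrt{\tau}$ yields a uniform $\tau^{-1/4}$-type factor that can be absorbed into the constant since $\tau \geq M^{-1}$. The main obstacle will be verifying that the $\bar\partial$-derivative of the interpolant $\mathcal{R}^{(2)}$ still satisfies the analogue of \eqref{est:dR} in this regime without generating the logarithmic singularity that plagued Region I; this is where the separation of the two stationary points matters, and since $\pm z_0$ are both located inside the disks handled by the Painlev\'e parametrix (Region IV) or are irrelevant due to smallness (Region V), the $\log|(z-z_0)/(z+z_0)|$ factor of Lemma \ref{lm:deltaM} does not appear, yielding the cleaner bound $|\Delta m_\pm|,|\Delta \mu|\leq C^2_{\Delta m}(\eta)\,\norm{\Delta r}{H^{1,2}_{\scriptscriptstyle 1/2}}$ as stated.
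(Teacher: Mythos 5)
Your proposal follows essentially the same route as the paper: the paper's proof of this lemma is simply to repeat the argument of Lemma \ref{lm:deltaM'} (itself modeled on Lemma \ref{lm:deltaM}), namely the telescoping decomposition of $\Delta m$ through the chain $m=m^{(3)}m^{\mathrm{LC}}(\mathcal{R}^{(2)})^{-1}$, the second–resolvent-identity bound on $\Delta m^{(3)}$ as in Lemma \ref{lm:ED}, the Lipschitz dependence of the Painlev\'e parametrix and of $\mathcal{R}^{(2)}$ on $r$, and the observation that the exponential decay of $\operatorname{Re}(2i\theta)$ on the deformed rays removes the logarithmic singularity present in Region I. Your write-up is a faithful (and somewhat more detailed) elaboration of exactly that argument, so no further comparison is needed.
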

For $\mathrm{R}_3$, we have that $x/t^{1/3}>1$. More specifically, 
\begin{align}
 \operatorname{Re}\left(2 i \theta\left(\zeta t^{-1 / 3}\right)\right)&=-24 u^2 v+8 v^3-2 vxt^{-1/3}\\
 \nonumber
 &\leq-v(16u^2+2xt^{-1/3})\\
 \nonumber
 &\leq -v(16u^2+2).
\end{align}
So we can follow the proof of Lemma \ref{lm:deltaM'} to deduce the following lemma. 
\begin{lemma}
\label{lm:deltaM'''}
   Given $r(z)\in  H^{1,2}_{\scriptscriptstyle 1/2}(\bbR)$ with $\|r\|_{H^{1,2}_{\scriptscriptstyle 1/2}} \leqslant \eta,\|r\|_{L^{\infty}} \leqslant \rho<1/2$ and $x/t^{1/3}>1$
    \begin{align}
        \left\vert \Delta m_\pm(z)\right\vert&\leq C_{\Delta m}^2(\eta) \norm{\Delta r}{H^{1,2}_{\scriptscriptstyle 1/2}},\\
        \left\vert\Delta \mu(z)\right\vert&\leq C_{\Delta \mu}^2 (\eta) \norm{\Delta r}{H^{1,2}_{\scriptscriptstyle 1/2}}.
    \end{align}  
    
\end{lemma}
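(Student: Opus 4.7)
The plan is to mimic the proof of Lemma \ref{lm:deltaM'} (and its intermediate analogue Lemma \ref{lm:deltaM''}), exploiting the fact that in $\mathrm{R}_3$ with $x/t^{1/3}>1$ the phase function has no real stationary points: $\theta'(\zeta t^{-1/3}) = 12\zeta^2 t^{-2/3}+xt^{-1/3}$ stays strictly positive on $\mathbb{R}$, and the given estimate $\operatorname{Re}(2i\theta(\zeta t^{-1/3}))\leq -v(16u^2+2)$ shows that after opening the lenses one has uniform exponential decay off the real axis. In particular, no local parametrix at a stationary point is required, which is exactly why, in contrast to Lemma \ref{lm:deltaM}, no logarithmic factor $\ln|(z-z_0)/(z+z_0)|$ appears in the final bound.

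First I would factor the solution as $m(z;x,t)=m^{(3)}(z;x,t)\,m^{\mathrm{LC}}(z;z_0)\,\mathcal{R}^{(2)}(z)^{-1}$, taking the same $\mathcal{R}^{(2)}$ defined in the analysis of $\mathrm{R}_3$ following Lemma \ref{lm:deltaM'} (interpolating between $r(z)$ on $\mathbb{R}$ and $r(0)$ on the lens contours $\Sigma^{(\mathrm{P})}_j$), and with $m^{\mathrm{LC}}$ the corresponding parametrix. Applying the difference operator $\Delta$ by Leibniz I would split
\begin{align*}
\Delta m &= (\Delta m^{(3)})\,m^{\mathrm{LC}}\,(\mathcal{R}^{(2)})^{-1} + m^{(3)}\,(\Delta m^{\mathrm{LC}})\,(\mathcal{R}^{(2)})^{-1} \\
&\quad + m^{(3)}\,m^{\mathrm{LC}}\,\Delta (\mathcal{R}^{(2)})^{-1},
\end{align*}
and then estimate each of the three pieces pointwise in $z$, invoking Lemma \ref{lm:mlc} for the uniform $L^\infty$ control of $m^{\mathrm{LC}}$ and $\Delta m^{\mathrm{LC}}$ (here the $\log$ term from \eqref{dmlc} is harmless because it is absorbed away from $\pm z_0$ in the present region), Proposition \ref{prop:mbound1} and Lemma \ref{lm:mbound2} for $\|m^{(3)}\|_{L^\infty}$, and Lemma \ref{lm:ED} for $\|\Delta m^{(3)}\|_{L^\infty}$.

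The bound on $\Delta \mathcal{R}^{(2)}$ reduces to bounding $\Delta(r(\zeta t^{-1/3})-r(0))\cos 2\phi$ and $\Delta r(0)$ times the exponential $e^{2i\theta(\zeta t^{-1/3})}$; the phase estimate $\operatorname{Re}(2i\theta)\leq -v(16u^2+2)$ produces integrable Gaussian-type decay in $v$ along each ray $\Sigma^{(\mathrm{P})}_j$, so the $\bar\partial$-operator $S$ associated to $\mathcal{R}^{(2)}$ (and its difference) is uniformly bounded in the manner of Lemma \ref{lm:ED}. Combining these, every summand in $\Delta m$ is controlled by a constant multiple of $\|\Delta r\|_{H^{1,2}_{\scriptscriptstyle 1/2}}$, with the constant depending only on $\eta$, giving the stated bound for $\Delta m_\pm$; the bound for $\Delta \mu$ follows from $\mu=m_\pm (v^\pm_\theta)^{-1}$ and the $L^\infty$ control of the jump factors.

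The main obstacle I foresee is purely bookkeeping: one must check carefully that the \emph{difference} of the $\bar\partial$-problems satisfies the same small-norm estimate that gave Proposition \ref{prop:mbound1}, since the resolvent $(I-S_i)^{-1}$ now depends on $r_i$. The second resolvent identity handles this, but one has to verify that the phase decay $e^{-v(16u^2+2)}$ is strong enough to close the estimate uniformly in $x,t$ (for $x/t^{1/3}>1$), which is where the non-stationary hypothesis is essential; away from it, one would be forced to reintroduce the logarithmic loss as in Lemma \ref{lm:deltaM}.
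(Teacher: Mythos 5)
Your proposal follows essentially the same route as the paper: the paper's own proof of Lemma \ref{lm:deltaM'''} is simply the phase estimate $\operatorname{Re}(2i\theta(\zeta t^{-1/3}))\leq -v(16u^2+2)$ followed by the instruction to repeat the argument of Lemma \ref{lm:deltaM'}, which in turn is the Leibniz-rule decomposition of $\Delta m$ through the factorization $m=m^{(3)}m^{\mathrm{LC}}(\mathcal{R}^{(2)})^{-1}$ together with the bounds on $\Delta m^{(3)}$ (via the second resolvent identity, Lemma \ref{lm:ED}), $\Delta m^{\mathrm{LC}}$, and $\Delta\mathcal{R}^{(2)}$ — exactly the pieces you assemble. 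Your explanation of why no logarithmic loss appears (absence of real stationary points, hence no local parametrix singularity) correctly identifies the point the paper leaves implicit.
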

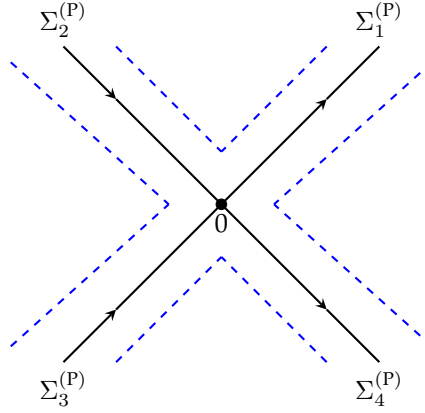
\begin{figure}[H]
\caption{$\Sigma$-Painlev\'e}
\vskip 15pt
\begin{tikzpicture}[scale=0.7]
\draw[thick]		(3, 3) -- (2,2);						
\draw[->,thick,>=stealth] 		(0,0) -- (2,2);		
\draw[thick] 			(0,0) -- (-2,2); 				
\draw[->,thick,>=stealth]  	(-3,3) -- (-2,2);	
\draw[->,thick,>=stealth]		(-3,-3) -- (-2,-2);							
\draw[thick]						(-2,-2) -- (0,0);
\draw[thick,->,>=stealth]		(0,0) -- (2,-2);								
\draw[thick]						(2,-2) -- (3,-3);
\draw	[fill]						(0,0) circle[radius=0.1];	
\node [below] at (0,0) {0};

\draw [dashed, thick, blue] (2,3)--(0,1);
\draw [dashed, thick, blue] (-2,3)--(0,1);
\draw [dashed, thick, blue] (2,-3)--(0,-1);
\draw [dashed, thick, blue] (-2,-3)--(0,-1);

\draw [dashed, thick, blue] (-4, 2.7)--(-1,0);
\draw [dashed, thick, blue] (-4,-2.7)--(-1,0);
\draw [dashed, thick, blue] (4, 2.7)--(1,0);
\draw [dashed, thick, blue] (4,-2.7)--(1,0);

\node[above] at (3,3)					{$\Sigma^{(\text{P})}_1$};
\node[above] at (-3,3)					{$\Sigma^{(\text{P})}_2$};
\node[below] at (-3,-3)					{$\Sigma^{(\text{P})}_3$};
\node[below] at (3,-3)				{$\Sigma^{(\text{P})}_4$};
\end{tikzpicture}
\label{fig: Painleve}
\end{figure}
As a consequence, we obtain the following proposition which is used in the proof of Theorem \ref{thm:r}.
\begin{proposition}
\label{prop:new apriori}
  Suppose $r \in H_{\scriptscriptstyle 1/2}^{1,2},\|r\|_{H_{\scriptscriptstyle 1/2}^{1,1}} \leqslant \eta,\|r\|_{L^{\infty}} \leqslant \rho<1/2$. Then for any $x, t \in \mathbb{R}$, and $2<p<\infty,\left(1-C_{v_\theta}\right)^{-1}$  exists as bounded operators in $L^p(\mathbb{R})$ and satisfy the bounds

$$
\left\|\left(1-C_{v_\theta}\right)^{-1}\right\|_{L^p \rightarrow L^p} \leqslant\mathcal{K}_p,
$$
where
\begin{equation}
\label{Kp}
\mathcal{K}_p(\rho, \eta)=(1+\rho)\left(\mathbf{c}_p \left(1+\rho\right)M_{\infty}^2+1\right). 
\end{equation}
The constant $\mathbf{c}_p$ is given by the $L^p$-boundedness of the Cauchy projection.
\end{proposition}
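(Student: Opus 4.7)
The plan is to derive an explicit resolvent formula for $(1-C_{v_\theta})^{-1}$ in terms of the Beals-Coifman matrix $\mu$ and boundary values $m_\pm$, and then read off the $L^p$ bound from the $L^p$-boundedness of the Cauchy projection. Given $f\in L^p\cap L^2$, let $u\in L^2$ be the solution of $(1-C_{v_\theta})u=f$ furnished by the basic $L^2$-resolvent bound $(1-\rho)^{-1}$. Using the Sokhotski identity $C^+-C^-=I$ on $\mathbb{R}$, rewrite the equation in the equivalent pair of forms
\begin{equation*}
u\,v^\pm = f + C^\pm\bigl(u(w^++w^-)\bigr),
\end{equation*}
where $v^\pm$ are the upper/lower factors in $v_\theta=(v^-)^{-1}v^+$.

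Introduce the auxiliary sectionally analytic function $F(z):=C\bigl(u(w^++w^-)\bigr)(z)$ for $z\notin\mathbb{R}$, with boundary values $F_\pm=C^\pm(u(w^++w^-))$, so the two relations above become $F_\pm = u\,v^\pm - f$. Let $m$ denote the RHP solution with $m_\pm=\mu v^\pm$, $m(z)\to I$ at infinity, and $\det m \equiv 1$ (from the symmetry $|a|^2-|b|^2=1$). The function $G(z):=F(z)m(z)^{-1}$ is then analytic off $\mathbb{R}$, vanishes at infinity, and has boundary values
\begin{equation*}
G_\pm = (u\,v^\pm - f)m_\pm^{-1} = u\,\mu^{-1} - f\,m_\pm^{-1},
\end{equation*}
so its jump across $\mathbb{R}$ is $G_+ - G_- = f(m_-^{-1}-m_+^{-1})$. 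Sokhotski-Plemelj then identifies $G = C\bigl(f(m_-^{-1}-m_+^{-1})\bigr)$, and substituting $F_\pm = C^\pm\bigl(f(m_-^{-1}-m_+^{-1})\bigr)m_\pm$ back, together with $m_\pm(v^\pm)^{-1}=\mu$, yields the closed form
\begin{equation*}
u = f(v^\pm)^{-1} + C^\pm\bigl(f(m_-^{-1}-m_+^{-1})\bigr)\,\mu.
\end{equation*}

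From this representation the estimate is routine. One has $\|(v^\pm)^{-1}\|_\infty\leq 1+\rho$, and $\det\mu\equiv 1$ gives $\|\mu^{-1}\|_\infty\leq M_\infty$ and hence $\|m_\pm^{-1}\|_\infty\leq(1+\rho)M_\infty$. The identity $m_-^{-1}-m_+^{-1}=(I-v^{-1})m_-^{-1}$ combined with $\|v^{-1}\|_\infty\lesssim 1+\rho$ gives the sharper estimate $\|m_-^{-1}-m_+^{-1}\|_\infty\leq(1+\rho)M_\infty$. Combining with the Riesz inequality $\|C^\pm\|_{L^p\to L^p}\leq\mathbf{c}_p$ produces
\begin{equation*}
\|u\|_{L^p}\leq (1+\rho)\|f\|_{L^p}+\mathbf{c}_p(1+\rho)M_\infty^2\|f\|_{L^p}=\mathcal{K}_p(\rho,\eta)\|f\|_{L^p}.
\end{equation*}
The inverse is then extended to all of $L^p$ by density of $L^p\cap L^2$, with uniqueness automatic from the explicit formula.

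The main technical obstacle is justifying Sokhotski-Plemelj for $G = Fm^{-1}$: one must verify that $G$ has sufficient decay at infinity and lies in a Hardy-type space on each half-plane, so that its identification with the Cauchy transform of its jump is unambiguous. This reduces to standard integrability considerations built on the $L^2$-theory for $u$, the $L^\infty$ bounds on $\mu$ and $m_\pm$ from Proposition~\ref{prop:mbound1} and Lemma~\ref{lm:mbound2} (with their analogues for Regions II--V through the $\overline\partial$-analysis of Section~\ref{sec:dbar}), and the fact that $m^{-1}-I$ inherits the integrability of $m-I$ via $\det m\equiv 1$.
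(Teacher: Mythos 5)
Your proposal is correct and follows essentially the same route as the paper: the key object in both is the resolvent identity $\left(I-C_{v_\theta}\right)^{-1}g=\left(C_{+}\left(g(v_\theta-I)m_{+}^{-1}\right)\right)m_{+}v_{+}^{-1}+gv_{+}^{-1}$ (your formula $u=f(v^{\pm})^{-1}+C^{\pm}\bigl(f(m_-^{-1}-m_+^{-1})\bigr)\mu$ is the same identity, since $(v_\theta-I)m_+^{-1}=m_-^{-1}-m_+^{-1}$ and $m_+v_+^{-1}=\mu$), combined with the $L^\infty$ bounds on $m_\pm$, $\mu$ and the $L^p$-boundedness of $C^\pm$, yielding the same constant $\mathcal{K}_p$. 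The only difference is that you derive the identity from scratch via a Sokhotski--Plemelj argument, whereas the paper simply cites it from Deift's lecture notes and cites Deift--Zhou for the $L^p$ existence, so your Hardy-space justification plays the role of those citations.
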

\begin{proof}
Assuming that $m_\pm \in L^\infty$, then by \cite[Proposition 4.5, Proposition 2.6]{DZ-3}, the operator $ \left(I-C_{v_\theta}\right)^{-1} $ exists on $L^p$, and we also recall the following fact from \cite[(7.107)]{Deift}:
    \begin{equation}
        \left(I-C_{v_\theta}\right)^{-1} g=\left(C_{+}\left(g(v_\theta-I) m_{+}^{-1}\right)\right) m_{+} v_{+}^{-1}+g v_{+}^{-1}.
    \end{equation}
    For any $g\in L^p(\bbR)$, from
    \begin{align*}
        \norm{ \left(I-C_{v_\theta}\right)^{-1} g}{L^p}&\leq \norm{\left(C_{+}\left(g(v_\theta-I) m_{+}^{-1}\right)\right) m_{+} v_{+}^{-1}}{L^p}+\norm{g v_{+}^{-1}}{L^p}\\
        &\leq \norm{\left(C_{+}\left(g(v_\theta-I) m_{+}^{-1}\right)\right) }{L^p}\norm{m_{+} v_{+}^{-1}}{L^\infty}+\norm{g v_{+}^{-1}}{L^p}\\
        &\leq \mathbf{c}_p \norm{g}{L^p}\norm{(v_\theta-I) m_{+}^{-1}}{L^\infty} \norm{m_{+} v_{+}^{-1}}{L^\infty}+\norm{g v_{+}^{-1}}{L^p},
    \end{align*}
    we deduce that
    \begin{align}
        \frac{\norm{ \left(I-C_{v_\theta}\right)^{-1} g}{L^p}}{\norm{g}{L^p}}&\leq \mathbf{c}_p \norm{(v_\theta-I) m_{+}^{-1}}{L^\infty} \norm{m_{+} v_{+}^{-1}}{L^\infty}+\norm{v_{+}^{-1}}{L^\infty}\\
        \nonumber
        &\leq \mathbf{c}_p \norm{v_\theta-I}{L^\infty}\norm{v_+}{L^\infty}\norm{m_+}{L^\infty}^2+\norm{v_{+}^{-1}}{L^\infty}\\
        \nonumber
        &\leq (1+\rho)\left(\mathbf{c}_p\norm{m_+}{L^\infty}^2\left(1+\rho\right)+1\right).
    \end{align}
    The conclusion follows if we replace $\norm{m_+}{L^\infty}$ by 
    \begin{equation}
    \label{Minfty}
        M_{\infty}(\rho,\eta)=\text{max}\lbrace  C_{T m}(\eta), C_m(\rho, \eta)+1\rbrace
    \end{equation}
   where $C_m(\rho, \eta)$ and $C_{T m}(\eta)$ are given in Proposition \ref{prop:mbound1} and Lemma \ref{lm:mbound2} respectively.
\end{proof}

\subsection{Supplementary estimates}
\label{sec:suppesti}
In this subsection we give some other the necessary estimates needed to obtain \eqref{est:F}-\eqref{est:DF}. Throughout the section we assume that $r(z)\in H^{1,2}_1$ with $\|r\|_{H^{1,2}} \leqslant \eta$, $\|r\|_{L^{\infty}} \leqslant \rho<\frac{1}{2}$. We will make use of the  uniform $L^\infty$ bound of $m_\pm$ and $\mu$ obtained from the previous section. 
We first recall from \cite[Lemma 5.24]{DZ-2} that 
\begin{align}
    \|u\|_{L^2} &\leqslant \frac{1}{\sqrt{2 \pi}} \frac{\|r\|_{L^2}}{(1-\varrho)^{1/2}},
\\
    \left\| u_x\right\|_{L^2} &\leqslant \frac{1}{\sqrt{2 \pi}} \frac{\|\diamond r\|_{L^2}}{(1-\varrho)^{1 / 2}}.
\end{align}
For  $u_{xx}$, we recall from \cite[(2.26)]{Zhou98} that 

\begin{equation}
\label{L2 uxx}
u_{xx}=\frac{\operatorname{ad} \sigma}{2 \pi} \int\left( \sum_{j=0}^{1} c_j z^j \mu\left(w^-_{\theta}+w^+_{\theta}\right) +(i \operatorname{ad} \sigma)^2 z^2 \mu\left(w^-_{\theta}+w^+_{\theta}\right)\right)
\end{equation}

where $c_j$ are polynomials of $u$ and $u_x$. So we conclude that 
\begin{equation}
    \norm{ u_{xx}}{L^2} \leq c\left(\norm{u}{H^1}+\norm{\mu}{L^\infty}\norm{r}{L^{2,2}}\right).
\end{equation}
Due to the singularity $\ln|(z-z_0)/(z+z_0)|$ besides Lemma \ref{lm:deltaM}, we need another bound on $\Delta m_\pm$. We proceed to differentiate the jump relation $m_+=m_-v_\theta$, we deduce
\begin{equation}
    \partial_z m_{+}-\partial_z m_{-}=\left(\partial_z m_{-}\right) (v_\theta-I)+m_{-}\left(-i(x+12 t z^2) \operatorname{ad} v_\theta+\left(\partial_z v\right)_\theta\right).
\end{equation}
Setting $\mathcal{C}_{v_\theta} f= C^-f(v_\theta-I)$, we obtain 
\begin{equation}
   \partial_z m_{-}=\left(I-\mathcal{C}_{v_\theta}\right)^{-1}C_\bbR^-\left[ m_{-}\left(-i(x-12 t z^2) \operatorname{ad} v_\theta+\left(\partial_z v\right)_\theta\right) \right]. 
\end{equation}
It is easy to deduce that 
\begin{align*}
    \norm{\partial_z m_\pm}{L^2(dz)} &\leq  \norm{\left(I-\mathcal{C}_{v_\theta}\right)^{-1}}{L^2}\norm{ m_-}{L^\infty(dz)} \left(\left\vert x \right\vert  \norm{r}{L^{2}} +\left\vert 12t \right\vert   \norm{r}{L^{2,2}}  +\norm{r'}{L^{2}}\right),
\end{align*}
and consequently, 
\begin{align}
    \norm{m_\pm}{L^\infty(dz)} &\leq \norm{m_\pm}{H^1(dz)}\\
    \nonumber
    &\leq \norm{m_\pm}{L^2(dz)}+ \norm{\left(I-\mathcal{C}_{v_\theta}\right)^{-1}}{L^2}\norm{ m_-}{L^\infty(dz)} \left(\left\vert x \right\vert  \norm{r}{L^{2}} +\left\vert 12t \right\vert   \norm{r}{L^{2,2}}  +\norm{r'}{L^{2}}\right).
\end{align}
Moreover,  we have that 
\begin{align}
    \norm{\Delta m_\pm}{L^\infty(dz)}&\leq \norm{\Delta m_\pm}{L^2(dz)}+ \norm{\partial_z \Delta m_\pm}{L^2(dz)} \\
    \nonumber
    &\leq  \norm{\Delta (1-\mathcal{C}_{v_\theta})^{-1}}{L^2}\norm{ m_-}{L^\infty(dz)}\left(\left\vert x \right\vert  \norm{r}{L^{2}} +\left\vert 12t \right\vert   \norm{r}{L^{2,2}}  +\norm{r'}{L^{2}}\right)\\
    \nonumber
    &\quad + \norm{ (1-\mathcal{C}_{v_\theta})^{-1}}{L^2}\left(\left\vert x \right\vert  \norm{r \Delta  m_-}{L^{2}} +\left\vert 12t \right\vert   \norm{r \Delta  m_-}{L^{2,2}}  +\norm{r' \Delta  m_-}{L^{2}}\right)\\
    \nonumber
    &\quad +  \norm{ (1-\mathcal{C}_{v_\theta})^{-1}}{L^2}\norm{ m_-}{L^\infty(dz)}\left(\left\vert x \right\vert  \norm{\Delta r}{L^{2}} +\left\vert 12t \right\vert   \norm{r}{L^{2,2}}  +\norm{r'}{L^{2}}\right)\\
    \nonumber
    &\quad + \norm{ (1-\mathcal{C}_{v_\theta})^{-1}}{L^2}\norm{ m_-}{L^\infty(dz)}\left(\left\vert x \right\vert  \norm{ r}{L^{2}} +\left\vert 12t \right\vert   \norm{\Delta r}{L^{2,2}}  +\norm{r'}{L^{2}}\right)\\
    \nonumber
    &\quad + \norm{ (1-\mathcal{C}_{v_\theta})^{-1}}{L^2}\norm{ m_-}{L^\infty(dz)}\left(\left\vert x \right\vert  \norm{ r}{L^{2}} +\left\vert 12t \right\vert   \norm{ r}{L^{2,2}}  +\norm{\Delta r'}{L^{2}}\right)\\
    \nonumber
    & \leq  F(x,t,\eta)\norm{\Delta r}{H^{1,2}} + \norm{ (1-\mathcal{C}_{v_\theta})^{-1}}{L^2} \norm{  r' \Delta m_-}{L^{2}}\\
    \nonumber
    &\leq F(x,t,\eta)\norm{\Delta r}{H^{1,2}} + (1-\rho)^{-1} \norm{\Delta m_-}{L^\infty(dz)} \norm{  r'}{L^{2}}.
\end{align}
We can obtain the following bound as long as $(1-\rho)^{-1} \norm{  r'}{L^{2}}<1$:
\begin{equation}
\label{est:dm}
     \norm{\Delta m_\pm}{L^\infty(dz)} \leq \dfrac{z_0^2 F(x,t,\eta)\norm{\Delta r}{H^{1,2}} }{1-(1-\rho)^{-1}\norm{  r'}{L^{2}}}
\end{equation}
and it is easy to see that $(1-\rho)^{-1} \norm{  r'}{L^{2}}<1$ if we have $\norm{r}{H^1}<1/2$. We also point out that  $F(x,t,\eta)$ grows linearly in $x$ and $t$.

\section*{Acknowledgment}
The first author was partially supported by NSF grant DMS-2350301 and by Simons foundation MP-TSM-
00002258. The second author is partially supported by NSFC Grant No.12201605.

\bibliographystyle{amsplain}

\end{document}